\let\todon\todo
\renewcommand\todo[1]{\todon{\color{red}#1}}
\newtheorem{theorem}{Theorem}[section]
\theoremstyle{plain}
\newtheorem{corollary}[theorem]{Corollary}
\newtheorem{example}[theorem]{Example}
\newtheorem{lemma}[theorem]{Lemma}
\newtheorem{remark}[theorem]{Remark}
\theoremstyle{definition} 
\newtheorem{definition}[theorem]{Definition}
\newtheorem*{tata}{Generalization}
  {\begin{mdframed}[backgroundcolor=lightgray]\begin{tata}}%
  {\end{tata}\end{mdframed}}
\title{Free generators and Hoffman's isomorphism for the two-parameter shuffle algebra}
\author{Leonard Schmitz\thanks{University of Greifswald} \and Nikolas Tapia\thanks{Weierstrass Institute}}
\date{\today}
\newcommand{\N}{\mathbb{N}}
\newcommand{\Q}{\mathbb{Q}}
\newcommand\diag{\operatorname{diag}}
\newcommand{\qShuffle}{\stackrel{\scalebox{0.6}{$\mathsf{qs}$}}{\shuffle}}
\newcommand{\blockShuffle}{\mathbin{\overset{\scalebox{0.8}{$\,\square$}}{\shuffle}}}
\newcommand\composition{\operatorname{Comp}}
\newcommand\groundRing{\mathbb{K}}
\newcommand\monoidComp{\mathfrak{M}}
\newcommand\ec{\mathsf{e}} 
\newcommand\sh{\operatorname{sh}}
\newcommand\qSh{\operatorname{qsh}}
\newcommand\SH{\operatorname{SH}}
\newcommand\QSH{\operatorname{QSH}}
\newcommand\id{\operatorname{id}}
\renewcommand\SS{\operatorname{SS}}
\newcommand\End{\operatorname{End}}
\newcommand\Sym{\operatorname{Sym}}
\newcommand\vectorize{\operatorname{vec}}
\newcommand\lm{\operatorname{lm}}
\newcommand\evaluate{\operatorname{eval}}
\newcommand\e[1]{{e}_{#1}} 
\newcommand\w[1]{{\color{cyan}\mathbf{#1}}}
\newcommand\DEF[1]{\textbf{#1}\index[general]{#1}}
\newcommand\compositionConnected{\composition_{\mathsf{con}}}
\newcommand\rows{\mathsf{rows}}
\newcommand\cols{\mathsf{cols}}
\newcommand\size{\mathsf{size}}
\newcommand\brew{\mathsf{fuse}}
\newcommand\bSH{\mathsf{bSH}}
\newcommand{\Ima}{\operatorname{Im}}
\newcommand\len{\operatorname{len}}
\newcommand\lex{\mathsf{lex}}
\newcommand\grlex{\mathsf{grlex}}
\DeclareMathOperator{\Log}{Log}
\DeclareMathOperator{\Exp}{Exp}
\begin{document}

\maketitle

\abstract{Signature transforms have recently been extended to data indexed by two and more parameters. 
With free Lyndon generators, ideas from $\mathbf{B}_\infty$-algebras and a novel two-parameter Hoffman exponential, we provide three classes of isomorphisms between the underlying two-parameter shuffle and quasi-shuffle algebras. In particular, we provide a Hopf algebraic connection to the (classical, one-parameter) shuffle algebra over the extended alphabet of connected matrix compositions.}
\\\\
\textbf{Keywords:} Signatures for images, matrix compositions, Hopf algebras, Lyndon words, two-parameter shuffles and quasi-shuffles

\tableofcontents

\section{Introduction}
The collection of iterated integrals of a path (termed \emph{path signature}) has been used as a tool for classification in various settings since Chen's seminal 1954 work \cite{chen54}.
In recent years---and in particular since pioneering work by Lyons and collaborators on the use of signatures as features for data streams in the early 2010s \cite{Gyu13,handw13}---there has been a surge in the number of applications, which range from calibration of financial models \cite{sigsde} to motion recognition \cite{Yang2022,CLT2019} and deep learning \cite{deepsig}.
We refer the interested reader to the surveys by Chevyrev and Kormilitzin \cite{ChevK16}, and Lyons and McLeod \cite{LyoMcLeod22}.
A discrete counterpart of the path signature has been introduced by one of the coauthors and collaborators \cite{dielh2020sig,diehl2020tropical}, and is likewise starting to see some applications \cite{diehl2023fruits}.

Signature transforms, both continuous and discrete, have been extended to data indexed by two and more parameters by either following a topological approach \cite{GLNO2022}, or an algebraic approach \cite{DS23}.
The algebraic picture in this setting becomes considerably more involved, and two-parameter analogues of the shuffle \cite{GLNO2022} and quasi-shuffle algebras \cite{DS23} enter the frame.

It is a well-known fact that the set of Lyndon words provides an algebraic independent generating set of the (classical, one-parameter) shuffle and quasi-shuffle algebra, e.g. \cite{RADFORD1979432}, \cite[Thm.~2.6]{H99} or \cite[Prop.~6.4.14]{grinberg2020hopf}.
The Hopf algebra structure from \cite[Thm.~2.12]{DS23} guarantees that the two-parameter quasi-shuffle algebra of matrix compositions is free. 
In this article, we address the following two questions posed in \cite[Sec.~6]{DS23}:
\begin{enumerate}
    \item \emph{Is there a free generating set akin to Lyndon words?}
    \item \emph{How are two-parameter shuffles and quasi-shuffles related to each other?}
\end{enumerate}

We provide full answers to both questions:  in  \Cref{thm:iso} we construct an explicit family ${(\mathcal{B}_w)}_{w\in\mathfrak{L}}$ of free generators indexed by Lyndon words, and in \Cref{thm_iso_sh_qsh} we introduce a two-parameter version of Hoffman's isomorphism together with its explicit inverse, verifying that the two-parameter shuffle and quasi-shuffle are isomorphic products in the matrix composition Hopf algebra. 

Note that  ${(\mathcal{B}_w)}_{w\in\mathfrak{L}}$ simultaneously generates not only the two-parameter shuffle and quasi-shuffle algebra, but even the (classical, one-parameter) shuffle algebra over the extended alphabet of connected matrix compositions. 
With this we relate the (classical,  one-parameter) framework \cite{H99} to the recent two-parameter development \cite{GLNO2022,DS23,ZLT22}. 
Using ideas from the theory of \(\mathbf{B}_\infty\)-algebras,  we can extend this connection even to the level of Hopf algebras. 

In \Cref{subsec:applic} we conclude with a fruitful application of our suggested machinery: a novel and transparent proof of the bialgebra relation with respect to diagonal deconcatenation and the two-parameter quasi-shuffle product, first stated  in \cite[Thm.~2.12]{DS23}. 
In particular, we can drastically simplify its original proof.

\subsection*{Notation}
Throughout, let $\Q$ denote the rational number field, 
$\N = \{1,2,\dots\}$ the strictly positive integers, and 
$\N_0 = \{0\}\cup\N$ the non-negative integers. 
\index[general]{Q@$\Q$}
\index[general]{N@$\N$}
\index[general]{N0@$\N_0$}
For every matrix $\mathbf{a}\in M^{m\times n}$ with entries from an arbitrary set $M$, we denote by $\size(\mathbf{a}):=(\rows(\mathbf{a}),\cols(\mathbf{a})):=(m,n)\in \N^2$ the number of its rows and columns, respectively. 
\index[general]{size@$\size$}
\index[general]{rows@$\rows$}
\index[general]{cols@$\cols$}
\index[general]{K@$\groundRing$}

Let $\groundRing$ be a commutative $\Q$-algebra and $(\mathfrak{M}_d,\star,\varepsilon)$ be the free commutative monoid  generated by $d$ letters $\{\w{1},\dots,\w{d}\}$. 
Here, $\star$ denotes the multiplication and $\varepsilon$ the neutral element with respect to the latter.  
From \cite[Sec.~2]{DS23} we recall the direct sum, 
 \[H:=\bigoplus_{\mathbf{a}\in\composition}\groundRing\mathbf{a}\] 
of matrix compositions, i.e., matrices with entries in $\mathfrak{M}_d$ which do not contain $\varepsilon$-rows or $\varepsilon$-columns. 
Additionally to those $\mathfrak{M}_d$-valued matrices, $\composition$ contains the empty composition denoted by $\ec$. 
Recall from \cite[Def.~2.1]{DS23} that any two matrix compositions $\mathbf{a}$ and $\mathbf{b}$ can be concatenated along the diagonal, $\mathbf{c}=\diag(\mathbf{a},\mathbf{b})$.  We denote by $\compositionConnected$ the subset of connected compositions, i.e., all $\mathbf{c}$ which have only trivial decompositions, that is $\mathbf{a}=\ec$ or $\mathbf{b}=\ec$. 
\index[general]{compCon@$\compositionConnected$}
\index[general]{ec@$\ec$}
\index[general]{H@H}
\index[general]{composition@$\composition$}
\index[general]{Md@$\mathfrak{M}_d$}
\index[general]{star@$\star$}
\index[general]{epsilon@$\varepsilon$}

\begin{remark}\label{rem:identificationdiagAndStringCon}
   We can identify $\composition$ with the Kleene star  of connected compositions $\compositionConnected^*$. For this compare \cite[Lem.~2.3]{DS23} which defines a one-to-one correspondence by sending the composition $\diag(\mathbf{u}_1,\dots,\mathbf{u}_{\ell})\in\composition$
   to the word $\mathbf{u}_1\dotsm\mathbf{u}_{\ell}\in\compositionConnected^*$.
   The empty composition $\ec$ is identified with  the empty word in $\compositionConnected^*$. 
   This identification requires to select the explicit coproduct $\Delta$ dual to diagonal concatenation, i.e. \cite[Def.~2.10]{DS23}. 
   Whenever we speak of such an identification, we implicitly make this choice and write $H=\groundRing\langle\compositionConnected\rangle$.
\end{remark}
\index[general]{KlComp@$\groundRing\langle\compositionConnected\rangle$}
\index[general]{compConStar@$\compositionConnected^*$}
\index[general]{diag@$\diag$}
\index[general]{Delta@$\Delta$}

 Let $\len\colon\compositionConnected^*\rightarrow\N_0$ denote the \DEF{length} of monomials. With the identification from \Cref{rem:identificationdiagAndStringCon} we have $\len(\ec)=0$ and $\len(\mathbf{a}):=\ell$ for every nonempty $\mathbf{a}=\diag(\mathbf{u}_1,\dots,\mathbf{u}_\ell)$ with $\mathbf{u}\in\compositionConnected^\ell$. 
 \index[general]{length@$\len:\compositionConnected^*\rightarrow\N_0$}

\section{The matrix composition Hopf algebra}

We begin with recalling the two-parameter shuffle from \cite{GLNO2022}, presented here in the language of 
\cite{DS23}.
Note that we define the operation in the larger $\groundRing$-module $H$, i.e., we use the extended alphabet $\compositionConnected$. 

\subsection{Two-parameter shuffles}
  For every $n\in\N$ let $\Sigma_n$ denote the \DEF{permutations} of $\{1,\dots,n\}$. 
  As in the classical setting, let $\sh(m,s)$ with $m,s\in\N$ denote the subset of permutations $q\in\Sigma_{m+s}$ 
such that $q(1)<\ldots<q(m)$ and $q(m+1)<\ldots<q(m+s)$.
  \index[general]{sh@$\sh$}
  \index[general]{Sigma@$\Sigma_n$}
  \index[general]{shuffle@$\shuffle$}
\begin{definition}\label{def:twodim_qshuffle}
The \DEF{two-parameter shuffle product} is the bilinear mapping
\[\shuffle\colon H\times H\rightarrow H\]  determined on nonempty $\groundRing$-basis elements 
$(\mathbf{a},\mathbf{b})\in\composition\times\composition$ via 
\[\mathbf{a}\shuffle\mathbf{b}
:=
\sum_{\substack{p\in\sh(\rows(\mathbf{a}),\rows(\mathbf{b}))\\q\in\sh(\cols(\mathbf{a}),\cols(\mathbf{b}))}}\;\;
{\left(\diag(\mathbf{a},\mathbf{b})_{p^{-1}(u),q^{-1}(v)}\right)}_{u,v}\]
and where $\mathbf{a}\shuffle\ec:=\ec\shuffle\mathbf{a}:=\mathbf{a}$ for all $\mathbf{a}\in\composition$. 
\end{definition}

\begin{example}
    \begin{align*}\begin{bmatrix}\w{1}\end{bmatrix}\shuffle\begin{bmatrix}\w{2}\end{bmatrix}
    &=
    \begin{bmatrix}\w{1}&\varepsilon\\
    \varepsilon&\w{2}\end{bmatrix}
    +
    \begin{bmatrix}\varepsilon&\w{1}\\
    \w{2}&\varepsilon\end{bmatrix}
    +
    \begin{bmatrix}\varepsilon&\w{2}\\
    \w{1}&\varepsilon\end{bmatrix}
    +
    \begin{bmatrix}\w{2}&\varepsilon\\
    \varepsilon&\w{1}\end{bmatrix}
    \\
\begin{bmatrix}\w{1}\end{bmatrix}\shuffle\begin{bmatrix}\w{2}&\w{3}\end{bmatrix}
&=
\begin{bmatrix}\w{1}&\varepsilon&\varepsilon\\
\varepsilon&\w{2}&\w{3}\end{bmatrix}
+
\begin{bmatrix}\varepsilon&\w{1}&\varepsilon\\
\w{2}&\varepsilon&\w{3}\end{bmatrix}
+
\begin{bmatrix}\varepsilon&\varepsilon&\w{1}\\
\w{2}&\w{3}&\varepsilon\end{bmatrix}\\
&\;\;\;\;\;\;+
\begin{bmatrix}\varepsilon&\w{2}&\w{3}\\
\w{1}&\varepsilon&\varepsilon\end{bmatrix}+
\begin{bmatrix}\w{2}&\varepsilon&\w{3}\\
\varepsilon&\w{1}&\varepsilon\end{bmatrix}
+
\begin{bmatrix}\w{2}&\w{3}&\varepsilon\\
\varepsilon&\varepsilon&\w{1}\end{bmatrix}
\end{align*}
\end{example}

\begin{theorem}
    $(H,\shuffle,\Delta)$ is a graded, connected, and commutative Hopf algebra.
\end{theorem}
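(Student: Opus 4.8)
The strategy is to exhibit $(H,\shuffle,\Delta)$ as a graded connected bialgebra and then invoke the standard fact that every connected graded bialgebra carries a unique antipode. For the grading I would assign to a nonempty $\mathbf{a}\in\composition$ the non-negative integer equal to $\rows(\mathbf{a})+\cols(\mathbf{a})$ plus the total number of letters of $\mathfrak{M}_d$ occurring among the entries of $\mathbf{a}$ (counted with multiplicity), and degree $0$ to $\ec$. This gives $H=\bigoplus_{n\ge 0}H_n$ with each $H_n$ finite-dimensional and $H_0=\groundRing\ec$, so $H$ is graded and connected. Since $\mathbf{a}\shuffle\mathbf{b}$ only interleaves the rows and columns of $\diag(\mathbf{a},\mathbf{b})$---adding sizes and taking the disjoint union of the non-$\varepsilon$ entries---the product is homogeneous, and since diagonal (de)concatenation adds sizes and contents across the two tensor factors, the coproduct $\Delta$ is homogeneous as well.

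Next I would verify the unital, associative, commutative algebra axioms for $(H,\shuffle,\ec)$. The unit law holds by definition. Commutativity comes from the block transposition exchanging the two summands: it induces a bijection $\sh(\rows\mathbf{a},\rows\mathbf{b})\times\sh(\cols\mathbf{a},\cols\mathbf{b})\to\sh(\rows\mathbf{b},\rows\mathbf{a})\times\sh(\cols\mathbf{b},\cols\mathbf{a})$ under which the $(p,q)$-summand of $\mathbf{a}\shuffle\mathbf{b}$ is carried to the corresponding summand of $\mathbf{b}\shuffle\mathbf{a}$, because $\diag(\mathbf{b},\mathbf{a})$ arises from $\diag(\mathbf{a},\mathbf{b})$ by that same block permutation of rows and of columns. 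Associativity is obtained by running the classical shuffle argument in the row- and column-directions independently: both $(\mathbf{a}\shuffle\mathbf{b})\shuffle\mathbf{c}$ and $\mathbf{a}\shuffle(\mathbf{b}\shuffle\mathbf{c})$ equal the sum over order-preserving $3$-block shuffles $p$ of the rows and $q$ of the columns of the matrix whose $(u,v)$-entry is the $(p^{-1}(u),q^{-1}(v))$-entry of $\diag(\mathbf{a},\mathbf{b},\mathbf{c})$, using the standard bijections relating iterated $2$-shuffles to $3$-shuffles together with the associativity of $\diag$.

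For the coalgebra structure I would use \Cref{rem:identificationdiagAndStringCon}: under the identification $H=\groundRing\langle\compositionConnected\rangle$, the coproduct $\Delta$ dual to diagonal concatenation is the deconcatenation coproduct, which is coassociative and counital with counit the coefficient of $\ec$; this is classical and is already recorded in \cite{DS23}. The substantive step is the bialgebra compatibility $\Delta(\mathbf{a}\shuffle\mathbf{b})=\Delta(\mathbf{a})\shuffle\Delta(\mathbf{b})$ in $H\otimes H$ (together with the trivial facts that $\Delta$ is unital and the counit is multiplicative), and this is the step I expect to be the main obstacle. My approach would be a bijective one generalizing the classical shuffle--deconcatenation identity. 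The key observation is that because $p\in\sh(\rows\mathbf{a},\rows\mathbf{b})$ is increasing on the rows of $\mathbf{a}$ and on the rows of $\mathbf{b}$ separately (and similarly for $q$ on columns), every decomposition into a block-diagonal pair $\diag(M',M'')$ of a matrix $M$ occurring in $\mathbf{a}\shuffle\mathbf{b}$ forces the rows of $\mathbf{a}$ landing in $M'$ to be a prefix of the rows of $\mathbf{a}$, and likewise for $\mathbf{b}$ and for the columns; the no-$\varepsilon$-row/column condition then shows this data is equivalent to a choice of diagonal decompositions $\mathbf{a}=\diag(\mathbf{a}_1,\mathbf{a}_2)$ and $\mathbf{b}=\diag(\mathbf{b}_1,\mathbf{b}_2)$ together with a shuffle of $(\mathbf{a}_1,\mathbf{b}_1)$ and a shuffle of $(\mathbf{a}_2,\mathbf{b}_2)$ producing $M'$ and $M''$ respectively. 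Establishing that this correspondence is a well-defined bijection matching the matrices on the two sides is the technical core; alternatively one may invoke the bialgebra structure of the two-parameter shuffle from \cite{GLNO2022}.

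Finally, with $(H,\shuffle,\Delta)$ a connected graded bialgebra that is moreover commutative, I would conclude by the standard theorem (e.g.\ \cite{grinberg2020hopf}) that the identity map is invertible for the convolution product on $\End(H)$, which furnishes a unique antipode; hence $(H,\shuffle,\Delta)$ is a graded, connected, and commutative Hopf algebra as claimed.
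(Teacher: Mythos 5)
Your proposal is correct, and it fleshes out precisely the first of the three routes that the paper's (deliberately terse) proof points to: an elementary direct verification ``similar to [Thm.~2.12]{DS23}'', i.e.\ checking homogeneity of $\shuffle$ and $\Delta$, commutativity and associativity via the row/column shuffle bijections, the deconcatenation coalgebra structure, the bialgebra compatibility by the prefix/off-diagonal-$\varepsilon$ bijection, and then Takeuchi's antipode for connected graded bialgebras. The paper itself does not carry this out; instead it develops and prefers two transport-of-structure arguments, namely pulling the Hopf structure back along the two-parameter Hoffman isomorphism $\Phi$ of \Cref{thm_iso_sh_qsh}, or along the isomorphism with the one-parameter block shuffle Hopf algebra as in \Cref{subsec:applic}. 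Your route is more self-contained (and is the one needed if one wants the statement before building $\Phi$), while the paper's route reuses machinery it must build anyway and avoids re-running the combinatorial bijection. The only place where your sketch leaves real work is the compatibility $\Delta(\mathbf{a}\shuffle\mathbf{b})=\Delta(\mathbf{a})\shuffle\Delta(\mathbf{b})$: there you should record explicitly that a block-diagonal split of $\mathbf{P}\diag(\mathbf{a},\mathbf{b})\mathbf{Q}^\top$ at position $(r,c)$ forces, via the monotonicity of $p$ and $q$, split points $(k,l)$ in $\mathbf{a}$ and $(k',l')$ in $\mathbf{b}$ whose off-diagonal blocks are entirely $\varepsilon$, and that the absence of $\varepsilon$-rows and $\varepsilon$-columns then makes $\mathbf{a}=\diag(\mathbf{a}_1,\mathbf{a}_2)$, $\mathbf{b}=\diag(\mathbf{b}_1,\mathbf{b}_2)$ legitimate elements of $\composition$; with that spelled out the bijection is complete, and the argument goes through as you describe.
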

\begin{proof}
   An elementary way of proving this is similar to \cite[Thm.~2.12]{DS23}. 
   Alternatively we can use the novel two-parameter version of Hoffman's isomorphism $\Phi$ developed in this article  (\Cref{thm_iso_sh_qsh}), or an analogous proof presented in \Cref{subsec:applic} via the one-parameter framework. 
\end{proof}

We can describe permutations in $\sh(m,s)\subseteq\Sigma_{m+s}$ with a one-hot matrix encoding, similar to \cite[Rem.~5.8]{DS23} for quasi-shuffles.  
With this we can understand the two-parameter shuffle product via suitable matrix actions on the row and column space\footnote{The matrix action on the row and column space in $\composition$ is defined in \cite[Sec.~5.2]{DS23}.}. 

\begin{remark}\label{rem:SH_121}
The following subset of permutation matrices 
\[\SH(m,s):=\left\{\begin{bmatrix}\e{\iota_1}&\cdots&\e{\iota_m}&\e{\kappa_1}&\cdots&\e{\kappa_s}\end{bmatrix}\in \Sigma_{m+s}\mid
\iota_p < \iota_{p+1}, 
\kappa_q<\kappa_{q+1}\right\}\]
is in one-to-one correspondence to $\sh(m,s)$. 
\end{remark}
\index[general]{SH@$\SH$}

\begin{example}
For  $p\in\sh(2,1)$ with $p(1)=1,p(2)=3$ and  $p(3)=2$, 
\[{\left(\begin{bmatrix}\w{1}&\varepsilon\\\w{2}&\varepsilon\\\varepsilon&\w{3}\end{bmatrix}_{p^{-1}(u),v}\right)}_{u,v}=
\begin{bmatrix}\w{1}&\varepsilon\\\varepsilon&\w{3}\\\w{2}&\varepsilon\end{bmatrix}_{u,v}=
{\left(
\begin{bmatrix}1&0&0\\0&0&1\\0&1&0\end{bmatrix}
\cdot\begin{bmatrix}\w{1}&\varepsilon\\\w{2}&\varepsilon\\\varepsilon&\w{3}\end{bmatrix}\right)}_{u,v.}
\]
\end{example}

\begin{lemma}\label{lem:shuffle_via_matrix_not}For all nonempty $\mathbf{a},\mathbf{b}\in\composition$, we have 
\[\mathbf{a}\shuffle\mathbf{b}=\sum_{\substack{\mathbf{P}\in\SH(\rows(\mathbf{a}),\rows(\mathbf{b}))\\\mathbf{Q}\in\SH(\cols(\mathbf{a}),\cols(\mathbf{b}))}}\;\;\mathbf{P}\diag(\mathbf{a},\mathbf{b})\mathbf{Q}^\top.\]
\end{lemma}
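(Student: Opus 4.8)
The plan is to show that the reindexed matrices appearing in \Cref{def:twodim_qshuffle} are exactly the matrices $\mathbf{P}\diag(\mathbf{a},\mathbf{b})\mathbf{Q}^\top$ as $(\mathbf{P},\mathbf{Q})$ ranges over $\SH(\rows(\mathbf{a}),\rows(\mathbf{b}))\times\SH(\cols(\mathbf{a}),\cols(\mathbf{b}))$, and that the correspondence $(p,q)\leftrightarrow(\mathbf{P},\mathbf{Q})$ is a bijection preserving summands. First I would fix nonempty $\mathbf{a},\mathbf{b}$, write $m=\rows(\mathbf{a})$, $s=\rows(\mathbf{b})$, and invoke \Cref{rem:SH_121}: to a shuffle permutation $p\in\sh(m,s)$ one associates the one-hot matrix $\mathbf{P}=[\ec_{p(1)}\ \cdots\ \ec_{p(m+s)}]$ whose $j$-th column is the standard basis vector $\ec_{p(j)}$, and this is a bijection onto $\SH(m,s)$; symmetrically for $q$ and columns. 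So it suffices to prove, for a fixed such pair, the identity
\[
{\left(\diag(\mathbf{a},\mathbf{b})_{p^{-1}(u),q^{-1}(v)}\right)}_{u,v}=\mathbf{P}\,\diag(\mathbf{a},\mathbf{b})\,\mathbf{Q}^\top.
\]

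The key step is a direct entrywise computation. Writing $\mathbf{c}=\diag(\mathbf{a},\mathbf{b})$ and letting $\mathbf{P}=(\mathbf{P}_{u,k})$ be the permutation matrix with $\mathbf{P}_{u,k}=1$ iff $u=p(k)$, i.e.\ $k=p^{-1}(u)$, and similarly $\mathbf{Q}_{v,l}=1$ iff $l=q^{-1}(v)$, one has
\[
(\mathbf{P}\,\mathbf{c}\,\mathbf{Q}^\top)_{u,v}=\sum_{k,l}\mathbf{P}_{u,k}\,\mathbf{c}_{k,l}\,\mathbf{Q}_{v,l}=\mathbf{c}_{p^{-1}(u),\,q^{-1}(v)},
\]
since exactly one term survives, namely $k=p^{-1}(u)$, $l=q^{-1}(v)$. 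This is precisely the $(u,v)$-entry of the matrix in \Cref{def:twodim_qshuffle}. Summing over all $(p,q)\in\sh(m,s)\times\sh(\cols(\mathbf{a}),\cols(\mathbf{b}))$ and using the bijection with $\SH(m,s)\times\SH(\cols(\mathbf{a}),\cols(\mathbf{b}))$ from \Cref{rem:SH_121} converts the sum in \Cref{def:twodim_qshuffle} into the sum in the statement, completing the proof.

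There is essentially no serious obstacle here; the only point requiring a little care — and the place I would spend most of the writing — is bookkeeping the index conventions, in particular confirming that the one-hot encoding in \Cref{rem:SH_121} produces the matrix with columns $\ec_{p(j)}$ (so that left multiplication by $\mathbf{P}$ sends row $k$ to row $p(k)$, matching the reindexing $u\mapsto\diag(\mathbf{a},\mathbf{b})_{p^{-1}(u),\bullet}$ used in \Cref{def:twodim_qshuffle}), and that acting on the column space is implemented by right multiplication by $\mathbf{Q}^\top$ rather than $\mathbf{Q}$. Once the convention is pinned down, the example following \Cref{rem:SH_121} already exhibits the mechanism in a special case, and the general argument is the one-line identity above. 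I would also note that the empty-composition cases are vacuous since the lemma is stated only for nonempty $\mathbf{a},\mathbf{b}$.
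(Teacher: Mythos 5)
Your proof is correct and is exactly the argument the paper leaves implicit: the lemma is stated without proof as an immediate consequence of the one-hot encoding in \Cref{rem:SH_121}, and your entrywise computation $(\mathbf{P}\,\diag(\mathbf{a},\mathbf{b})\,\mathbf{Q}^\top)_{u,v}=\diag(\mathbf{a},\mathbf{b})_{p^{-1}(u),q^{-1}(v)}$ together with the bijection $(p,q)\leftrightarrow(\mathbf{P},\mathbf{Q})$ is precisely the intended justification. The only cosmetic point is that the standard basis vectors are denoted $e_i$ in the paper, while $\ec$ is reserved for the empty composition, so you should not write $\ec_{p(j)}$ for the columns of $\mathbf{P}$.
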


\subsection{Block shuffles}\label{sec:blockShuffles}

Let $(H,\blockShuffle,\Delta)$ denote the classical Hopf algebra $(H,\blockShuffle,\Delta)$ over the extended alphabet $\compositionConnected$ which shuffles the blocks (connected components with respect to $\Delta$) of matrix compositions.    
A formal way of defining this block shuffle is provided in \Cref{def:rec_def_blockShuffle}. 
\begin{example}
        \begin{align*}
\begin{bmatrix}\w{1}\end{bmatrix}\blockShuffle\begin{bmatrix}\w{2}\end{bmatrix}
    &=
\begin{bmatrix}\w{1}&\varepsilon\\
\varepsilon&\w{2}\end{bmatrix}
+
\begin{bmatrix}\w{2}&\varepsilon\\
\varepsilon&\w{1}\end{bmatrix}\\
\begin{bmatrix}\w{1}\end{bmatrix}\blockShuffle\begin{bmatrix}\w{2}&\w{3}\end{bmatrix}
&=\begin{bmatrix}\w{1}&\varepsilon&\varepsilon\\
\varepsilon&\w{2}&\w{3}\end{bmatrix}
+
\begin{bmatrix}\w{2}&\w{3}&\varepsilon\\
\varepsilon&\varepsilon&\w{1}\end{bmatrix}\\
\begin{bmatrix}\w{1}\end{bmatrix}\blockShuffle\begin{bmatrix}\w{2}&\varepsilon&\varepsilon\\
\varepsilon&\w{3}&\w{4}\end{bmatrix}
&=\begin{bmatrix}\w{1}&\varepsilon&\varepsilon&\varepsilon\\
\varepsilon&\w{2}&\varepsilon&\varepsilon\\
\varepsilon&\varepsilon&\w{3}&\w{4}\end{bmatrix}
+
\begin{bmatrix}\w{2}&\varepsilon&\varepsilon&\varepsilon\\
\varepsilon&\w{1}&\varepsilon&\varepsilon\\
\varepsilon&\varepsilon&\w{3}&\w{4}\end{bmatrix}
+\begin{bmatrix}\w{2}&\varepsilon&\varepsilon\\
\varepsilon&\w{3}&\w{4}&\varepsilon\\
\varepsilon&\varepsilon&\varepsilon&\w{1}\end{bmatrix}
\end{align*}
\end{example}

\begin{definition}
    \label{def:rec_def_blockShuffle}
    The \DEF{block shuffle product} is the bilinear mapping
\[\blockShuffle:H\times H\rightarrow H, \] 
which satisfies the recursion 
\[\diag(\mathbf{a},\mathbf{u})\blockShuffle\diag(\mathbf{b},\mathbf{v})
=\diag(\mathbf{a},\mathbf{u}\blockShuffle\diag(\mathbf{b},\mathbf{v}))+\diag(\mathbf{b},\diag(\mathbf{a},\mathbf{u})\blockShuffle\mathbf{v})\]
for $\mathbf{a},\mathbf{b}\in\compositionConnected$ and $\mathbf{u},\mathbf{v}\in\composition$,
and such that
the empty matrix composition $\ec$ is the neutral element with respect to this product. 
\end{definition}
\index[general]{blockShuffle@$\blockShuffle$}
With this we obtain the (classical, one-parameter) shuffle algebra over the extended alphabet $\compositionConnected$. 
\begin{theorem}\label{thm:1paramHopfAlg}
    $(H,\blockShuffle,\Delta)$ is a graded, connected, and commutative Hopf algebra.
\end{theorem}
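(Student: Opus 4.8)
The plan is to recognise $(H,\blockShuffle,\Delta)$ as nothing but the classical shuffle Hopf algebra on the extended alphabet $\compositionConnected$, and then to invoke the classical theory. Using the identification $H=\groundRing\langle\compositionConnected\rangle$ from \Cref{rem:identificationdiagAndStringCon}, a nonempty composition $\diag(\mathbf{u}_1,\dots,\mathbf{u}_\ell)$ is the word $\mathbf{u}_1\dotsm\mathbf{u}_\ell$ over $\compositionConnected$, and, for $\mathbf{a}\in\compositionConnected$, the operation $\diag(\mathbf{a},-)$ is left concatenation by the letter $\mathbf{a}$. Under this dictionary the recursion of \Cref{def:rec_def_blockShuffle} becomes
\[(\mathbf{a}w)\blockShuffle(\mathbf{b}v)=\mathbf{a}\,\bigl(w\blockShuffle(\mathbf{b}v)\bigr)+\mathbf{b}\,\bigl((\mathbf{a}w)\blockShuffle v\bigr),\qquad \ec\blockShuffle w=w\blockShuffle\ec=w,\]
for letters $\mathbf{a},\mathbf{b}\in\compositionConnected$ and words $w,v$, which is exactly the defining recursion of the classical shuffle product; and, by the coproduct singled out in \Cref{rem:identificationdiagAndStringCon}, $\Delta$ is the deconcatenation coproduct $\mathbf{u}_1\dotsm\mathbf{u}_\ell\mapsto\sum_{k=0}^{\ell}(\mathbf{u}_1\dotsm\mathbf{u}_k)\otimes(\mathbf{u}_{k+1}\dotsm\mathbf{u}_\ell)$. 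So I would first spell out this translation carefully, concluding that $(H,\blockShuffle,\Delta)$ coincides with the shuffle algebra $(\groundRing\langle\compositionConnected\rangle,\shuffle,\Delta)$ over the extended alphabet, which is a graded, connected, commutative Hopf algebra by the classical theory, e.g.\ \cite{grinberg2020hopf}.

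Next I would record the grading explicitly. Equip $H$ with the grading inherited from $\composition$ by weight; since diagonal concatenation is additive for this grading, the word $\mathbf{u}_1\dotsm\mathbf{u}_\ell$ has degree $\sum_i\weight(\mathbf{u}_i)$, each letter $\mathbf{u}_i\in\compositionConnected$ carrying strictly positive degree (and there are only finitely many connected compositions of each weight, so each graded piece is finite-dimensional). The two displayed recursions then show that $\blockShuffle$ and $\Delta$ are homogeneous, so $H$ is a graded bialgebra, and it is connected because its degree-$0$ component is $\groundRing\ec$. Being graded and connected, it automatically admits an antipode, constructed by the usual degreewise recursion, so it is a Hopf algebra.

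The step I expect to need the most care is the bookkeeping that underpins the identification: that \Cref{def:rec_def_blockShuffle} really does determine a single, well-defined, associative and commutative product — equivalently, that it reproduces the classical shuffle once read through \Cref{rem:identificationdiagAndStringCon} — and that the coproduct fixed there is genuinely deconcatenation. Termination of the recursion is not an issue, since passing from $\diag(\mathbf{a},\mathbf{u})\blockShuffle\diag(\mathbf{b},\mathbf{v})$ to its two summands strictly decreases the total $\len$ of the pair of arguments. As an alternative to quoting the classical result, one could instead give a self-contained proof imitating the argument for \cite[Thm.~2.12]{DS23}: establish associativity, commutativity, unitality, the bialgebra compatibility of $\Delta$ with $\blockShuffle$, coassociativity and the counit axiom by induction on length, and then obtain the antipode for free from gradedness and connectedness; but I would prefer the shorter route through the classical shuffle algebra, since the two-parameter structure plays no role here.
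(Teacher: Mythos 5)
Your proposal is correct and matches the paper's (implicit) argument exactly: the paper states \Cref{thm:1paramHopfAlg} without proof precisely because, after the identification $H=\groundRing\langle\compositionConnected\rangle$ of \Cref{rem:identificationdiagAndStringCon}, the block shuffle is by construction the classical shuffle product over the alphabet $\compositionConnected$ and $\Delta$ is deconcatenation, so the result is the classical one. Your added care about the grading, finite-dimensionality of the graded pieces, and the antipode from connectedness is a reasonable way to make that citation fully precise.
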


As in \Cref{lem:shuffle_via_matrix_not}, we encode block shuffles via matrix notation. 
For this, we index elements due to \Cref{rem:SH_121},  \[\SH(a,b)=
    \left\{\begin{bmatrix}
            \e{\sigma^{-1}(1)}^\top\\
            \vdots\\
            \e{\sigma^{-1}(a+b)}^\top
            \end{bmatrix}\mid \sigma\in\sh(a,b)\right\}\]
   via its rows. 
   To encode block shuffles
     let $\otimes$ be the Kronecker product of matrices and 
            \[\bSH_\sigma(u,v):=
            \begin{bmatrix}
        \e{\sigma^{-1}(1)}^\top\otimes\mathrm{I}_{u_1}\\
            \vdots\\
            \e{\sigma^{-1}(a)}^\top\otimes\mathrm{I}_{u_a}\\
            \e{\sigma^{-1}(a+1)}^\top\otimes\mathrm{I}_{v_1}\\
            \vdots\\
            \e{\sigma^{-1}(a+b)}^\top\otimes\mathrm{I}_{v_b}
\end{bmatrix}\in\SH\left(\sum_{1\leq\alpha\leq a}u_\alpha,\sum_{1\leq\beta\leq b}v_\beta\right)\]
 be a \DEF{block shuffle action matrix} for every $\sigma\in\sh(a,b)$ and block sizes $u=(u_1,\dots, u_a)$ and $v=(v_1,\dots, v_b)$. 
 \index[general]{blockShuffleMatrix@$\bSH$}

\begin{lemma}\label{lem:blockShuffleViaMatrices}
For $\mathbf{a}=\diag(\mathbf{u}_1,\dots,\mathbf{u}_a)$ and $\mathbf{b}=\diag(\mathbf{v}_1,\dots,\mathbf{v}_b)$ with decompositions into connected compositions $\mathbf{u}$ and $\mathbf{v}$, we have 
\[\mathbf{a}\blockShuffle\mathbf{b}=\sum_{\sigma\in\sh(a,b)}\;\;\bSH_\sigma(\rows(\mathbf{u}),\rows(\mathbf{v}))\,\diag(\mathbf{a},\mathbf{b})\,\bSH_\sigma(\cols(\mathbf{u}),\cols(\mathbf{v}))^\top\]
where $\rows$ and $\cols$ is applied on the components of $\mathbf{u}$ and $\mathbf{v}$, respectively. 
\end{lemma}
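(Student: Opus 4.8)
The plan is to prove the identity by induction on the total number of blocks $a+b$, mirroring the recursion that defines $\blockShuffle$ in \Cref{def:rec_def_blockShuffle}. The base case is $a=0$ or $b=0$, where one composition is $\ec$: if $\mathbf{b}=\ec$ then $\mathbf{v}$ is the empty tuple, $\sh(a,0)=\{\id\}$, and the single summand on the right-hand side is $\bSH_{\id}(\rows(\mathbf{u}),())\,\diag(\mathbf{a},\ec)\,\bSH_{\id}(\cols(\mathbf{u}),())^{\top}$; since $\diag(\mathbf{a},\ec)=\mathbf{a}$ and $\bSH_{\id}(\rows(\mathbf{u}),())$ is the identity matrix (each row-block $\e{\alpha}^{\top}\otimes\mathrm{I}_{\rows(\mathbf{u}_\alpha)}$ selecting the $\alpha$-th column-block), this equals $\mathbf{a}=\mathbf{a}\blockShuffle\ec$.

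For the inductive step write $\mathbf{a}=\diag(\mathbf{u}_1,\mathbf{a}^-)$ with $\mathbf{a}^-=\diag(\mathbf{u}_2,\dots,\mathbf{u}_a)$ and $\mathbf{b}=\diag(\mathbf{v}_1,\mathbf{b}^-)$ with $\mathbf{b}^-=\diag(\mathbf{v}_2,\dots,\mathbf{v}_b)$, so that by \Cref{def:rec_def_blockShuffle} we have $\mathbf{a}\blockShuffle\mathbf{b}=\diag(\mathbf{u}_1,\mathbf{a}^-\blockShuffle\mathbf{b})+\diag(\mathbf{v}_1,\mathbf{a}\blockShuffle\mathbf{b}^-)$. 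On the matrix side, $\sh(a,b)$ partitions into the shuffles with $\sigma(1)=1$ (first output block from $\mathbf{a}$) and those with $\sigma(a+1)=1$ (first output block from $\mathbf{b}$), and deleting the forced first position identifies the former set with $\sh(a-1,b)$ via $\tau(j)=\sigma(j+1)-1$, and the latter with $\sh(a,b-1)$. It therefore suffices to prove that for $\sigma\in\sh(a,b)$ with $\sigma(1)=1$, corresponding to $\tau\in\sh(a-1,b)$,
\[\bSH_\sigma(\rows(\mathbf{u}),\rows(\mathbf{v}))\,\diag(\mathbf{a},\mathbf{b})\,\bSH_\sigma(\cols(\mathbf{u}),\cols(\mathbf{v}))^{\top}=\diag\!\Big(\mathbf{u}_1,\ \bSH_\tau(\rows(\mathbf{u}^-),\rows(\mathbf{v}))\,\diag(\mathbf{a}^-,\mathbf{b})\,\bSH_\tau(\cols(\mathbf{u}^-),\cols(\mathbf{v}))^{\top}\Big),\]
with $\mathbf{u}^-=(\mathbf{u}_2,\dots,\mathbf{u}_a)$, together with the symmetric statement for $\sigma(a+1)=1$; feeding in the induction hypothesis for $\mathbf{a}^-\blockShuffle\mathbf{b}$ and $\mathbf{a}\blockShuffle\mathbf{b}^-$ then closes the induction.

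To establish this matrix factorization, note that by associativity of $\diag$ we have $\diag(\mathbf{a},\mathbf{b})=\diag(\mathbf{u}_1,\diag(\mathbf{a}^-,\mathbf{b}))$, a block matrix carrying $\mathbf{u}_1$ in the top-left corner, $\diag(\mathbf{a}^-,\mathbf{b})$ in the bottom-right, and $\varepsilon$-blocks off the diagonal. When $\sigma(1)=1$, unwinding the Kronecker-product definition of $\bSH_\sigma$---and using that, under the indexing of \Cref{rem:SH_121}, left (resp. right) multiplication by a $\bSH$-matrix permutes entire row-blocks (resp. column-blocks) of the middle factor according to $\sigma$---shows that the row- and column-blocks belonging to $\mathbf{u}_1$ are left in place, so that $\bSH_\sigma(\rows(\mathbf{u}),\rows(\mathbf{v}))=\mathrm{I}_{\rows(\mathbf{u}_1)}\oplus\bSH_\tau(\rows(\mathbf{u}^-),\rows(\mathbf{v}))$ after the reindexing above, and likewise for the column factor. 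Multiplying the three block matrices then leaves $\mathbf{u}_1$ fixed in the corner, produces $\bSH_\tau(\cdots)\diag(\mathbf{a}^-,\mathbf{b})\bSH_\tau(\cdots)^{\top}$ in the complementary block, and keeps the off-diagonal $\varepsilon$-blocks as $\varepsilon$ since the same $\sigma$ acts on rows and on columns---precisely the claimed $\diag(\mathbf{u}_1,\cdot)$.

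I expect the one genuine obstacle to be exactly this last step: carefully translating the Kronecker-product definition of $\bSH_\sigma$ into the statement that sandwiching a block-diagonal matrix between $\bSH_\sigma(\rows(\cdot))$ and $\bSH_\sigma(\cols(\cdot))^{\top}$ permutes its diagonal blocks by $\sigma$, and checking that the block sizes line up so that the matrix splits off the summand $\mathrm{I}_{\rows(\mathbf{u}_1)}$ when $\sigma(1)=1$. Conceptually the statement is transparent---$\bSH_\sigma$ merely reorders the diagonal blocks $\mathbf{u}_1,\dots,\mathbf{u}_a,\mathbf{v}_1,\dots,\mathbf{v}_b$ of $\diag(\mathbf{a},\mathbf{b})$ by $\sigma$, and since $\sigma$ preserves the internal orders of the $\mathbf{u}$'s and of the $\mathbf{v}$'s this is exactly one interleaving summand of the block shuffle---so an equivalent writeup would be to first record $\mathbf{a}\blockShuffle\mathbf{b}=\sum_{\sigma\in\sh(a,b)}\diag(\mathbf{w}^\sigma)$, where $\mathbf{w}^\sigma$ is the $\sigma$-interleaving of the two block sequences (this is just that $\blockShuffle$ is the classical shuffle on $\groundRing\langle\compositionConnected\rangle$, cf. \Cref{rem:identificationdiagAndStringCon} and \Cref{thm:1paramHopfAlg}), and then match each summand with the corresponding matrix product; either way the crux is the same short computation with the action matrices.
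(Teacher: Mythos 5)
The paper states this lemma without proof, so there is no in-paper argument to compare against; your proposal supplies one, and it is correct. The induction on $a+b$ through the defining recursion of $\blockShuffle$ is sound: the partition of $\sh(a,b)$ according to whether position $1$ of the interleaving is occupied by the first block of $\mathbf{a}$ (forcing $\sigma(1)=1$) or of $\mathbf{b}$ (forcing $\sigma(a+1)=1$), the reindexing $\tau(j)=\sigma(j+1)-1$, and the factorization $\bSH_\sigma(\rows(\mathbf{u}),\rows(\mathbf{v}))=\mathrm{I}_{\rows(\mathbf{u}_1)}\oplus\bSH_\tau(\rows(\mathbf{u}^-),\rows(\mathbf{v}))$ for $\sigma(1)=1$ all check out, and conjugating $\diag(\mathbf{u}_1,\diag(\mathbf{a}^-,\mathbf{b}))$ by the same block permutation on rows and on columns indeed yields a block-diagonal matrix with the blocks permuted. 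You also correctly flag the one genuinely delicate point: read literally, the displayed Kronecker-product definition of $\bSH_\sigma$ does not type-check (the height $u_i$ of the $i$-th listed block row need not equal the width of the block column $\sigma^{-1}(i)$ in which its identity sits), and the intended object is the block permutation matrix whose $(i,j)$ block is $\delta_{j,\sigma^{-1}(i)}$ times an identity, sized so that block row $i$ of $\mathbf{P}M$ is block row $\sigma^{-1}(i)$ of $M$; with that reading your computation goes through. Your closing remark is in fact the shortest complete writeup: since $\blockShuffle$ is the classical shuffle on $\groundRing\langle\compositionConnected\rangle$, one has $\mathbf{a}\blockShuffle\mathbf{b}=\sum_{\sigma\in\sh(a,b)}\diag(\mathbf{w}_{\sigma^{-1}(1)},\dots,\mathbf{w}_{\sigma^{-1}(a+b)})$ with $\mathbf{w}=(\mathbf{u}_1,\dots,\mathbf{u}_a,\mathbf{v}_1,\dots,\mathbf{v}_b)$, and each summand is visibly the corresponding triple matrix product, so no induction is needed. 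The only cosmetic caveat in the inductive version is the base case $\sh(a,0)=\{\id\}$, which the paper does not define (it requires $m,s\in\N$); this is a harmless convention, but it is forced on you because the recursion bottoms out at an empty factor.
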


We can write the two-parameter shuffle product (\Cref{def:twodim_qshuffle}) as a block shuffle (\Cref{def:rec_def_blockShuffle}) plus some lower order terms with respect to the length in $\composition$.

\begin{lemma}\label{lem:block_shuffle_and_2param_shuffle}
For all $\mathbf{a},\mathbf{b}\in\composition$,     \[\mathbf{a}\shuffle\mathbf{b}=\mathbf{a}\blockShuffle\mathbf{b}+\sum_{1\leq i\leq \ell}\mathbf{c}_i\]
where $\mathbf{c}_i\in\composition$ with $\len(\mathbf{c}_i)<\len(\mathbf{a})+\len(\mathbf{b})$ for all $1\leq i
    \leq 
    \ell$.
\end{lemma}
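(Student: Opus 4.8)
The plan is to compare the two descriptions of the shuffle and block shuffle products obtained in \Cref{lem:shuffle_via_matrix_not} and \Cref{lem:blockShuffleViaMatrices}, and to show that every summand appearing in $\mathbf{a}\blockShuffle\mathbf{b}$ also appears in $\mathbf{a}\shuffle\mathbf{b}$, with all remaining summands of $\mathbf{a}\shuffle\mathbf{b}$ having strictly smaller length. First I would treat the degenerate cases: if $\mathbf{a}=\ec$ or $\mathbf{b}=\ec$, both products return the other argument and the claim is trivial (the sum over $i$ is empty). So assume $\mathbf{a}=\diag(\mathbf{u}_1,\dots,\mathbf{u}_a)$ and $\mathbf{b}=\diag(\mathbf{v}_1,\dots,\mathbf{v}_b)$ are nonempty, with $\mathbf{u}_\alpha,\mathbf{v}_\beta\in\compositionConnected$, so that $\len(\mathbf{a})=a$, $\len(\mathbf{b})=b$.

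The key observation is that the block shuffle action matrices $\bSH_\sigma(\rows(\mathbf{u}),\rows(\mathbf{v}))$, for $\sigma\in\sh(a,b)$, are exactly those permutation matrices in $\SH(\rows(\mathbf{a}),\rows(\mathbf{b}))$ which keep each block of consecutive rows belonging to a single $\mathbf{u}_\alpha$ (or $\mathbf{v}_\beta$) intact and in order — this is precisely the content of the Kronecker product with an identity block $\mathrm{I}_{u_\alpha}$ in the definition of $\bSH$. Thus $\{\bSH_\sigma(\rows(\mathbf{u}),\rows(\mathbf{v}))\mid\sigma\in\sh(a,b)\}$ is a subset of $\SH(\rows(\mathbf{a}),\rows(\mathbf{b}))$, and likewise for columns. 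Restricting the double sum in \Cref{lem:shuffle_via_matrix_not} to the pairs $(\mathbf{P},\mathbf{Q})$ of this ``block'' form, with the \emph{same} underlying $\sigma$ governing rows and columns, reproduces exactly the sum in \Cref{lem:blockShuffleViaMatrices}, i.e. yields $\mathbf{a}\blockShuffle\mathbf{b}$. Hence
\[
\mathbf{a}\shuffle\mathbf{b}-\mathbf{a}\blockShuffle\mathbf{b}=\sum_{(\mathbf{P},\mathbf{Q})\ \mathrm{not\ of\ block\ type}}\mathbf{P}\diag(\mathbf{a},\mathbf{b})\mathbf{Q}^\top,
\]
where ``not of block type'' means either $\mathbf{P}$ does not arise from any $\sigma$ as a block matrix, or $\mathbf{Q}$ does not, or they arise from \emph{different} $\sigma$'s. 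Each such summand is a single matrix composition $\mathbf{c}_i\in\composition$ (after deleting $\varepsilon$-rows and $\varepsilon$-columns, which produces no genuine loss since $\diag(\mathbf{a},\mathbf{b})$ has none), so it remains only to bound its length.

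The main obstacle is the length estimate $\len(\mathbf{c}_i)<a+b$. The idea is that $\len$ of a matrix composition counts its connected components under $\Delta$; a generic shuffle-interleaving of the rows and columns of $\diag(\mathbf{a},\mathbf{b})$ either splits some $\mathbf{u}_\alpha$ or $\mathbf{v}_\beta$ across the diagonal (which cannot increase, and typically decreases, the component count relative to the $a+b$ blocks we started with because a connected composition has $\len=1$ and cannot be refined), or, if all blocks stay intact, then interleaving a row-block of $\mathbf{a}$ strictly between two row-blocks of $\mathbf{b}$ while the columns are \emph{not} correspondingly reordered forces two formerly separate diagonal blocks to merge into one connected component. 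Concretely, I would argue: if $(\mathbf{P},\mathbf{Q})$ keeps all blocks intact but $\mathbf{P}$ and $\mathbf{Q}$ come from different $\sigma$'s, the resulting diagonal-block structure is strictly coarser than $a+b$ pieces, so $\len(\mathbf{c}_i)\le a+b-1$; if some block gets split, one shows directly from \Cref{rem:identificationdiagAndStringCon} that the number of connected components drops as well. Either way $\len(\mathbf{c}_i)<a+b=\len(\mathbf{a})+\len(\mathbf{b})$, which completes the proof. The one genuinely technical point to nail down carefully is that splitting a connected block strictly decreases (rather than merely fails to increase) the component count — this uses that each $\mathbf{u}_\alpha$ is by definition connected and hence indecomposable along the diagonal.
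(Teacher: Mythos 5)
Your proposal is correct and follows essentially the same route as the paper, which simply observes that the claim "follows with" the two matrix descriptions in \Cref{lem:shuffle_via_matrix_not,lem:blockShuffleViaMatrices}: the block-type pairs $(\mathbf{P},\mathbf{Q})=(\bSH_\sigma(\rows(\mathbf{u}),\rows(\mathbf{v})),\bSH_\sigma(\cols(\mathbf{u}),\cols(\mathbf{v})))$ form a sub-index-set of $\SH\times\SH$ reproducing $\mathbf{a}\blockShuffle\mathbf{b}$, and the remaining terms have smaller length. You correctly isolate the one point the paper leaves implicit --- that each connected block $\mathbf{u}_\alpha$, $\mathbf{v}_\beta$ must land inside a single connected component of $\mathbf{P}\diag(\mathbf{a},\mathbf{b})\mathbf{Q}^\top$ (by indecomposability), so the component count is at most $a+b$ with equality exactly for the matched block-type pairs.
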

\begin{proof}
    Follows with \Cref{lem:shuffle_via_matrix_not,lem:blockShuffleViaMatrices}. 
\end{proof}

\subsection{Two-parameter quasi-shuffles}
 As in the classical setting, let $\qSh(m,s;j)$ with $m,s,j\in\N$ denote the set of surjections
\[q:\{1,\ldots,\,m+s\}\twoheadrightarrow\{1,\ldots,\,j\}\] 
such that $q(1)<\ldots<q(m)$ and $q(m+1)<\ldots<q(m+s)$.
  \index[general]{qSh@$\qSh$}
  \index[general]{quasiShuffleProduct@$\qShuffle$}
From \cite[Def.~2.8]{DS23} we recall the \DEF{two-parameter quasi-shuffle product} 
\begin{equation}\label{eq:qShuffle}
    \mathbf{a}\qShuffle\mathbf{b}
=\sum_{j,k\in\N}\;\; 
\sum_{\substack{p\in\qSh(m,s;j)\\q\in\qSh(n,t;k)}}\;\;
\begin{bmatrix}
\mathbf{c}^{p,q}_{1,1}&\ldots&\mathbf{c}^{p,q}_{1,k}\\
\vdots&&\vdots\\
\mathbf{c}^{p,q}_{j,1}&\ldots&\mathbf{c}^{p,q}_{j,k}\\
\end{bmatrix}
\end{equation}
where
\begin{equation*}
\mathbf{c}^{p,q}_{x,y}=\underset{\substack{u\in p^{-1}(x)\\v\in q^{-1}(y)}}\bigstar{\diag(\mathbf{a},\mathbf{b})}_{u,v}\in\monoidComp_d.
\end{equation*}
As discussed in \cite[Rem.~5.8 and Lem.~5.11]{DS23}, we know that $\qSh(m,s;j)$ is in one-to-one correspondence to 
\index[general]{QSH@$\QSH$}
\[\QSH(m,s;j):=\left\{\begin{bmatrix}\e{\iota_1}&\cdots&\e{\iota_m}&\e{\kappa_1}&\cdots&\e{\kappa_s}\end{bmatrix}\in\N_0^{j\times(m+s)}\;
\begin{array}{|l}
  \iota_{1}<\cdots<\iota_{m} \\
  \kappa_{1}<\cdots<\kappa_s \\
  \text{right invertible}
\end{array}\right\}\]
and that \Cref{eq:qShuffle} translates to 
\[\mathbf{a}\qShuffle\mathbf{b}
=\sum_{j,k\in\N}\;\; 
\sum_{\substack{\mathbf{P}\in\QSH(m,s;j)\\\mathbf{Q}\in\QSH(n,t;k)}}
\mathbf{P}\diag(\mathbf{a},\mathbf{b}) \mathbf{Q}^\top.
\]
From \cite[Thm.~2.12]{DS23} we know that $(H,\qShuffle,\Delta)$ is a graded, connected, and commutative Hopf algebra. 

Similar as in \Cref{lem:block_shuffle_and_2param_shuffle} for two-parameter shuffles, we can write \Cref{eq:qShuffle} as a block shuffle plus some lower order terms with respect to the length in $\composition$. 
\begin{lemma}\label{lem:block_shuffle_and_2param_qShuffle}
For all nonempty $\mathbf{a},\mathbf{b}\in\composition$,     \[\mathbf{a}\qShuffle\mathbf{b}=\mathbf{a}\blockShuffle\mathbf{b}+\sum_{1\leq i\leq \ell}\mathbf{c}_i\]
where $\mathbf{c}_i\in\composition$ with $\len(\mathbf{c}_i)<\len(\mathbf{a})+\len(\mathbf{b})$ for all $1\leq i
    \leq 
    \ell$.
\end{lemma}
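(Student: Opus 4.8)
The plan is to mimic the proof of \Cref{lem:block_shuffle_and_2param_shuffle}, which followed from the matrix‑notation descriptions. The key point is that all three products (block shuffle, two‑parameter shuffle, two‑parameter quasi‑shuffle) are expressed as sums of terms $\mathbf{P}\diag(\mathbf{a},\mathbf{b})\mathbf{Q}^\top$ indexed by pairs of ``shuffle‑type'' matrices, and the terms of top length are exactly those coming from genuine permutation matrices that respect the block structure. So first I would rewrite $\mathbf{a}\qShuffle\mathbf{b}$ using the $\QSH$ description recalled just above, namely $\mathbf{a}\qShuffle\mathbf{b}=\sum_{j,k}\sum_{\mathbf{P}\in\QSH(m,s;j),\,\mathbf{Q}\in\QSH(n,t;k)}\mathbf{P}\diag(\mathbf{a},\mathbf{b})\mathbf{Q}^\top$, where $m=\rows(\mathbf{a})$, $s=\rows(\mathbf{b})$, $n=\cols(\mathbf{a})$, $t=\cols(\mathbf{b})$.

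Next I would identify which summands have length $\len(\mathbf{a})+\len(\mathbf{b})$. The length of the resulting matrix composition is the number of its diagonal blocks after deleting $\varepsilon$‑rows and $\varepsilon$‑columns. When $j=m+s$ and $k=n+t$, the surjections are bijections, so $\mathbf{P}\in\QSH$ and $\mathbf{Q}\in\QSH$ are honest permutation matrices; these are precisely the $\SH$‑matrices, and this sub‑sum is exactly $\mathbf{a}\blockShuffle\mathbf{b}+(\text{lower order})$ by \Cref{lem:block_shuffle_and_2param_shuffle} together with \Cref{lem:shuffle_via_matrix_not}. Equivalently, the top‑length part of $\mathbf{a}\shuffle\mathbf{b}$ already equals $\mathbf{a}\blockShuffle\mathbf{b}$, and $\mathbf{a}\qShuffle\mathbf{b}$ differs from $\mathbf{a}\shuffle\mathbf{b}$ only by terms in which at least one of the surjections $p,q$ is not injective — i.e.\ $j<m+s$ or $k<n+t$ — which forces at least one genuine merge $\bigstar$ of two entries of $\diag(\mathbf{a},\mathbf{b})$ into a single cell, strictly decreasing the number of blocks and hence the length. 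Each such $\mathbf{c}_i$ therefore satisfies $\len(\mathbf{c}_i)<\len(\mathbf{a})+\len(\mathbf{b})$.

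The one technical point to nail down is the claim that a merge strictly decreases the length, i.e.\ that for a permutation pair the block count is exactly $\len(\mathbf{a})+\len(\mathbf{b})$ while a strict surjection lowers it. For the permutation case this is the content of the block‑shuffle matrix description \Cref{lem:blockShuffleViaMatrices}: each $\bSH_\sigma$ keeps the $a+b$ blocks of $\diag(\mathbf{a},\mathbf{b})$ intact and merely permutes them, so $\mathbf{a}\blockShuffle\mathbf{b}$ is a sum of compositions of length $a+b=\len(\mathbf{a})+\len(\mathbf{b})$, and no term of $\mathbf{a}\shuffle\mathbf{b}$ can have larger length since interleaving rows/columns of the two compositions never creates new blocks. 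If $p$ (say) is not injective, some fibre $p^{-1}(x)$ contains indices from both the $\mathbf{a}$‑part and the $\mathbf{b}$‑part or two consecutive‑block indices, so the corresponding row of the output composition mixes what were separate blocks, reducing the total block count by at least one; a symmetric statement holds for $q$. Hence every summand of $\mathbf{a}\qShuffle\mathbf{b}$ outside the permutation sub‑sum has length $<\len(\mathbf{a})+\len(\mathbf{b})$, and collecting these as the $\mathbf{c}_i$ gives the claim.

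I expect the main obstacle to be purely bookkeeping: making precise the bijection between ``permutation summands of the quasi‑shuffle'' and ``summands of the block shuffle'' and checking that the merge operation $\bigstar$ indeed cannot preserve the block count — but since the analogous statement for $\shuffle$ is already in hand (\Cref{lem:block_shuffle_and_2param_shuffle}) and $\qShuffle$ is built from the same $\diag(\mathbf{a},\mathbf{b})$ with only additional merging, the proof reduces to the remark that $\mathbf{a}\qShuffle\mathbf{b}-\mathbf{a}\shuffle\mathbf{b}$ consists solely of strictly‑lower‑length terms, after which one invokes \Cref{lem:block_shuffle_and_2param_shuffle}.
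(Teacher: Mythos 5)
Your proof is correct and follows the route the paper intends: the paper leaves this lemma without explicit proof, the implicit argument being exactly the analogue of \Cref{lem:block_shuffle_and_2param_shuffle} via the $\QSH$ matrix description, i.e.\ the permutation stratum $j=m+s$, $k=n+t$ recovers $\mathbf{a}\shuffle\mathbf{b}$ and every non-injective $p$ or $q$ merges a row (or column) of $\mathbf{a}$ with one of $\mathbf{b}$, fusing a block of $\mathbf{a}$ to a block of $\mathbf{b}$ and strictly lowering the length. One small correction: since $p$ is strictly increasing on each of the two index ranges, a fibre with two elements necessarily contains one index from the $\mathbf{a}$-part and one from the $\mathbf{b}$-part, so your alternative ``or two consecutive-block indices'' never occurs and is not needed.
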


\section{Free Lyndon generators}
\label{sec:Lyndongens}

\subsection{Total order on connected compositions}

 We equip $\compositionConnected$ with a total order, e.g. part \ref{def:total_order2}.~of \Cref{def:total_order}. 
Note that this explicit order is arbitrary, and can be exchanged by  any other total order. 
For convenience, this explicit choice is fixed throughout this section, and illustrated in \Cref{ex:total_order}.

\begin{definition}\label{def:total_order}~
\begin{enumerate}
    \item\label{def:total_order1}
    Let $(\mathfrak{M}_d,>)$ be ordered degree-lexicoraphically with $\w{1}<\dots<\w{d}$. 
For arbitrary $n\in\N$, extend this lexicographically to $(\mathfrak{M}_d^n,>)$  via
\begin{equation*}
\mathbf{p}>\mathbf{q}:=
\begin{cases}\mathbf{p}_1>\mathbf{q}_1\text{ or }\\
\mathbf{p}_i=\mathbf{q}_i\text{ for all }i\leq j-1\text{ and }{\mathbf{p}}_j>{\mathbf{q}}_{j},
\end{cases}
\end{equation*}
where $\mathbf{p},\mathbf{q}\in \mathfrak{M}_d^n$. 
\item\label{def:total_order2}
For every $n\in\N$, let $\left(\mathfrak{M}_d^n,>\right)$ be ordered according to part \ref{def:total_order1}.  
Using this, we equip $(\compositionConnected,>)$ with a strict total order 
 via     \[\mathbf{a}>\mathbf{b}:=\begin{cases}
        \rows(\mathbf{a})>\rows(\mathbf{b})\text{ or}
        \\
        (
        \rows(\mathbf{a})=\rows(\mathbf{b}) \text{ and }\cols(\mathbf{a})>\cols(\mathbf{b}))\text{ or}
        \\
        \size(\mathbf{a})=\size(\mathbf{b}) \text{ and }\vectorize(\mathbf{a})>\vectorize(\mathbf{b}),
    \end{cases}\]
    where   $\mathbf{a},\mathbf{b}\in\compositionConnected$. Furthermore, let $\mathbf{a}>\ec$ for every $\mathbf{a}$. 
    
    \end{enumerate}
\end{definition}

\begin{example}\label{ex:total_order}
    For $>$ according to part \ref{def:total_order2}.~of \Cref{def:total_order}, we
    have for instance 
    \[\begin{bmatrix}
    \w{1}
\end{bmatrix}
<
\begin{bmatrix}
    \w{2}
\end{bmatrix}
<
\begin{bmatrix}
    \w{1}\star\w{2}\star\w{3}
\end{bmatrix}
<
\begin{bmatrix}
    \w{1}&\w{2}
\end{bmatrix}
<
\begin{bmatrix}
    \w{2}&\w{1}
\end{bmatrix}
<
\begin{bmatrix}
    \w{1}\\\w{2}
\end{bmatrix}
<
\begin{bmatrix}
    \w{2}\\\w{1}
\end{bmatrix}
<
\begin{bmatrix}
    \w{1}&\varepsilon\\\varepsilon&\w{2}
\end{bmatrix}.
\]
\end{example}

Having a total order on $\compositionConnected$ we can extend it to  $\compositionConnected^*$. 
Note that we do not yet identify $\compositionConnected^*$ with $\composition$. 

\index[general]{lex@$\leq_{\lex}$}
\index[general]{grlex@$<_{\grlex}$}
\begin{definition}\label{def:order}
Let $(\compositionConnected,>)$ be totally ordered. 
\begin{enumerate}
\item We recall the \DEF{lexicographic order} $\leq_{\lex}$ on $\compositionConnected^*$ via 
\[u\leq_{\lex}v:=
\begin{cases}
   \text{either } \exists i\leq\min\{\len(u),\len(v)\}:u_i<v_i\land u_j=v_j\;\forall j\leq i-1\\
   \text{or }\exists a\in \compositionConnected^*:v=ua. 
\end{cases}\]
\item\label{def:order2} Let the \DEF{graded lexicographic order} on $\compositionConnected^*$ be denoted by $<_{\grlex}$, that is  \[u<_{\grlex}v:=\begin{cases}\len(u)<\len(v)\text{ or}\\
    \len(u)=\len(v)\text{ and  }u<_{\lex}v.
    \end{cases}\]
    \end{enumerate}
\end{definition}

The order $<_{\grlex}$ in part \ref{def:order2}.~of \Cref{def:order}  is \DEF{$\len$-compatible} by construction, i.e., elements are first ordered by its length, and afterwards lexicographically to break ties. 
Furthermore it is a \DEF{monomial order}, i.e., 
\[a<_{\grlex}b\implies uav<_{\grlex}ubv\]
for all $a,b,u,v\in\compositionConnected^*$. 

\begin{example}\label{ex:order}
We continue with \Cref{ex:total_order}, i.e., we have a total order on $\compositionConnected$ according to \Cref{def:total_order}.
    Let $<_{\grlex}$ be due to \Cref{def:order}, e.g., 
\[\begin{bmatrix}
    \w{1}
\end{bmatrix}
<_{\grlex}
\begin{bmatrix}
    \w{1}\star\w{2}\star\w{3}
\end{bmatrix}
<_{\grlex}
\begin{bmatrix}
    \w{1}&\varepsilon\\\w{3}&\w{2}
\end{bmatrix}<_{\grlex}
\begin{bmatrix}
    \w{1}
\end{bmatrix}
\begin{bmatrix}
    \w{2}
\end{bmatrix}
<_{\grlex}
\begin{bmatrix}
    \w{2}
\end{bmatrix}
\begin{bmatrix}
    \w{1}
\end{bmatrix}
<_{\grlex}
\begin{bmatrix}
    \w{1}\\\w{2}
\end{bmatrix}
\begin{bmatrix}
    \w{3}
\end{bmatrix}
\begin{bmatrix}
    \w{4}
\end{bmatrix}.
\]
\end{example}

While the order of $\compositionConnected$ from \Cref{def:total_order} was arbitrary, 
the way \Cref{def:order} extends it to $\compositionConnected^*$ is crucial for the following section. 

\subsection{Lyndon words}
A word 
$w\in\compositionConnected^*$ is a \DEF{Lyndon word}, if and only if it is nonempty and lexicographically strictly smaller than any of its proper suffixes, that is 
   $ w
<_{\lex}
v$
for all nonempty words 
$v\in\compositionConnected^*$
 such that 
$w
=
u
v$
and 
$u\in\compositionConnected^*$ is nonempty.
Let 
 $\mathfrak{L}=\mathfrak{L}(\compositionConnected,<_{\lex})$ be the set of Lyndon words over the alphabet $\compositionConnected$.
 We define a family  ${(\mathcal{B}_w)}_{w\in\mathfrak{L}}$ of compositions  
via
\begin{equation}\label{eq:def_B_lyndon}
       \mathcal{B}_w:=\diag(\mathbf{w}_1,\dots,\mathbf{w}_{\len(w)})
   \end{equation}
for every 
$w=\mathbf{w}_1\dots\mathbf{w}_{\len(w)}\in\mathfrak{L}$. 
\index[general]{L@$\mathfrak{L}$}
\index[general]{B@$\mathcal{B}$}

 \begin{example}\label{ex:lyndon}
In the setting of \Cref{ex:order}, we have for instance 
\[\begin{bmatrix}
    \w{1}
\end{bmatrix},
\begin{bmatrix}
    \w{2}
\end{bmatrix},
\begin{bmatrix}
    \w{1}
\end{bmatrix}\begin{bmatrix}
    \w{2}
\end{bmatrix},
\begin{bmatrix}
    \w{3}
\end{bmatrix}
\begin{bmatrix}
    \w{1}\\\w{2}
\end{bmatrix},
\begin{bmatrix}
    \w{1}&\w{2}
\end{bmatrix}
\begin{bmatrix}
    \w{1}\\\w{2}
\end{bmatrix}
\in\mathfrak{L},
\]
and as non-examples, 
\[
\begin{bmatrix}
    \w{2}
\end{bmatrix}\begin{bmatrix}
    \w{1}
\end{bmatrix},
\begin{bmatrix}
    \w{1}\\\w{2}
\end{bmatrix}
\begin{bmatrix}
    \w{3}
\end{bmatrix},
\begin{bmatrix}
    \w{1}\\\w{2}
\end{bmatrix}
\begin{bmatrix}
    \w{1}&\w{2}
\end{bmatrix}, 
\begin{bmatrix}
    \w{1}\\\w{2}
\end{bmatrix}
\begin{bmatrix}
    \w{3}
\end{bmatrix}
\begin{bmatrix}
    \w{4}
\end{bmatrix}
\not\in\mathfrak{L}. 
\]
 \end{example}

In \Cref{thm:iso} we show that ${(\mathcal{B}_w)}_{w\in\mathfrak{L}}$ is a family of free generators  for the block shuffle algebra, as well as for two-parameter shuffle algebra, and for the two-parameter quasi-shuffle algebra.
\Cref{def:B_words} treats these three products from above in parallel, and requires the so-called CFL factorization of words.

 For nonempty $u\in\compositionConnected^*$
 let $(a_1,\dots,a_p)$ be its \DEF{Chen-Fox-Lyndon (CFL) factorization}, i.e., we have   $u=a_1\dots a_p$ and $a_1\geq_{\lex}\dots\geq_{\lex} a_p$ for $a_i\in\mathfrak{L}$ with $1\leq i\leq p$. This factorization always exists and it is unique, see e.g. \cite[Thm.~6.1.27]{grinberg2020hopf}.

\begin{definition}\label{def:B_words}
 Let $(H,m,+)$ be a commutative $\groundRing$-algebra. For every nonempty word $u\in\compositionConnected^*$ we 
    define \[\mathcal{B}_u^m:=m(\mathcal{B}_{a_1},\dots ,\mathcal{B}_{a_p})\in H\] 
    where $(a_1,\dots,a_p)$ is the CFL factorization of $u$, and $\mathcal{B}_{a_i}$ is according to \Cref{eq:def_B_lyndon}. If $u$ is empty, we define $\mathcal{B}^m_{u}:=\ec$. 
   \end{definition}
   \index[general]{Bm@${\mathcal{B}}^m$}
 
Note that for every Lyndon word $w\in\mathfrak{L}$ the identification in \Cref{rem:identificationdiagAndStringCon} and $\mathcal{B}_w$ from \cref{eq:def_B_lyndon} provide the same element in $H$. 
Otherwise, 
   if $u=\mathbf{u}_1\dots\mathbf{u}_{\len(u)}\not\in\mathfrak{L}$ is nonempty with $\mathbf{u}_i\in\compositionConnected$ for $1\leq i\leq \len(u)$, then \Cref{def:B_words} yields $\mathcal{B}_u^m$, which is distinct from $\diag(\mathbf{u}_1,\dots,\mathbf{u}_{\len(u)})$. 

\begin{example}
We continue with the framework of \Cref{ex:lyndon}. 
\begin{enumerate}
    \item For Lyndon word $w=\begin{bmatrix}
    \w{3}
\end{bmatrix}
    \begin{bmatrix}
    \w{1}\\\w{2}
\end{bmatrix}
\in\mathfrak{L}$
we have 
\[\mathcal{B}^m_w=\mathcal{B}_w=\begin{bmatrix}
    \w{3}&\varepsilon\\\varepsilon&\w{1}\\\varepsilon&\w{2}
\end{bmatrix}\]
regardless of the product $m$ in $H$.
\item 
For $u=
    \begin{bmatrix}
    \w{1}\\\w{2}
\end{bmatrix}
\begin{bmatrix}
    \w{3}
\end{bmatrix}
\in\compositionConnected^*$ we have $a=(a_1,a_2)=(\begin{bmatrix}
    \w{1}\\\w{2}
\end{bmatrix},\begin{bmatrix}
    \w{3}
\end{bmatrix})\in\mathfrak{L}^2$. 

In the algebras $(H,\blockShuffle,+)$ and $(H,\shuffle,+)$ we have
\[\mathcal{B}_u^{\;\blockShuffle\;}=\mathcal{B}_{a_1}\blockShuffle\mathcal{B}_{a_2}=\begin{bmatrix}
    \w{1}\\\w{2}
\end{bmatrix}\blockShuffle\begin{bmatrix}
    \w{3}
\end{bmatrix}
\quad\text{ and }\quad\mathcal{B}_u^{\;\shuffle\;}=\mathcal{B}_{a_1}\shuffle\mathcal{B}_{a_2}=\begin{bmatrix}
    \w{1}\\\w{2}
\end{bmatrix}\shuffle\begin{bmatrix}
    \w{3}
\end{bmatrix}. 
\]
%
\end{enumerate}
\end{example}

  From now on, we identify $\compositionConnected^*$ with $\composition$ as discussed in  \Cref{rem:identificationdiagAndStringCon}. 
  Then, $<_{\grlex}$ extends to the graded lexicoraphic order in $H=\groundRing\langle\compositionConnected\rangle$.
Let $\lm:H\setminus\{0\}\rightarrow\composition$ denote the \DEF{leading monomial} with respect to $<_{\grlex}$.
\index[general]{lm@$\lm$}

\begin{corollary}\label{cor:lm_sh_qSh_blockSh}
    For every $a,b\in H\setminus\{0\}$, we have \[\lm(a\blockShuffle b)=\lm(a\shuffle b)=\lm(a\qShuffle b).\]
\end{corollary}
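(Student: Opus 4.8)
The plan is to reduce all three statements to a single claim: for nonzero $a,b\in H$, $\lm(a\blockShuffle b)=\lm(a\qShuffle b)=\lm(a\shuffle b)$, by exploiting \Cref{lem:block_shuffle_and_2param_shuffle} and \Cref{lem:block_shuffle_and_2param_qShuffle}, which express the two-parameter shuffle and quasi-shuffle as the block shuffle plus terms of strictly smaller length in $\composition$. First I would recall that $<_{\grlex}$ is $\len$-compatible, so any monomial of strictly smaller length is strictly smaller in $<_{\grlex}$ than any monomial of larger length. Hence, writing $\mathbf{a}\shuffle\mathbf{b}=\mathbf{a}\blockShuffle\mathbf{b}+\sum_i\mathbf{c}_i$ with $\len(\mathbf{c}_i)<\len(\mathbf{a})+\len(\mathbf{b})$, the correction terms cannot affect the leading monomial \emph{provided} the leading monomial of $\mathbf{a}\blockShuffle\mathbf{b}$ has length exactly $\len(\mathbf{a})+\len(\mathbf{b})$ and does not get cancelled. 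This is clear on basis elements: the block shuffle of two connected-component decompositions is a sum of matrix compositions all of length $\len(\mathbf{a})+\len(\mathbf{b})$ with coefficient $1$ each (no cancellation since distinct shuffles give distinct compositions, as one sees from \Cref{lem:blockShuffleViaMatrices}), so $\len(\lm(\mathbf{a}\blockShuffle\mathbf{b}))=\len(\mathbf{a})+\len(\mathbf{b})$, and then $\lm(\mathbf{a}\shuffle\mathbf{b})=\lm(\mathbf{a}\blockShuffle\mathbf{b})$; the same argument with \Cref{lem:block_shuffle_and_2param_qShuffle} handles $\qShuffle$.

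Next I would bootstrap from basis elements to general nonzero $a,b\in H$. Write $a=\sum_{\mathbf{a}}\lambda_{\mathbf{a}}\mathbf{a}$ and $b=\sum_{\mathbf{b}}\mu_{\mathbf{b}}\mathbf{b}$ with $\lm(a)=\mathbf{a}_0$, $\lm(b)=\mathbf{b}_0$. The key sub-claim is that $\lm(a\blockShuffle b)=\lm(\mathbf{a}_0\blockShuffle\mathbf{b}_0)$, i.e.\ the block shuffle is compatible with $<_{\grlex}$ in the strong sense that leading monomials multiply. For this I would show: (i) $\lm(\mathbf{a}\blockShuffle\mathbf{b})\le_{\grlex}\lm(\mathbf{a}_0\blockShuffle\mathbf{b}_0)$ whenever $\mathbf{a}\le_{\grlex}\mathbf{a}_0$, $\mathbf{b}\le_{\grlex}\mathbf{b}_0$, with equality only when $\mathbf{a}=\mathbf{a}_0$, $\mathbf{b}=\mathbf{b}_0$; and (ii) the monomial $\lm(\mathbf{a}_0\blockShuffle\mathbf{b}_0)$ occurs in $a\blockShuffle b$ with coefficient exactly $\lambda_{\mathbf{a}_0}\mu_{\mathbf{b}_0}\ne 0$, because no other pair $(\mathbf{a},\mathbf{b})$ in the support can contribute to it (by (i) with strict inequality) and within $\mathbf{a}_0\blockShuffle\mathbf{b}_0$ the leading monomial appears once. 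Granting (i), statement (ii) and hence the sub-claim follow. The same reasoning, now combined with \Cref{lem:block_shuffle_and_2param_shuffle,lem:block_shuffle_and_2param_qShuffle} and $\len$-compatibility, then yields $\lm(a\shuffle b)=\lm(a\qShuffle b)=\lm(a\blockShuffle b)$, which is the corollary.

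The main obstacle is step (i): establishing that the block shuffle respects $<_{\grlex}$, i.e.\ that $\mathbf{a}<_{\grlex}\mathbf{a}'$ implies $\lm(\mathbf{a}\blockShuffle\mathbf{b})<_{\grlex}\lm(\mathbf{a}'\blockShuffle\mathbf{b})$ and that $\lm$ of the block shuffle is a strictly monotone function of each argument. Since $\blockShuffle$ is, after the identification $H=\groundRing\langle\compositionConnected\rangle$, the ordinary shuffle product on words over the alphabet $\compositionConnected$ with the chosen total order, this is exactly the classical fact that for the shuffle product $\lm_{\lex}(u\shuffle v)$ equals the maximal element of $u\shuffle v$ and that this maximum is a monotone (indeed, strictly order-preserving) function of $u$ and $v$ in the lexicographic order — see e.g.\ the standard treatment of Lyndon words and shuffles in \cite[Ch.~6]{grinberg2020hopf}. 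I would phrase this as: the lexicographically largest word in $u\shuffle v$ is obtained by a greedy merge and depends strictly monotonically on $u,v$; upgrading from $\le_{\lex}$ to $<_{\grlex}$ is free because $\len$ is additive under shuffle. With (i) in hand the rest is bookkeeping, and the corollary follows.
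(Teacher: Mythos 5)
Your proposal is correct and follows the same route as the paper, whose proof simply cites \Cref{lem:block_shuffle_and_2param_shuffle,lem:block_shuffle_and_2param_qShuffle} together with the $\len$-compatibility of $<_{\grlex}$; your additional work in reducing general $a,b$ to the pair of leading monomials via the classical strict monotonicity of the lex-maximal shuffle term is exactly the detail the paper leaves implicit (and is needed, e.g., for the use of this corollary inside \Cref{lem:lmBa_is_a}). One small inaccuracy: the leading monomial of $\mathbf{a}_0\blockShuffle\mathbf{b}_0$ need not appear \emph{once} but only with a positive integer multiplicity (e.g. $\begin{bmatrix}\w{1}\end{bmatrix}\blockShuffle\begin{bmatrix}\w{1}\end{bmatrix}$ gives coefficient $2$), which is harmless since $\groundRing$ is a $\Q$-algebra, though like the paper you implicitly assume $\groundRing$ has no zero divisors so that $\lambda_{\mathbf{a}_0}\mu_{\mathbf{b}_0}\neq 0$.
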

\begin{proof}
    This is an immediate consequence of \Cref{lem:block_shuffle_and_2param_shuffle,lem:block_shuffle_and_2param_qShuffle}. 
\end{proof}

%


For nonempty $\mathbf{a}\in\composition$ let   
   $\mathcal{B}_{\mathbf{a}}^m:=\mathcal{B}^m_{\mathbf{u}_1\dots\mathbf{u}_{\len(\mathbf{a})}}$ be according to   \Cref{def:B_words}, and where $\mathbf{u}$ is the decomposition  of $\mathbf{a}$ into connected compositions.   Furthermore, let $\mathcal{B}_{\ec}^m:=\ec$.

\begin{lemma}\label{lem:lmBa_is_a}
In the three $\groundRing$-algebras  $(H,m,+)$ with $m\in\{\blockShuffle,\shuffle,\qShuffle\}$ we have  $$\lm(\mathcal{B}_{\mathbf{a}}^m)=\mathbf{a}$$
    for every $\mathbf{a}\in\composition$.
\end{lemma}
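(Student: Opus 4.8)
The statement to prove is that in each of the three algebras $(H,m,+)$ with $m\in\{\blockShuffle,\shuffle,\qShuffle\}$, the leading monomial of $\mathcal{B}_{\mathbf{a}}^m$ equals $\mathbf{a}$. By \Cref{cor:lm_sh_qSh_blockSh} the leading monomials of products agree across the three products, so it suffices to treat one product — the block shuffle $\blockShuffle$ is most convenient since it is a genuine one-parameter shuffle over the alphabet $\compositionConnected$. The plan is to reduce the claim, via the identification $H = \groundRing\langle\compositionConnected\rangle$, to the classical statement about shuffle products of words: namely, that for words $v_1 \geq_{\lex} \cdots \geq_{\lex} v_p$ the leading monomial (with respect to a $\len$-compatible monomial order) of $v_1 \shuffle \cdots \shuffle v_p$ is the concatenation $v_1 v_2 \cdots v_p$.

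**Key steps.** First I would unwind the definitions: writing the connected decomposition $\mathbf{a} = \diag(\mathbf{u}_1,\dots,\mathbf{u}_{\len(\mathbf{a})})$ and the CFL factorization $\mathbf{u}_1\cdots\mathbf{u}_{\len(\mathbf{a})} = a_1 \cdots a_p$ with $a_1 \geq_{\lex} \cdots \geq_{\lex} a_p$ in $\mathfrak{L}$, \Cref{def:B_words} gives $\mathcal{B}_{\mathbf{a}}^{\blockShuffle} = \mathcal{B}_{a_1} \blockShuffle \cdots \blockShuffle \mathcal{B}_{a_p}$, and each $\mathcal{B}_{a_i}$ equals (under the identification) the word $a_i$ itself by the remark following \Cref{def:B_words}. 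So the target reduces to $\lm(a_1 \blockShuffle \cdots \blockShuffle a_p) = a_1 \cdots a_p$. Next, since $\blockShuffle$ is length-additive (Definition~\ref{def:rec_def_blockShuffle} makes this clear by induction: $\len(\diag(\mathbf{a},\mathbf{u}) \blockShuffle \diag(\mathbf{b},\mathbf{v})) = \len(\mathbf{a}) + \len(\mathbf{u}) + \len(\mathbf{b}) + \len(\mathbf{v})$ on every term) and every monomial appearing in the block shuffle has the same length $\sum_i \len(a_i) = \len(\mathbf{a})$, the $\len$-compatible order $<_{\grlex}$ restricts on these terms to plain $<_{\lex}$. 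Now invoke the classical fact: for Lyndon (indeed arbitrary) words arranged in weakly decreasing lex order, the lexicographically largest word in their shuffle is the concatenation $a_1 \cdots a_p$, appearing with coefficient equal to a product of multinomial-type counts (in particular nonzero, hence not cancelled) — this is standard, e.g. \cite[Thm.~6.1.27]{grinberg2020hopf} / the Radford theorem machinery, adapted to the block shuffle which shuffles whole connected blocks rather than letters, but the argument is identical since a block of $\compositionConnected$ plays the role of a single letter. Finally I would verify the trivial cases: $\mathbf{a} = \ec$ gives $\mathcal{B}_{\ec}^m = \ec$ with $\lm(\ec) = \ec$, and when $p = 1$ the word $\mathbf{a}$ is itself Lyndon and $\mathcal{B}_{\mathbf{a}}^m = \mathbf{a}$ directly.

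**The main obstacle.** The routine-but-essential point is establishing $\lm(a_1 \blockShuffle \cdots \blockShuffle a_p) = a_1 \cdots a_p$ for the block shuffle; in the classical shuffle-of-words setting this is a known lemma, but here one must be careful that the block shuffle shuffles \emph{connected components} (atoms of $\compositionConnected$) rather than the underlying matrix entries, so the relevant alphabet is $\compositionConnected$ and the relevant order is the total order on $\compositionConnected$ from \Cref{def:total_order} extended lexicographically as in \Cref{def:order}. Once one observes that the block shuffle $\blockShuffle$ on $H = \groundRing\langle\compositionConnected\rangle$ is \emph{literally} the classical shuffle product on words over the alphabet $\compositionConnected$ — which is exactly what \Cref{def:rec_def_blockShuffle} says, since the recursion is the standard shuffle recursion with $\diag$ playing the role of concatenation — the classical result applies verbatim. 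I expect the bulk of the write-up to be this identification plus citing the classical leading-term computation; the only genuine care needed is confirming length-additivity so that $<_{\grlex}$ and $<_{\lex}$ coincide on the support of the product, and noting that the leading coefficient is a positive integer and therefore survives in any $\Q$-algebra $\groundRing$.
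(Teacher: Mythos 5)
Your proposal is correct and follows essentially the same route as the paper: reduce all three products to the block shuffle via \Cref{cor:lm_sh_qSh_blockSh}, then invoke the classical leading-monomial fact for shuffles of the CFL factors over the alphabet $\compositionConnected$ (the paper organizes this last step as an induction on $\len(\mathbf{a})$ peeling off one Lyndon factor at a time and citing \cite[Thm.~6.2.2]{grinberg2020hopf}, whereas you apply the multi-factor version directly, a cosmetic difference). Your explicit remarks that $<_{\grlex}$ agrees with $<_{\lex}$ on the length-homogeneous support and that the leading coefficient survives in a $\Q$-algebra are sound and fill in details the paper leaves implicit.
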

\begin{proof}
Clearly $\lm(\ec)=\ec$, hence we can assume that $\mathbf{a}$ is nonempty. 
    We use induction over the length of $\mathbf{a}$, i.e., we show 
    \[\lm(\mathcal{B}_{{\mathbf{u}}_1\dots\mathbf{u}_n}^m)=\diag(\mathbf{u}_1,\dots,\mathbf{u}_n)\]
    for every $n\in\N$ and $\mathbf{u}_i\in\compositionConnected$ with $1\leq i\leq n$. 
    The base case $n=1$ is clear with  $\compositionConnected\subseteq\mathfrak{L}$. 
    For the induction step, let $(a_1,\dots,a_p)$ be the CFL factorization of a word $\mathbf{u}_1\dots\mathbf{u}_n$ with length $n$. The case $p=1$ is clear again with $\compositionConnected\subseteq\mathfrak{L}$. Assuming $p>1$, the CFL decompositions of  $u:=a_1$ and $v:=a_2\dots a_p$ are $a_1$ and $(a_2,\dots,a_p)$ respectively. 
    Clearly $u\in\mathfrak{L}$
    hence $\mathcal{B}^m_u=\diag(\mathbf{u}_1,\dots,\mathbf{u}_\ell)$ for a unique $\ell\in\N$. Furthermore $\len(v)<n$, thus inductively \begin{equation}
        \lm(\mathcal{B}_{v}^m)=\lm(\mathcal{B}^m_{{\mathbf{u}}_{\ell+1}\dots\mathbf{u}_n})=\diag(\mathbf{u}_{\ell+1},\dots,\mathbf{u}_{n}).\end{equation}
    Now we have to distinguish between the underlying product $m$.
    \begin{enumerate}
        \item\label{proof:lmBa_case1_blockShuffle} If  $m=\blockShuffle$ then \begin{align*}\mathcal{B}_{{\mathbf{u}}_1\dots\mathbf{u}_n}^{\;\blockShuffle}
        &=\mathcal{B}_{a_1}^{\;\blockShuffle}\blockShuffle\dots\blockShuffle\mathcal{B}_{a_p}^{\;\blockShuffle}
        =\mathcal{B}_{u}^{\;\blockShuffle}\blockShuffle\mathcal{B}_{v}^{\;\blockShuffle}\\
        &=\diag(\mathbf{u}_1,\dots,\mathbf{u}_\ell)\blockShuffle\diag(\mathbf{u}_{\ell+1},\dots,\mathbf{u}_{n})\end{align*}
        thus with 
        a basic fact \cite[Thm.~6.2.2]{grinberg2020hopf} about Lyndon words, \begin{align*}\lm(\mathcal{B}_{{\mathbf{u}}_1\dots\mathbf{u}_n}^{\;\blockShuffle})
        &=\lm(\diag(\mathbf{u}_1,\dots,\mathbf{u}_\ell)\blockShuffle\diag(\mathbf{u}_{\ell+1},\dots,\mathbf{u}_n))\\
        &=\diag(\mathbf{u}_1,\dots,\mathbf{u}_n).
        \end{align*}
        \item In the remaining cases $m\in\{\shuffle,\qShuffle\}$ we use  $p-1$ times \Cref{cor:lm_sh_qSh_blockSh}, and also part~\ref{proof:lmBa_case1_blockShuffle}.,
        that is  \begin{align*}
        \lm(\mathcal{B}_{{\mathbf{u}}_1\dots\mathbf{u}_n}^{m})=
        \lm(\mathcal{B}_{{\mathbf{u}}_1\dots\mathbf{u}_n}^{\;\blockShuffle})
        =
        \diag(\mathbf{u}_1,\dots,\mathbf{u}_n). 
        \end{align*}
    \end{enumerate}
\end{proof}

%

\begin{corollary}\label{lem:bases}
Via the $\groundRing$-algebras $(H,m,+)$ with $m\in\{\blockShuffle,\shuffle,\qShuffle\}$ we obtain three distinct $\groundRing$-bases ${(\mathcal{B}^{m}_{\mathbf{a}})}_{\mathbf{a}\in\composition}$ for the underlying $\groundRing$-module $(H,+)$. 
\end{corollary}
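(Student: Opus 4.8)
The plan is to deduce this from \Cref{lem:lmBa_is_a} via the standard ``unitriangularity'' argument. Fix one of the products $m\in\{\blockShuffle,\shuffle,\qShuffle\}$. By \Cref{lem:lmBa_is_a}, for every $\mathbf{a}\in\composition$ the element $\mathcal{B}^m_{\mathbf{a}}\in H$ has leading monomial $\lm(\mathcal{B}^m_{\mathbf{a}})=\mathbf{a}$ with respect to the $\len$-compatible monomial order $<_{\grlex}$. Writing $\mathcal{B}^m_{\mathbf{a}}=\mathbf{a}+\sum_{\mathbf{b}<_{\grlex}\mathbf{a}}c_{\mathbf{a},\mathbf{b}}\,\mathbf{b}$ (a \emph{finite} sum, since each $\mathcal{B}^m_{\mathbf{a}}$ is a finite $\groundRing$-linear combination of basis elements and all of them are $\leq_{\grlex}\mathbf{a}$), the transition matrix from the basis ${(\mathbf{a})}_{\mathbf{a}\in\composition}$ of $(H,+)$ to the family ${(\mathcal{B}^m_{\mathbf{a}})}_{\mathbf{a}\in\composition}$ is lower unitriangular with respect to $<_{\grlex}$. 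Since $<_{\grlex}$ is $\len$-compatible, on each graded piece (or on each finite down-set of $\composition$) this matrix is an honest finite unitriangular matrix over $\groundRing$, hence invertible over $\groundRing$; therefore ${(\mathcal{B}^m_{\mathbf{a}})}_{\mathbf{a}\in\composition}$ is again a $\groundRing$-basis of $(H,+)$.

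More precisely, I would first recall that $H=\bigoplus_{\mathbf{a}\in\composition}\groundRing\mathbf{a}$ is graded with finite-dimensional homogeneous components, and that $<_{\grlex}$ refines the grading (elements of strictly smaller degree are strictly smaller, and within a fixed degree there are only finitely many compositions). Then I would argue by induction along $<_{\grlex}$, or equivalently degree by degree: spanning follows because $\mathbf{a}=\mathcal{B}^m_{\mathbf{a}}-\sum_{\mathbf{b}<_{\grlex}\mathbf{a}}c_{\mathbf{a},\mathbf{b}}\,\mathbf{b}$ expresses every $\mathbf{a}$ in terms of the $\mathcal{B}^m_{\mathbf{b}}$'s with $\mathbf{b}\leq_{\grlex}\mathbf{a}$ (using the inductive hypothesis for the lower terms), and $\groundRing$-linear independence follows because a nontrivial finite relation $\sum_{\mathbf{a}\in S}\lambda_\mathbf{a}\mathcal{B}^m_{\mathbf{a}}=0$ with $S$ finite and $\lambda_{\mathbf{a}_0}\neq 0$ for the $<_{\grlex}$-maximal index $\mathbf{a}_0$ in $S$ would have nonzero coefficient $\lambda_{\mathbf{a}_0}$ on the basis element $\mathbf{a}_0$, a contradiction. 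That these three bases are pairwise \emph{distinct} is witnessed, for instance, by the examples already computed in the text (e.g.\ $\mathcal{B}^{\blockShuffle}_u\neq\mathcal{B}^{\shuffle}_u$ and $\neq\mathcal{B}^{\qShuffle}_u$ for suitable $u$ of length $2$, since $\blockShuffle$, $\shuffle$, $\qShuffle$ disagree already on two connected letters).

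The only genuine subtlety — and the one place I would be careful — is the finiteness/well-foundedness bookkeeping: one must know that each $\mathcal{B}^m_{\mathbf{a}}$ is supported on finitely many compositions all of which are $<_{\grlex}\mathbf{a}$ except $\mathbf{a}$ itself, so that ``lower unitriangular'' really means a finite unitriangular matrix in each bounded range and can be inverted over the ring $\groundRing$ without any completion. This is immediate here because $m$ is a graded product, $\mathcal{B}^m_{\mathbf{a}}$ is homogeneous of the same degree as $\mathbf{a}$ (or at least supported in degrees $\leq$ that of $\mathbf{a}$), and each homogeneous component of $H$ is a finite free $\groundRing$-module; combined with the $\len$-compatibility of $<_{\grlex}$, the unitriangularity is with respect to a well-order on each finite level. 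With this observed, the corollary is the classical ``triangular family with unit diagonal is a basis'' lemma applied levelwise, and no further computation is needed.
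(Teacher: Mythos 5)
Your argument is the same one the paper uses: \Cref{lem:lmBa_is_a} gives triangularity of the family ${(\mathcal{B}^m_{\mathbf{a}})}_{\mathbf{a}\in\composition}$ against the monomial basis $\composition$ with respect to the $\len$-compatible order $<_{\grlex}$, and the finiteness bookkeeping you worry about is exactly the right point to check. One small correction: the transition matrix is \emph{not} unitriangular. \Cref{lem:lmBa_is_a} only controls the leading \emph{monomial}, and the leading coefficient of $\mathcal{B}^m_{\mathbf{a}}$ is in general a multinomial coefficient counting repeated Lyndon factors in the CFL factorization, not $1$; for instance
\[
\mathcal{B}^{\;\blockShuffle}_{\mathbf{a}}=\begin{bmatrix}\w{1}\end{bmatrix}\blockShuffle\begin{bmatrix}\w{1}\end{bmatrix}=2\begin{bmatrix}\w{1}&\varepsilon\\\varepsilon&\w{1}\end{bmatrix}=2\mathbf{a}
\quad\text{for}\quad \mathbf{a}=\begin{bmatrix}\w{1}&\varepsilon\\\varepsilon&\w{1}\end{bmatrix},
\]
consistent with the entry $\frac12\begin{bmatrix}\w{1}\end{bmatrix}\blockShuffle\begin{bmatrix}\w{1}\end{bmatrix}$ in \Cref{tab:gens_in_block_shuffles}. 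The argument survives because these diagonal entries are positive integers, hence units in the commutative $\Q$-algebra $\groundRing$, so the matrix is still invertibly triangular (this is also where the standing hypothesis that $\groundRing$ contains $\Q$ is genuinely used). With ``unitriangular'' replaced by ``triangular with unit diagonal'' your proof is complete and matches the paper's.
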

\begin{proof}
We know that 
    $\composition$ is a $\groundRing$-basis of $H$. 
\Cref{lem:lmBa_is_a} implies that ${(\mathcal{B}^m_{\mathbf{a}})}_{\mathbf{a}\in\composition}$ expands invertibly triangularly with respect to 
 $\composition$ for every $m$so it is a $\groundRing$-basis, too.  
\end{proof}

We recall another fact about Lyndon words.

\begin{lemma}\label{lem:grinberg}  
    The family ${(\mathcal{B}_w)}_{w\in\mathfrak{L}}$  freely generates $(H,m,+)$ if and only if ${(\mathcal{B}^m_{\mathbf{a}})}_{\mathbf{a}\in\composition}$ is a $\groundRing$-basis of $(H,+)$. 
\end{lemma}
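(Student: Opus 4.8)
The plan is to recognize this as a general fact about commutative algebras graded-filtered by a free monoid, and to reduce it to the statement that $\mathcal{B}^m$ is nothing but the canonical "product over the CFL factorization" map applied to the Lyndon generators $\mathcal{B}_w$. Concretely, if one writes $\mathfrak{L}$ for the Lyndon words and lets $\groundRing[\mathfrak{L}]$ denote the free commutative $\groundRing$-algebra on the set $\mathfrak{L}$, then a commutative monomial $w_1^{e_1}\cdots w_r^{e_r}$ (with $w_1>_{\lex}\cdots>_{\lex}w_r$) corresponds exactly to the word whose CFL factorization is $(\underbrace{w_1,\dots,w_1}_{e_1},\dots,\underbrace{w_r,\dots,w_r}_{e_r})$ --- this is the classical bijection between commutative monomials on $\mathfrak{L}$ and all words in $\compositionConnected^*$ (equivalently $\composition$) coming from unique CFL factorization, cited already in the excerpt as \cite[Thm.~6.1.27]{grinberg2020hopf}. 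Under this bijection, $\mathcal{B}^m_{\mathbf{a}} = m(\mathcal{B}_{a_1},\dots,\mathcal{B}_{a_p})$ is precisely the image of the corresponding monomial under the unique $\groundRing$-algebra homomorphism $\varphi_m\colon \groundRing[\mathfrak{L}]\to (H,m,+)$ sending the generator $w\mapsto \mathcal{B}_w$.

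With this setup the proof is immediate and purely formal. First I would record the bijection $\composition \leftrightarrow \{\text{commutative monomials on }\mathfrak{L}\}$, noting it sends $\mathbf{a}$ (with CFL factorization $(a_1,\dots,a_p)$) to $\prod_i a_i$, and that $\groundRing[\mathfrak{L}]$ has $\groundRing$-basis $\{\prod_i a_i\}$ indexed by these monomials. Second, I would invoke the universal property of the free commutative $\groundRing$-algebra to get the homomorphism $\varphi_m$ with $\varphi_m(w)=\mathcal{B}_w$, and observe that $\varphi_m$ sends the basis monomial indexed by $\mathbf{a}$ to $m(\mathcal{B}_{a_1},\dots,\mathcal{B}_{a_p})=\mathcal{B}^m_{\mathbf{a}}$ (here commutativity of $m$ is what makes the unordered product well-defined and matches $m$ applied in CFL order; the empty word maps to $\ec=1_H$). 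Third: ${(\mathcal{B}_w)}_{w\in\mathfrak{L}}$ freely generates $(H,m,+)$ means exactly that $\varphi_m$ is an isomorphism of $\groundRing$-algebras; since $\varphi_m$ is always a surjective-or-not algebra map between $\groundRing$-modules with distinguished bases indexed by $\composition$, it is an isomorphism if and only if it is an isomorphism of $\groundRing$-modules, i.e.\ iff it carries the basis $\{\prod_i a_i\}_{\mathbf{a}}$ of $\groundRing[\mathfrak{L}]$ to a $\groundRing$-basis of $H$, i.e.\ iff ${(\mathcal{B}^m_{\mathbf{a}})}_{\mathbf{a}\in\composition}$ is a $\groundRing$-basis of $(H,+)$. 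That is the claimed equivalence.

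One small point I would be careful about: "freely generates" must be interpreted as "freely generates as a commutative $\groundRing$-algebra" (all three products $\blockShuffle,\shuffle,\qShuffle$ are commutative, so this is the right notion), and the universal property used is that of the \emph{free commutative} $\groundRing$-algebra on a set, whose underlying module is free with basis the commutative monomials. I would also note explicitly that $\varphi_m$ being an algebra homomorphism is automatic from the universal property regardless of whether it is bijective, so the only content in both directions is the module-level bijectivity, which is governed entirely by whether the images of the monomial basis form a basis.

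The main obstacle, such as it is, is not a genuine mathematical difficulty but a bookkeeping one: making the identification between the CFL-indexed words in $\composition$ and the monomial basis of $\groundRing[\mathfrak{L}]$ fully precise, and checking that $\mathcal{B}^m$ from \Cref{def:B_words} really is the image of that monomial basis under $\varphi_m$ (as opposed to some reindexing). Once the dictionary "word $\leftrightarrow$ CFL multiset $\leftrightarrow$ commutative monomial" is set down, everything collapses to the standard observation that a homomorphism out of a free object is an isomorphism iff it sends a basis to a basis; essentially no computation is required, and in particular \Cref{lem:lmBa_is_a} and \Cref{lem:bases} are not needed for this lemma (they are used elsewhere, to verify the hypothesis of this lemma for the specific orders chosen).
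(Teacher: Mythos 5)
Your proposal is correct and is essentially the standard argument behind the result the paper simply cites (\cite[Lem.~6.3.7]{grinberg2020hopf}): identify $\composition$ with the commutative monomials on $\mathfrak{L}$ via unique CFL factorization, realize $\mathcal{B}^m$ as the image of that monomial basis under the canonical map $\groundRing[\mathfrak{L}]\to(H,m,+)$, and use that a linear map out of a free module is bijective iff it sends a basis to a basis. You are also right that \Cref{lem:lmBa_is_a} and \Cref{lem:bases} play no role here; they are used afterwards to verify the basis condition.
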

\begin{proof}
Consider for instance  \cite[Lem.~6.3.7]{grinberg2020hopf}. 
\end{proof}

\begin{theorem}\label{thm:iso}
The family ${(\mathcal{B}_w)}_{w\in\mathfrak{L}}$ 
freely generates the three  $\groundRing$-algebras $(H,\blockShuffle,+)$,   $(H,\shuffle,+)$ and   $(H,\qShuffle,+)$. 
\end{theorem}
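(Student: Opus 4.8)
The plan is to combine the three facts the excerpt has carefully assembled. The key observation is that \Cref{lem:grinberg} reduces "freely generates $(H,m,+)$" to the purely module-theoretic statement that ${(\mathcal{B}^m_{\mathbf{a}})}_{\mathbf{a}\in\composition}$ is a $\groundRing$-basis of $(H,+)$. But that statement is exactly \Cref{lem:bases}, established for all three products $m\in\{\blockShuffle,\shuffle,\qShuffle\}$ simultaneously. So the proof is essentially a two-line invocation: apply \Cref{lem:grinberg} once for each of the three products, using \Cref{lem:bases} to supply its hypothesis in each case.

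Concretely, I would argue as follows. Fix $m\in\{\blockShuffle,\shuffle,\qShuffle\}$; by \Cref{lem:bases} the family ${(\mathcal{B}^m_{\mathbf{a}})}_{\mathbf{a}\in\composition}$ is a $\groundRing$-basis of the $\groundRing$-module $(H,+)$. By \Cref{lem:grinberg}, this is equivalent to ${(\mathcal{B}_w)}_{w\in\mathfrak{L}}$ freely generating the $\groundRing$-algebra $(H,m,+)$. Since $m$ was arbitrary in the three-element set, all three algebras are freely generated by ${(\mathcal{B}_w)}_{w\in\mathfrak{L}}$. One should perhaps also recall explicitly why $(H,m,+)$ is a commutative $\groundRing$-algebra so that \Cref{def:B_words} and \Cref{lem:grinberg} apply: for $\blockShuffle$ this is \Cref{thm:1paramHopfAlg}, for $\shuffle$ it is the unnamed theorem after \Cref{def:twodim_qshuffle}, and for $\qShuffle$ it is \cite[Thm.~2.12]{DS23} as recalled in the text.

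There is really no main obstacle at the level of \Cref{thm:iso} itself — the theorem is a clean corollary of the preceding development, and the genuine work (the triangularity argument of \Cref{lem:lmBa_is_a}, resting on \Cref{cor:lm_sh_qSh_blockSh} and hence on \Cref{lem:block_shuffle_and_2param_shuffle,lem:block_shuffle_and_2param_qShuffle}) has already been carried out. If I were to flag one point deserving a sentence of care, it is making sure the notion of "free generation" used here matches the hypothesis of \cite[Lem.~6.3.7]{grinberg2020hopf}: one wants that the canonical $\groundRing$-algebra morphism from the free commutative $\groundRing$-algebra (polynomial ring) on the set $\mathfrak{L}$ into $(H,m,+)$, sending the generator indexed by $w$ to $\mathcal{B}_w$, is an isomorphism — equivalently, that the monomials $\mathcal{B}_{a_1}\cdots\mathcal{B}_{a_p}$ over weakly $\lex$-decreasing Lyndon tuples form a $\groundRing$-basis, which is precisely ${(\mathcal{B}^m_{\mathbf{a}})}_{\mathbf{a}\in\composition}$ under the CFL bijection between $\composition$ and such tuples. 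With that identification spelled out, the proof is complete.

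\begin{proof}
Each of $(H,\blockShuffle,+)$, $(H,\shuffle,+)$, $(H,\qShuffle,+)$ is a commutative $\groundRing$-algebra (see \Cref{thm:1paramHopfAlg}, the Hopf algebra theorem following \Cref{def:twodim_qshuffle}, and \cite[Thm.~2.12]{DS23}, respectively), so \Cref{def:B_words} applies in each. Fix $m\in\{\blockShuffle,\shuffle,\qShuffle\}$. By \Cref{lem:bases}, the family ${(\mathcal{B}^m_{\mathbf{a}})}_{\mathbf{a}\in\composition}$ is a $\groundRing$-basis of the $\groundRing$-module $(H,+)$. Under the identification of $\composition$ with $\compositionConnected^*$ from \Cref{rem:identificationdiagAndStringCon} and the CFL factorization, the elements $\mathbf{a}\in\composition$ are in bijection with weakly $\lex$-decreasing tuples $(a_1,\dots,a_p)$ of Lyndon words, and $\mathcal{B}^m_{\mathbf{a}}=m(\mathcal{B}_{a_1},\dots,\mathcal{B}_{a_p})$ is the corresponding monomial in the $\mathcal{B}_w$. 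Hence \Cref{lem:grinberg} (i.e.\ \cite[Lem.~6.3.7]{grinberg2020hopf}) applies and shows that ${(\mathcal{B}_w)}_{w\in\mathfrak{L}}$ freely generates $(H,m,+)$. As $m\in\{\blockShuffle,\shuffle,\qShuffle\}$ was arbitrary, the claim follows for all three algebras.
\end{proof}
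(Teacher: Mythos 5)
Your proposal is correct and follows exactly the paper's own proof, which likewise combines \Cref{lem:bases} with \Cref{lem:grinberg}. The extra remarks on commutativity and on matching the notion of free generation to the CFL-indexed monomial basis are sensible elaborations but do not change the argument.
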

\begin{proof}
Follows with \Cref{lem:bases} and \Cref{lem:grinberg}.
\end{proof}

Tables \ref{tab:gens_in_block_shuffles} and \ref{tab:gens_in_shuffles}
provide examples of non-trivial expressions in terms of their generators. 
One way of producing these is by using \Cref{lem:bases} and $<_{\grlex}$ to subtract leading terms by  ${\mathcal{B}}_u^m$ when considered as monomials in ${\mathcal{B}}_w$ for $w\in\mathfrak{L}$.

\begin{table}[ht]
\begin{small}
    \centering
    \begin{tabular}{r|l}
    $\composition$ & $(H,\blockShuffle,+)$ in terms of its free generators  $(\mathcal{B}_{w})_{w\in\mathfrak{L}}$\\
    \hline\\
       $\begin{bmatrix}
           \w{2}&\varepsilon\\
           \varepsilon&\w{1}
       \end{bmatrix}$
       &  
       $\begin{bmatrix}
           \w{1}
       \end{bmatrix}\blockShuffle\begin{bmatrix}
           \w{2}
       \end{bmatrix}
       -\begin{bmatrix}
           \w{1}&\varepsilon\\
           \varepsilon&\w{2}
       \end{bmatrix}$
       \\\\
$\begin{bmatrix}
           \w{1}&\varepsilon\\
           \varepsilon&\w{1}
       \end{bmatrix}$
       &
       $\frac12\begin{bmatrix}
           \w{1}
       \end{bmatrix}\blockShuffle\begin{bmatrix}
           \w{1}
       \end{bmatrix}
       $
       \\\\
       $\begin{bmatrix}
           \w{1} & \w{2} & \varepsilon\\
          \varepsilon&\varepsilon& \w{3}
       \end{bmatrix}$  
       & 
       $\begin{bmatrix}
           \w{1} & \w{2}
       \end{bmatrix}\blockShuffle\begin{bmatrix}
           \w{3}
       \end{bmatrix}
       -\begin{bmatrix}
           \w{3}&\varepsilon&\varepsilon\\
           \varepsilon&\w{1}&\w{2}
       \end{bmatrix}$
       \\\\
       $\begin{bmatrix}
          \varepsilon  & \w{1} & \varepsilon\\
        \w{1}  & \varepsilon & \varepsilon\\
          \varepsilon&\varepsilon& \w{1}
       \end{bmatrix}$  
       & 
       $\begin{bmatrix}
           \varepsilon &\w{1}\\
            \w{1} & \varepsilon
       \end{bmatrix}\blockShuffle\begin{bmatrix}
           \w{1} 
       \end{bmatrix}
       -
       \begin{bmatrix}
           \w{1} & \varepsilon & \varepsilon\\ \varepsilon& \varepsilon& \w{1} \\
          \varepsilon& \w{1}&\varepsilon
       \end{bmatrix}$
       \\\\
       $\begin{bmatrix}
           \w{2} & \varepsilon & \varepsilon\\ \varepsilon& \w{3} & \varepsilon\\
          \varepsilon&\varepsilon& \w{1}
       \end{bmatrix}$  
       & 
       $\begin{bmatrix}
           \w{2} &\varepsilon\\\varepsilon& \w{3}
       \end{bmatrix}\blockShuffle\begin{bmatrix}
           \w{1} 
       \end{bmatrix}
       -
       \begin{bmatrix}
           \w{2} 
       \end{bmatrix}\blockShuffle\begin{bmatrix}
           \w{1} &\varepsilon\\\varepsilon& \w{3}
       \end{bmatrix}
       +
       \begin{bmatrix}
           \w{1} & \varepsilon & \varepsilon\\ \varepsilon& \w{3} & \varepsilon\\
          \varepsilon&\varepsilon& \w{2}
       \end{bmatrix}$
       \\\\
       $\begin{bmatrix}
           \w{3} & \varepsilon & \varepsilon\\ \varepsilon& \w{1} & \varepsilon\\
          \varepsilon&\varepsilon& \w{2}
       \end{bmatrix}$  
       & 
       $\begin{bmatrix}
           \w{3} 
       \end{bmatrix}\blockShuffle\begin{bmatrix}
           \w{1} &\varepsilon\\\varepsilon& \w{2}
       \end{bmatrix}
       -
       \begin{bmatrix}
           \w{1} & \varepsilon & \varepsilon\\ \varepsilon& \w{2} & \varepsilon\\
          \varepsilon&\varepsilon& \w{3}
       \end{bmatrix}
       -
       \begin{bmatrix}
           \w{1} & \varepsilon & \varepsilon\\ \varepsilon& \w{3} & \varepsilon\\
          \varepsilon&\varepsilon& \w{2}
       \end{bmatrix}$
       \\\\
       $\begin{bmatrix}
           \w{3} & \varepsilon & \varepsilon\\ \varepsilon& \w{2} & \varepsilon\\
          \varepsilon&\varepsilon& \w{1}
       \end{bmatrix}$  
       & 
       $
       \begin{bmatrix}
           \w{1} 
       \end{bmatrix}\blockShuffle\begin{bmatrix}
           \w{2} 
       \end{bmatrix}\blockShuffle\begin{bmatrix}
           \w{3} 
       \end{bmatrix}
       -
       \begin{bmatrix}
           \w{2} &\varepsilon\\\varepsilon& \w{3}
       \end{bmatrix}\blockShuffle\begin{bmatrix}
           \w{1} 
       \end{bmatrix}
       -
       \begin{bmatrix}
           \w{3} 
       \end{bmatrix}\blockShuffle\begin{bmatrix}
           \w{1} &\varepsilon\\\varepsilon& \w{2}
       \end{bmatrix}
       +
       \begin{bmatrix}
           \w{1} & \varepsilon & \varepsilon\\ \varepsilon& \w{2} & \varepsilon\\
          \varepsilon&\varepsilon& \w{3}
       \end{bmatrix}$
       \\\\
       $\begin{bmatrix}
           \w{1} & \varepsilon & \varepsilon\\ \varepsilon& \w{2} & \varepsilon\\
          \varepsilon&\varepsilon& \w{1}
       \end{bmatrix}$  
       & 
       $\begin{bmatrix}
           \w{1} &\varepsilon\\\varepsilon& \w{2}
       \end{bmatrix}\blockShuffle\begin{bmatrix}
           \w{1} 
       \end{bmatrix}
       -2
       \begin{bmatrix}
           \w{1} & \varepsilon & \varepsilon\\ \varepsilon& \w{1} & \varepsilon\\
          \varepsilon&\varepsilon& \w{2}
       \end{bmatrix}$
    \end{tabular}
    \caption{Block shuffles}
    \label{tab:gens_in_block_shuffles}
    \end{small}
\end{table}

\begin{table}[ht]
    \centering
    \begin{small}
    \begin{tabular}{r|l}
    $\composition$ & $(H,\shuffle,+)$ in terms of its free generators  $(\mathcal{B}_{w})_{w\in\mathfrak{L}}$\\
    \hline\\
       $\begin{bmatrix}
           \w{2}&\varepsilon\\
           \varepsilon&\w{1}
       \end{bmatrix}$
       &  
       $\begin{bmatrix}
           \w{1}
       \end{bmatrix}\shuffle\begin{bmatrix}
           \w{2}
       \end{bmatrix}
       -\begin{bmatrix}
           \w{1}&\varepsilon\\
           \varepsilon&\w{2}
       \end{bmatrix}
       -\begin{bmatrix}
           \varepsilon&\w{1}\\
           \w{2}&\varepsilon
       \end{bmatrix}
       -\begin{bmatrix}
           \varepsilon&\w{2}\\
           \w{1}&\varepsilon
       \end{bmatrix}$
       \\\\
$\begin{bmatrix}
           \w{1}&\varepsilon\\
           \varepsilon&\w{1}
       \end{bmatrix}$
       &
       $\frac12\begin{bmatrix}
           \w{1}
       \end{bmatrix}\shuffle\begin{bmatrix}
           \w{1}
       \end{bmatrix} -
       \begin{bmatrix}
           \varepsilon&\w{1}\\
           \w{1}&\varepsilon
       \end{bmatrix}
       $
       \\\\
       $\begin{bmatrix}
           \w{1} & \w{2} & \varepsilon\\
          \varepsilon&\varepsilon& \w{3}
       \end{bmatrix}$  
       & 
       $\begin{bmatrix}
           \w{1} & \w{2}
       \end{bmatrix}\shuffle\begin{bmatrix}
           \w{3}
       \end{bmatrix}
       -
       \begin{bmatrix}
           \w{3}&\varepsilon&\varepsilon\\
           \varepsilon&\w{1}&\w{2}
       \end{bmatrix}
       - 
\begin{bmatrix}\w{1}&\varepsilon&\w{2}\\
    \varepsilon&\w{3}&\varepsilon\end{bmatrix}
-
\begin{bmatrix}\varepsilon&\varepsilon&\w{3}\\
    \w{1}&\w{2}&\varepsilon\end{bmatrix}
    -
    \begin{bmatrix}\varepsilon&\w{3}&\varepsilon\\
    \w{1}&\varepsilon&\w{2}\end{bmatrix}
    -
    \begin{bmatrix}
    \varepsilon&\w{1}&\w{2}\\\w{3}&\varepsilon&\varepsilon
    \end{bmatrix}$
    \\\\
       $\begin{bmatrix}
          \varepsilon  & \w{1} & \varepsilon\\
        \w{1}  & \varepsilon & \varepsilon\\
          \varepsilon&\varepsilon& \w{1}
       \end{bmatrix}$  
       & 
       $\begin{bmatrix}
           \varepsilon &\w{1}\\
            \w{1} & \varepsilon
       \end{bmatrix}\shuffle\begin{bmatrix}
           \w{1} 
       \end{bmatrix}
       -
       \begin{bmatrix}
           \w{1} & \varepsilon & \varepsilon\\ \varepsilon& \varepsilon& \w{1} \\
          \varepsilon& \w{1}&\varepsilon
       \end{bmatrix}
       -2
       \begin{bmatrix}
           \varepsilon    &\varepsilon&\w{1}\\
           \w{1}&\varepsilon& \varepsilon  \\
          \varepsilon&\w{1}& \varepsilon
       \end{bmatrix}
       -2
       \begin{bmatrix}
           \varepsilon  &\w{1}  &\varepsilon\\
           \varepsilon& \varepsilon &\w{1} \\
          \w{1}& \varepsilon&\varepsilon
       \end{bmatrix}
              -3
       \begin{bmatrix}
            \varepsilon & \varepsilon&\w{1} \\ \varepsilon& \w{1}& \varepsilon \\
          \w{1}&\varepsilon& \varepsilon
       \end{bmatrix}$
    \end{tabular}
    \caption{Two-parameter shuffles}
    \label{tab:gens_in_shuffles}
    \end{small}
\end{table}

\subsection{Isomorphisms due to Lyndon generators}

\begin{corollary}\label{cor:isos_lynon}
    We have $(H,\blockShuffle,+)\cong(H,\shuffle,+)\cong(H,\qShuffle,+)$ as $\groundRing$-algebras. 
\end{corollary}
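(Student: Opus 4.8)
The plan is to invoke the universal property of free commutative algebras. By \Cref{thm:iso}, each of the three $\groundRing$-algebras $(H,\blockShuffle,+)$, $(H,\shuffle,+)$ and $(H,\qShuffle,+)$ is freely generated, as a commutative $\groundRing$-algebra, by one and the same indexed family ${(\mathcal{B}_w)}_{w\in\mathfrak{L}}$; here it is crucial that by \Cref{eq:def_B_lyndon} the element $\mathcal{B}_w$ is defined purely combinatorially from the Lyndon word $w$, independently of which of the three products we equip $H$ with. Consequently each of the three algebras is isomorphic to the polynomial $\groundRing$-algebra $\groundRing[X_w : w\in\mathfrak{L}]$ on indeterminates indexed by $\mathfrak{L}$, via the unique algebra homomorphism $X_w\mapsto\mathcal{B}_w$, and composing these identifications yields the asserted chain of isomorphisms.

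Spelled out for, say, $(H,\shuffle,+)\cong(H,\blockShuffle,+)$: let $\Psi\colon(H,\shuffle,+)\to(H,\blockShuffle,+)$ be the unique $\groundRing$-algebra homomorphism determined by $\Psi(\mathcal{B}_w):=\mathcal{B}_w$ for all $w\in\mathfrak{L}$, which exists and is unique since the domain is free on ${(\mathcal{B}_w)}_{w\in\mathfrak{L}}$. Freeness of the target likewise provides $\Psi'\colon(H,\blockShuffle,+)\to(H,\shuffle,+)$ with $\Psi'(\mathcal{B}_w)=\mathcal{B}_w$. The composites $\Psi'\circ\Psi$ and $\Psi\circ\Psi'$ are algebra endomorphisms fixing every free generator $\mathcal{B}_w$, hence, by the uniqueness clause of the universal property, equal to the respective identities; thus $\Psi$ is an isomorphism. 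Repeating the argument with $\blockShuffle$ replaced by $\qShuffle$ and composing gives $(H,\shuffle,+)\cong(H,\qShuffle,+)$ as well.

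There is essentially no obstacle here beyond bookkeeping: one must read \Cref{thm:iso} in the commutative category — all three products are commutative, so ``free'' means ``polynomial algebra'' — and one must keep track of the fact that the generating family is literally the same in all three cases. It is worth flagging that the isomorphism $\Psi$ so produced is merely an isomorphism of $\groundRing$-algebras; it is visibly \emph{not} the identity of the $\groundRing$-module $H$ (compare Tables~\ref{tab:gens_in_block_shuffles} and \ref{tab:gens_in_shuffles}), and nothing here asserts compatibility with the coproduct $\Delta$. A canonical, coalgebra-compatible comparison of $\shuffle$ and $\qShuffle$ is a strictly finer statement, supplied separately by the two-parameter Hoffman isomorphism of \Cref{thm_iso_sh_qsh}.
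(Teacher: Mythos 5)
Your proposal is correct and follows essentially the same route as the paper: both invoke \Cref{thm:iso} and the universal property of the free commutative algebra on ${(\mathcal{B}_w)}_{w\in\mathfrak{L}}$ to define $\Psi(\mathcal{B}_w):=\mathcal{B}_w$ and its inverse analogously. Your version merely spells out the uniqueness argument for the composites being the identity, which the paper leaves implicit.
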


In \Cref{thm_iso_sh_qsh,thm:isos_Nikolas_Magic}  we lift this statement even to the level of Hopf algebras. 
Note that the explicit mappings constructed in  \Cref{cor:isos_lynon} are not compatible with $\Delta$, and thus, they cannot be considered as isomorphisms of Hopf algebras. 

\begin{proof}[Proof of \Cref{cor:isos_lynon}.]
With the universal property of $(H,\blockShuffle,+)$, we obtain an isomorphism of $\groundRing$-algebras $\Psi$  to  $(H,\shuffle,+)$ by setting $\Psi(\mathcal{B}_w):=\mathcal{B}_w$ for every $w\in \mathfrak{L}$. Its inverse is defined analogously via $\Psi^{-1}(\mathcal{B}_w)=\mathcal{B}_w$. 
    The remaining isomorphisms are constructed in the same way. 
\end{proof}

In the following, we illustrate one of the isomorphisms from \Cref{cor:isos_lynon}.

\begin{example}\label{ex:LyndonIsos}
Let $\Psi$ be the $\groundRing$-algebra isomorphism from $(H,\blockShuffle,+)$ to $(H,\shuffle,+)$. 
Lyndon generators are mapped to each other, e.g.
\begin{align*}\Psi(\begin{bmatrix}\w{1}\end{bmatrix})
&=\begin{bmatrix}\w{1}\end{bmatrix}
\quad\text{ or }\quad
  \Psi(
\begin{bmatrix}\w{1}&\varepsilon\\
    \varepsilon&\w{2}\end{bmatrix})=
    \begin{bmatrix}\w{1}&\varepsilon\\
    \varepsilon&\w{2}\end{bmatrix}.
    \end{align*}
    Otherwise we write compositions in terms of their generators and apply $\Psi$ such that it respects sums and products, e.g., 
    \begin{align*}
    \Psi(
\begin{bmatrix}\w{2}&\varepsilon\\
    \varepsilon&\w{1}\end{bmatrix})
    &=\Psi(
    \begin{bmatrix}\w{1}\end{bmatrix}\blockShuffle\begin{bmatrix}\w{2}\end{bmatrix}
    -\begin{bmatrix}\w{1}&\varepsilon\\
    \varepsilon&\w{2}\end{bmatrix}
    )
    =\Psi(
    \begin{bmatrix}\w{1}\end{bmatrix})\shuffle\Psi(\begin{bmatrix}\w{2}\end{bmatrix})
    -\Psi(\begin{bmatrix}\w{1}&\varepsilon\\
    \varepsilon&\w{2}\end{bmatrix}
    )
    \\
    &=
    \begin{bmatrix}\w{1}\end{bmatrix}\shuffle\begin{bmatrix}\w{2}\end{bmatrix}
    -\begin{bmatrix}\w{1}&\varepsilon\\
    \varepsilon&\w{2}\end{bmatrix}
    =
    \begin{bmatrix}\w{2}&\varepsilon\\
    \varepsilon&\w{1}\end{bmatrix}
    +
    \begin{bmatrix}\varepsilon&\w{2}\\
    \w{1}&\varepsilon\end{bmatrix}
    +
    \begin{bmatrix}\varepsilon&\w{1}\\
    \w{2}&\varepsilon\end{bmatrix}\end{align*}
    
    or 
        \begin{align*}
\Psi(\begin{bmatrix}\w{3}&\varepsilon&\varepsilon\\
    \varepsilon&\w{1}&\w{2}\end{bmatrix})
       &=\Psi(\begin{bmatrix}\w{1}&\w{2}\end{bmatrix}
       \blockShuffle\begin{bmatrix}\w{3}\end{bmatrix}
       -\begin{bmatrix}\w{1}&\w{2}&\varepsilon\\
    \varepsilon&\varepsilon&\w{3}\end{bmatrix})
    \\
    &=\Psi(\begin{bmatrix}\w{1}&\w{2}\end{bmatrix})
       \shuffle\Psi(\begin{bmatrix}\w{3}\end{bmatrix})
       -\Psi(\begin{bmatrix}\w{1}&\w{2}&\varepsilon\\
    \varepsilon&\varepsilon&\w{3}\end{bmatrix})\\
    &=\begin{bmatrix}\w{1}&\w{2}\end{bmatrix}
       \shuffle\begin{bmatrix}\w{3}\end{bmatrix}
       -\begin{bmatrix}\w{1}&\w{2}&\varepsilon\\
    \varepsilon&\varepsilon&\w{3}\end{bmatrix}\\
    &=
    \begin{bmatrix}\w{3}&\varepsilon&\varepsilon\\
    \varepsilon&\w{1}&\w{2}\end{bmatrix}
    +
    \begin{bmatrix}\w{1}&\varepsilon&\w{2}\\
    \varepsilon&\w{3}&\varepsilon\end{bmatrix}
+
\begin{bmatrix}\varepsilon&\varepsilon&\w{3}\\
    \w{1}&\w{2}&\varepsilon\end{bmatrix}
    +
    \begin{bmatrix}\varepsilon&\w{3}&\varepsilon\\
    \w{1}&\varepsilon&\w{2}\end{bmatrix}
    +
    \begin{bmatrix}
    \varepsilon&\w{1}&\w{2}\\\w{3}&\varepsilon&\varepsilon
    \end{bmatrix}.
    \end{align*}

\end{example}

However, an isomorphism of Hopf algebras is not obtained with this construction, as illustrated in the following.

\begin{example}
    We continue in the framework of \Cref{ex:LyndonIsos}, i.e.,  let $\Psi$ be the $\groundRing$-algebra isomorphism from $(H,\blockShuffle,+)$ to $(H,\shuffle,+)$.
    Then, with 
    $$\begin{bmatrix}
            \w{1}\end{bmatrix},
            \begin{bmatrix}
            \w{2}\end{bmatrix},
            \begin{bmatrix}
            \w{1}\end{bmatrix}\begin{bmatrix}
            \w{2}\end{bmatrix},
            \begin{bmatrix}
            \w{1}\end{bmatrix}\begin{bmatrix}
            \w{1}\end{bmatrix}\begin{bmatrix}
            \w{2}\end{bmatrix}\in\mathfrak{L}, 
            $$
 and  \Cref{eq:def_B_lyndon},  we have 
    \begin{align*}
        (\Psi\otimes\Psi)&\circ\Delta(\begin{bmatrix}
            \w{1}&\varepsilon&\varepsilon\\
            \varepsilon&\w{1}&\varepsilon\\
            \varepsilon&\varepsilon&\w{2}
        \end{bmatrix})\\
        &=\begin{bmatrix}
            \w{1}&\varepsilon&\varepsilon\\
            \varepsilon&\w{1}&\varepsilon\\
            \varepsilon&\varepsilon&\w{2}
        \end{bmatrix}\otimes\ec
        +\begin{bmatrix}
            \w{1}\end{bmatrix}\otimes\begin{bmatrix}
            \w{1}&\varepsilon\\
            \varepsilon&\w{2}
        \end{bmatrix}
        +\Psi(\begin{bmatrix}
            \w{1}&\varepsilon\\
            \varepsilon&\w{1}
        \end{bmatrix})\otimes\begin{bmatrix}
            \w{2}
        \end{bmatrix}
        +\ec\otimes\begin{bmatrix}
            \w{1}&\varepsilon&\varepsilon\\
            \varepsilon&\w{1}&\varepsilon\\
            \varepsilon&\varepsilon&\w{2}
        \end{bmatrix}
        \\
        &\not=\Delta(\begin{bmatrix}
            \w{1}&\varepsilon&\varepsilon\\
            \varepsilon&\w{1}&\varepsilon\\
            \varepsilon&\varepsilon&\w{2}
            \end{bmatrix})
            =\Delta\circ\Psi(\begin{bmatrix}
            \w{1}&\varepsilon&\varepsilon\\
            \varepsilon&\w{1}&\varepsilon\\
            \varepsilon&\varepsilon&\w{2}
        \end{bmatrix}), 
    \end{align*}
    since with \Cref{tab:gens_in_block_shuffles}, \begin{align*}\Psi(\begin{bmatrix}
            \w{1}&\varepsilon\\
            \varepsilon&\w{1}
        \end{bmatrix})&=\Psi(\frac12\begin{bmatrix}
           \w{1}
       \end{bmatrix}\blockShuffle\begin{bmatrix}
           \w{1}
       \end{bmatrix})\\&=\frac12\begin{bmatrix}
           \w{1}
       \end{bmatrix}\shuffle\begin{bmatrix}
           \w{1}
       \end{bmatrix}=\begin{bmatrix}
            \w{1}&\varepsilon\\
            \varepsilon&\w{1}
       \end{bmatrix}+\begin{bmatrix}
            \varepsilon&\w{1}\\
            \w{1}&\varepsilon
       \end{bmatrix}\not=\begin{bmatrix}
            \w{1}&\varepsilon\\
            \varepsilon&\w{1}
        \end{bmatrix}
        \end{align*}
        so $\Psi$ does not respect the coproduct $\Delta$. 
\end{example}

\section{Hoffman's isomorphism for two parameters}
\label{sec:hoffman}

In this section we introduce a novel two-parameter version of Hoffman's isomorphism  (\Cref{def:Phi}) together with its explicit inverse (\Cref{lem:Phi_inv}). 
With this we obtain the following main result.

\begin{theorem}\label{thm_iso_sh_qsh}
The Hopf algebras
$(H,\shuffle,\Delta)$ and $(H,\qShuffle,\Delta)$ are isomorphic. 
\end{theorem}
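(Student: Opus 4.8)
The plan is to construct an explicit algebra isomorphism $\Phi\colon (H,\qShuffle)\to(H,\shuffle)$ modeled on Hoffman's classical exponential, and then verify separately that it is a coalgebra morphism for the common coproduct $\Delta$. First I would define $\Phi$ multiplicatively on the common set of Lyndon-indexed generators: since by \Cref{thm:iso} both $(H,\qShuffle,+)$ and $(H,\shuffle,+)$ are free commutative $\groundRing$-algebras on ${(\mathcal{B}_w)}_{w\in\mathfrak{L}}$, it suffices to prescribe $\Phi$ on connected compositions and extend as an algebra map; the natural choice (the two-parameter Hoffman exponential) is a sum over set partitions of the "blocks" of a composition, fusing blocks via the monoid product $\star$ with the appropriate coefficients $\frac{1}{|\text{block}|!}$ (or, in the general Hoffman/quasi-shuffle formalism, coefficients coming from the exponential of the bracket). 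The explicit inverse $\Phi^{-1}$ (the "logarithm", \Cref{lem:Phi_inv}) is then given by the usual Möbius-type alternating sum over partitions, and one checks $\Phi\circ\Phi^{-1}=\Phi^{-1}\circ\Phi=\id$ by a standard partition-lattice computation.

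The core of the argument has three steps. Step one: show $\Phi$ is well-defined and bijective, using \Cref{thm:iso} plus the explicit formula for $\Phi^{-1}$; this is essentially combinatorial bookkeeping on set partitions. Step two: show $\Phi(\mathbf{a}\qShuffle\mathbf{b})=\Phi(\mathbf{a})\shuffle\Phi(\mathbf{b})$. Here I would follow Hoffman's original strategy adapted to two indices: it is enough to check multiplicativity on generators because both products are commutative and $\Phi$ is defined as an algebra map, but the subtlety is that $\Phi$ is \emph{not} simply the identity on generators, so one must verify the identity directly at the level of the defining sums. The cleanest route is to use \Cref{lem:block_shuffle_and_2param_shuffle,lem:block_shuffle_and_2param_qShuffle}: both $\qShuffle$ and $\shuffle$ agree with the block shuffle $\blockShuffle$ modulo lower-length terms, and $\Phi$ should be filtered (it preserves the length filtration, acting as identity on the top-length part). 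An induction on $\len$ then reduces the multiplicativity identity to an identity among the "fusion" corrections, which is exactly the combinatorial heart of Hoffman's theorem transported to the matrix setting. Step three: show $\Delta\circ\Phi=(\Phi\otimes\Phi)\circ\Delta$. Since $\Delta$ is the (diagonal) deconcatenation coproduct and the fusion operation producing $\Phi$ acts within connected blocks while $\Delta$ splits between blocks, the two operations essentially commute; I would verify this on the generators $\mathcal{B}_w$ and use that both sides are algebra maps $H\to H\otimes H$ with respect to $\qShuffle$ on the source (so it suffices to check on a generating set), invoking that $\Delta$ is a morphism for $\qShuffle$ and for $\shuffle$ by the Hopf algebra structure already recorded.

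I expect the main obstacle to be Step two: proving that the two-parameter Hoffman exponential intertwines $\qShuffle$ and $\shuffle$. In the one-parameter case this is Hoffman's theorem and its proof is already delicate; in two parameters the "fusion" of blocks interacts with both a row-partition and a column-partition of the matrix composition, and one must check that the coefficients produced by iterating the $\star$-products along rows \emph{and} columns match up on both sides of the quasi-shuffle/shuffle identity. The right framework is likely the $\mathbf{B}_\infty$-algebra / Hopf-algebraic one alluded to in the abstract: realize $\qShuffle$ as the deformation of $\shuffle$ by the "bracket" coming from the monoid product $\star$, so that $\Phi=\Exp$ of a suitable infinitesimal character and $\Phi^{-1}=\Log$; then multiplicativity is formal once one checks the bracket is a derivation-type operation compatible with $\Delta$. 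I would present the hands-on induction as the main proof and remark that it can be reinterpreted via $\mathbf{B}_\infty$-machinery, deferring the details of $\Phi$ and $\Phi^{-1}$ to \Cref{def:Phi} and \Cref{lem:Phi_inv} and the multiplicativity to the lemmas that follow them.
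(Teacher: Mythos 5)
Your overall architecture matches the paper's: an explicit two-parameter Hoffman exponential $\Phi$, shown to be multiplicative, explicitly invertible, and compatible with $\Delta$. But the way you set up the map, and your justifications for the two substantive steps, contain genuine gaps. You propose to \emph{define} $\Phi$ by prescribing it on connected compositions (or on the free generators) and extending as an algebra map. Connected compositions do not generate the algebra: the free generator $\mathcal{B}_w$ for the length-two Lyndon word $w$ built from $\begin{bmatrix}\w{1}\end{bmatrix}$ and $\begin{bmatrix}\w{2}\end{bmatrix}$ is $\begin{bmatrix}\w{1}&\varepsilon\\\varepsilon&\w{2}\end{bmatrix}$, which is not connected. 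And if you instead prescribe values on all of ${(\mathcal{B}_w)}_{w\in\mathfrak{L}}$ and extend multiplicatively, you get multiplicativity for free but lose all control over $\Delta$ --- that is precisely the construction of \Cref{cor:isos_lynon}, which the paper shows is \emph{not} a coalgebra morphism. The paper's $\Phi$ (\Cref{def:Phi}) is instead defined $\groundRing$-linearly on the monomial basis $\composition$, fusing adjacent rows and columns according to compositions $I,J$ with weight $1/(I!J!)$; multiplicativity is then a theorem to be proved, not a byproduct of the definition.

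Concerning those two theorems: for the coproduct, it is false that the fusion defining $\Phi$ ``acts within connected blocks'' --- $\Phi\bigl(\begin{bmatrix}\w{1}&\varepsilon\\\varepsilon&\w{2}\end{bmatrix}\bigr)$ contains the term $\frac14\begin{bmatrix}\w{1}\star\w{2}\end{bmatrix}$, which fuses across the block boundary. Compatibility with $\Delta$ holds only because the fused terms that still split under the reduced coproduct are exactly those coming from $\alpha$-decomposable pairs of fusion matrices, for which the factorials factor multiplicatively; this is the paper's splitting analysis (\Cref{lem:splittings}) and is where the real work lies, so checking on generators does not shortcut it. For multiplicativity, the induction on $\len$ via \Cref{lem:block_shuffle_and_2param_shuffle,lem:block_shuffle_and_2param_qShuffle} only controls leading terms, whereas $\Phi(\mathbf{a}\shuffle\mathbf{b})=\Phi(\mathbf{a})\qShuffle\Phi(\mathbf{b})$ is an exact identity among all lower-order terms. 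What is actually needed is the bijection-with-multiplicity between ``shuffle then fuse'' data $(\mathbf{U},\mathbf{P})$ and ``fuse separately then quasi-shuffle'' data $(\mathbf{A},\mathbf{U}_1,\mathbf{U}_2)$, with fibre count $\mathbf{U}!/(\mathbf{U}_1!\mathbf{U}_2!)$ converting the coefficient $1/\mathbf{U}!$ into $1/(\mathbf{U}_1!\mathbf{U}_2!)$, applied simultaneously to rows and columns (\Cref{lem:technical_hoffman1param}). You correctly identify this as the combinatorial heart but do not supply it, and the filtration argument you propose in its place would not produce it.
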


We prove this in this section. A sequence $I=(I_1,\dots,I_\ell)\in\N^\ell$ of positive integers is called a (classical, one-parameter) \DEF{composition} of $n\in\N_0$, if $I_1+\dots+I_\ell=n$. Conventionally the empty composition is counted as the sole composition of $0$. 
Denote the set of all compositions of $n$ by $\mathcal C(n)$.
\index[general]{C@$\mathcal{C}$}
\index[general]{len@$\len:\mathcal{C}(n)\rightarrow\N_0$}

For nonempty $\mathbf{a}\in\composition$, $I\in\mathcal{C}(\rows(\mathbf{a}))$ and $J\in\mathcal{C}(\cols(\mathbf{a}))$ let $\mathbf{a}_{I,J}$ denote the action of $I$ and $J$ on the rows and columns of $\mathbf{a}$, respectively.  
 This action is formally provided in \Cref{def:matrix_action}.
Let $\len(I)=\ell$ denote the \DEF{length} and $I!=I_1!\dots I_{\len(I)}!$ the \DEF{factorial}  of $I\in\N^\ell$.  

The following explicit mapping $\Phi$ is motivated by Hoffman's isomorphism \cite[Thm.~2.5]{H99}, and here generalized for two parameters.

\begin{definition}\label{def:Phi}
Let $\Phi$ be the $\groundRing$-linear mapping from  $(H,\shuffle,\Delta)$ to $(H,\qShuffle,\Delta)$
which satisfies $\Phi(\ec):=\ec$ and 
\[\Phi(\mathbf{a})
:=\sum_{\substack{I\in\mathcal{C}(\rows(\mathbf{a}))\\J\in\mathcal{C}(\cols(\mathbf{a}))}}\frac{1}{I!J!}\,\mathbf{a}_{I,J}\]
for every nonempty $\groundRing$-basis element $\mathbf{a}\in\composition$.
\end{definition}
\index[general]{Phi@$\Phi$}

\begin{example}
  \begin{align*}  
 \Phi(\begin{bmatrix}\w{1}\end{bmatrix})&=\begin{bmatrix}\w{1}\end{bmatrix},
 \quad\quad
\Phi(\begin{bmatrix}\w{1}\\\w{2}\end{bmatrix})=\begin{bmatrix}\w{1}\\\w{2}\end{bmatrix}+\frac12\begin{bmatrix}\w{1}\star\w{2}\end{bmatrix},
    \\
\Phi(\begin{bmatrix}\w{1}&\varepsilon\\\varepsilon&\w{2}\end{bmatrix})
&=\begin{bmatrix}\w{1}&\varepsilon\\\varepsilon&\w{2}\end{bmatrix}
+\frac12\begin{bmatrix}\w{1}\\\w{2}\end{bmatrix}
+\frac12\begin{bmatrix}\w{1}&\w{2}\end{bmatrix}
+\frac14\begin{bmatrix}\w{1}\star\w{2}\end{bmatrix},
   \\
\Phi(\begin{bmatrix}\w{1}&\varepsilon\\\varepsilon&\w{2}\\\varepsilon&\w{3}\end{bmatrix})
&=\begin{bmatrix}\w{1}&\varepsilon\\\varepsilon&\w{2}\\\varepsilon&\w{3}\end{bmatrix}
+\frac12\begin{bmatrix}\w{1}&\w{2}\\\varepsilon&\w{3}\end{bmatrix}
+\frac12\begin{bmatrix}\w{1}&\varepsilon\\\varepsilon&\w{2}\star\w{3}\end{bmatrix}
+\frac12\begin{bmatrix}\w{1}\\\w{2}\\\w{3}\end{bmatrix}\\
&\;\;\;\;\;\;+\frac14\begin{bmatrix}\w{1}\star\w{2}\\\w{3}\end{bmatrix}
+\frac14\begin{bmatrix}\w{1}\\\w{2}\star\w{3}\end{bmatrix}
+\frac1{12}\begin{bmatrix}\w{1}\star\w{2}\star\w{3}\end{bmatrix}
     \end{align*}
\end{example}

We first show that $\Phi$ is multiplicative. 

\begin{lemma}\label{lem:Phi_hom} 
For all  $\mathbf{a},\mathbf{b}\in\composition$, 
\[\Phi(\mathbf{a}\shuffle\mathbf{b})=\Phi(\mathbf{a})\qShuffle\Phi(\mathbf{b}).\]
\end{lemma}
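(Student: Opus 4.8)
The plan is to reduce the two-parameter statement to the classical one-parameter Hoffman result by exploiting the block-shuffle factorization. Concretely, I would first observe that both sides of the claimed identity are bilinear in $\mathbf{a}$ and $\mathbf{b}$, so it suffices to verify it on $\groundRing$-basis elements, and the empty-composition cases are immediate from $\Phi(\ec)=\ec$ and the fact that $\ec$ is the unit for both $\shuffle$ and $\qShuffle$. For nonempty $\mathbf{a},\mathbf{b}$, the natural move is to unfold $\Phi$ via the row/column-composition action: $\Phi(\mathbf{a})\qShuffle\Phi(\mathbf{b})$ becomes a double sum over compositions $I\in\mathcal C(\rows(\mathbf{a}))$, $I'\in\mathcal C(\rows(\mathbf{b}))$, $J\in\mathcal C(\cols(\mathbf{a}))$, $J'\in\mathcal C(\cols(\mathbf{b}))$ of $\tfrac{1}{I!I'!J!J'!}(\mathbf{a}_{I,J}\qShuffle\mathbf{b}_{I',J'})$, and similarly $\Phi(\mathbf{a}\shuffle\mathbf{b})$ expands through the shuffle-via-matrices description in \Cref{lem:shuffle_via_matrix_not}. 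The key structural fact I would want is that the quasi-shuffle $\mathbf{a}_{I,J}\qShuffle\mathbf{b}_{I',J'}$, when further acted on by fusing adjacent groups, is governed by exactly the same combinatorics of surjections $\qSh$ that appear in the definition of $\Phi$ on the shuffle side — so that the whole identity factors, row-combinatorics independently of column-combinatorics, into two copies of the classical one-dimensional Hoffman multiplicativity.

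The cleanest route is therefore a \emph{tensor/factorization argument}: the matrix encodings in \Cref{rem:SH_121} and the $\QSH$ description show that a two-parameter (quasi-)shuffle of $\diag(\mathbf{a},\mathbf{b})$ is obtained by choosing, independently, a row operation $\mathbf{P}$ and a column operation $\mathbf{Q}$ and forming $\mathbf{P}\,\diag(\mathbf{a},\mathbf{b})\,\mathbf{Q}^\top$; the coefficients $1/I!J!$ in $\Phi$ likewise factor as $(1/I!)(1/J!)$. Thus I would set up a bookkeeping lemma saying: for fixed "shapes", summing $\tfrac{1}{I!J!}\mathbf{a}_{I,J}$ against the quasi-shuffle surjection data is the same as summing against the shuffle permutation data after relabeling — i.e. the assignment $(\text{shuffle perm } p,\ \text{composition refining it})\leftrightarrow(\text{surjection } q)$ that underlies the classical proof of \cite[Thm.~2.5]{H99}, applied once to rows and once to columns. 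Because the row- and column-index sets of $\diag(\mathbf{a},\mathbf{b})$ are literally the disjoint unions $\rows(\mathbf{a})\sqcup\rows(\mathbf{b})$ and $\cols(\mathbf{a})\sqcup\cols(\mathbf{b})$, and the entry $\bigstar$-products in $\monoidComp_d$ only depend on which original cells get grouped together, this factorization is exact.

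I expect the main obstacle to be the careful matching of weights — verifying that the product of the $1/I!$-type coefficients coming from $\Phi(\mathbf{a})$, $\Phi(\mathbf{b})$, and the extra refinement introduced by forming the quasi-shuffle of the already-refined compositions equals, after resumming, the single $1/K!$ coefficient attached to each composition $K\in\mathcal C(\rows(\mathbf{a})+\rows(\mathbf{b}))$ on the left-hand side. This is precisely the classical Hoffman identity $\sum \tfrac{1}{I!\,I'!} = \tfrac{1}{K!}\cdot(\text{number of ways } K \text{ arises})$ reorganized, and it is where one either invokes \cite[Thm.~2.5]{H99} as a black box on each axis or reproves the binomial/multinomial bookkeeping. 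The remaining steps — checking that the row and column sums genuinely decouple (so the two one-dimensional identities can be multiplied), and that no cross-terms are lost because $\diag(\mathbf{a},\mathbf{b})$ has $\varepsilon$ off the two diagonal blocks — are routine once the encoding is set up, and I would relegate them to a short verification using \Cref{lem:shuffle_via_matrix_not} and the $\QSH$ matrix description of $\qShuffle$. A secondary, lower-risk obstacle is purely notational: making \Cref{def:matrix_action} interact cleanly with $\diag$, which I would handle by noting $(\diag(\mathbf{a},\mathbf{b}))_{K,L}$ restricts blockwise whenever $K,L$ respect the block boundary, and that the terms where they do not are exactly the genuinely "mixed" quasi-shuffle terms that must be present on both sides.
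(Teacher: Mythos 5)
Your plan is essentially the paper's own proof: the authors also rewrite $\Phi$ and the two products via one-hot matrix encodings ($\brew$, $\SH$, $\QSH$), apply the combinatorial argument from the proof of Hoffman's theorem independently on the row axis and the column axis (their \Cref{lem:technical_hoffman1param}, establishing the correspondence between pairs in $\brew\times\SH$ and triples in $\QSH\times\brew\times\brew$ with multiplicity $\mathbf{U}!/(\mathbf{U}_1!\mathbf{U}_2!)$), and let the coefficients factor axis-by-axis exactly as you describe. The weight-matching obstacle you single out is precisely the content of that lemma, which the paper reproves rather than citing as a black box, so your proposal is correct and follows the same route.
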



\subsection{Row and column action on matrix compositions}
In order to prove \Cref{lem:Phi_hom}, we rewrite 
\Cref{def:Phi} using matrix notation. For this, we introduce a one-hot matrix encoding of a (classical, one-parameter) composition. 
\begin{lemma}\label{lem:iso_brew_compositions}
     For all $v,\ell\in\N$, the set  \[\brew(v,\ell):=\left\{\begin{bmatrix}\e{\iota_1}&\dots&\e{\iota_\ell}\end{bmatrix}\in\N_0^{v\times \ell}\mid \iota_1\leq \dots\leq\iota_\ell\text{ and }\iota_j-\iota_{j-1}\leq 1\right\}\]
     is in one-to-one correspondence to the set $\left\{I\in\mathcal{C}(\ell)\mid \len (I)=v\right\}$  of compositions with  length $v$. 
\end{lemma}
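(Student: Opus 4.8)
The plan is to realise this bijection by factoring it through weakly increasing surjections. Throughout I read $\brew(v,\ell)$ as the set of one-hot matrices $[\e{\iota_1}\ \cdots\ \e{\iota_\ell}]\in\N_0^{v\times\ell}$ whose index sequence satisfies $\iota_1=1\le\iota_2\le\cdots\le\iota_\ell=v$ together with $\iota_j-\iota_{j-1}\le 1$ for all $j$; the boundary values are forced as soon as one requires (as is implicit in the matrix composition setting) that no row of the matrix vanish. The first observation is purely formal: a one-hot $v\times\ell$ matrix is the same datum as the sequence $(\iota_1,\dots,\iota_\ell)\in\{1,\dots,v\}^\ell$ recording, column by column, the row carrying the nonzero entry, and this identification is a bijection.

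Under this identification I claim the matrices in $\brew(v,\ell)$ correspond exactly to the weakly increasing surjections $\pi\colon\{1,\dots,\ell\}\twoheadrightarrow\{1,\dots,v\}$: a weakly increasing sequence running from value $1$ to value $v$ hits every intermediate value if and only if it never jumps by more than $1$, which is precisely the defining inequality of $\brew(v,\ell)$. Such surjections are in turn in bijection with $\{I\in\mathcal{C}(\ell):\len(I)=v\}$. In one direction, send $\pi$ to the tuple of fibre cardinalities $(|\pi^{-1}(1)|,\dots,|\pi^{-1}(v)|)$, which is a composition of $\ell$ since the fibres partition $\{1,\dots,\ell\}$, and has length $v$ since each fibre is nonempty by surjectivity. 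In the other, send a composition $I=(I_1,\dots,I_v)$ with partial sums $0=s_0<s_1<\cdots<s_v=\ell$ to the map $\pi_I(j):=\min\{i:s_i\ge j\}$, which is weakly increasing and constant equal to $i$ on the block $\{s_{i-1}+1,\dots,s_i\}$ of size $I_i\ge 1$, hence surjective. These two assignments are visibly mutually inverse, each reconstructing the block structure of the other.

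Composing the two bijections yields the claim: $I\in\mathcal{C}(\ell)$ with $v$ parts corresponds to the matrix whose $j$-th column is $\e{i}$ for the unique $i$ with $s_{i-1}<j\le s_i$ — equivalently, the one-hot matrix whose $i$-th row sum equals $I_i$ — and the inverse map reads off a composition as the vector of row sums of the matrix. The only step I expect to require genuine attention is the equivalence between the local unit-step condition on $(\iota_j)$ and the global condition that the index sequence be surjective onto $\{1,\dots,v\}$, i.e. that no interior (or trailing) zero row occur; granting that, the remainder is the familiar partial-sum encoding of one-parameter compositions and routine bookkeeping.
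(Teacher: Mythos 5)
Your proposal is correct and constructs exactly the same bijection as the paper, which simply writes down the block map $I\mapsto\begin{bmatrix}I_1.\e{1}&\dots&I_v.\e{v}\end{bmatrix}$ (first $I_1$ columns equal to $\e{1}$, next $I_2$ equal to $\e{2}$, and so on) and asserts bijectivity; your detour through weakly increasing surjections is just a more careful verification of that same map. Your observation that the literal definition of $\brew(v,\ell)$ omits the boundary conditions $\iota_1=1$, $\iota_\ell=v$ (equivalently, that no row vanishes — the right-invertibility the paper invokes later) is a fair and worthwhile point, and your reading of the intended definition matches the paper's examples.
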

\index[general]{fuse@$\brew$}

\begin{example}The one-hot encodings of all $\mathcal{C}(4)$ with length $2$ are given by
    \[\brew(2,4)=\left\{\begin{bmatrix}
                    1&1&1&0\\
                    0&0&0&1\end{bmatrix}, 
                    \begin{bmatrix}
                    1&1&0&0\\
                    0&0&1&1\end{bmatrix}, 
                    \begin{bmatrix}
                    1&0&0&0\\
                    0&1&1&1\end{bmatrix}\right\}.\]
\end{example}

\index[general]{aei@$a.\e{i}$ with $a,i\in\N$}
\begin{proof}[Proof of \Cref{lem:iso_brew_compositions}.]
    For any $a\in\N$  let 
    \begin{equation}\label{def:a_Dot_ej}
        a.\e{i}:=\begin{bmatrix}\e{i}&\dots&\e{i}\end{bmatrix}\in\N_0^{v\times a}.
    \end{equation}
    With this  we obtain a bijective mapping via  
    \[\begin{bmatrix}
I_1.\e{1}&\dots&I_v.\e{v}
    \end{bmatrix}\in\brew(v,\ell)\]
     for every $I\in\mathcal{C}(\ell)$ with $\len(I)=v$.  
\end{proof}

\begin{definition}\label{def:matrix_action}
  For all nonempty  $\mathbf{a}\in\composition$ and  \[(I,J)\in\mathcal{C}(\rows(\mathbf{a}))\times\mathcal{C}(\cols(\mathbf{a}))\] let \[(\mathbf{P},\mathbf{Q})\in\brew(\len( I),\rows(\mathbf{a}))\times\brew(\len( J),\cols(\mathbf{a}))\] denote the corresponding one-hot encodings due to \Cref{lem:iso_brew_compositions}.
   Then, we set  \[\mathbf{a}_{I,J}:=\mathbf{P}\mathbf{a}\mathbf{Q}\]
    using the matrix action of $\mathbf{P}$ and $\mathbf{Q}$ on the rows and columns of $\mathbf{a}$, respectively. 
    For $\mathbf{P}$ let 
 $\mathbf{P}!:=I!$ denote the \DEF{factorial} of its corresponding composition $I$.
\end{definition}
The one-hot encodings of compositions satisfy the following properties.

\begin{lemma}\label{lem:diag_of_brew_multiplic}
    For all $\ell=\ell_1+\ell_2$ and $v=v_1+v_2$ we have \[
    \left\{\diag(\mathbf{U},\mathbf{V})\in\brew(\ell,v)\;\begin{array}{|l}
     \size(\mathbf{U})=(\ell_1,v_1)\\\size(\mathbf{V})=(\ell_2,v_2)\end{array}\!\right\}=\diag(\brew(\ell_1,v_1),\brew(\ell_2,v_2)),\] 
    where the operation $\diag$ is applied entrywise. 
\end{lemma}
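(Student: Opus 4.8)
The plan is to unwind both sides through the bijection of \Cref{lem:iso_brew_compositions} and reduce the claimed set identity to the elementary fact that a weak-monotone one-hot matrix of the form $\begin{bmatrix}\e{\iota_1}&\cdots&\e{\iota_\ell}\end{bmatrix}$ with consecutive-difference condition is block-diagonal exactly when the index sequence $(\iota_1,\dots,\iota_\ell)$ ``jumps all the way'' at a single cut. Concretely, suppose $\mathbf{W}=\diag(\mathbf{U},\mathbf{V})\in\brew(\ell,v)$ with $\size(\mathbf{U})=(\ell_1,v_1)$ and $\size(\mathbf{V})=(\ell_2,v_2)$. Writing $\mathbf{W}=\begin{bmatrix}\e{\iota_1}&\cdots&\e{\iota_v}\end{bmatrix}$ in $\N_0^{\ell\times v}$ (note the roles of $\ell,v$ here: $\brew(\ell,v)$ lives in $\N_0^{\ell\times v}$), the block structure forces $\iota_1,\dots,\iota_{v_1}\le \ell_1$ and $\iota_{v_1+1},\dots,\iota_v\ge \ell_1+1$; combined with monotonicity $\iota_1\le\cdots\le\iota_v$ and $\iota_j-\iota_{j-1}\le 1$, this pins down $\iota_{v_1}=\ell_1$ and $\iota_{v_1+1}=\ell_1+1$. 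Hence the first $v_1$ columns, restricted to the first $\ell_1$ rows, form an element $\mathbf{U}$ whose index sequence is weakly increasing with unit steps starting from $\e{1}$-support and ending at $\e{\ell_1}$-support, i.e. $\mathbf{U}\in\brew(\ell_1,v_1)$; symmetrically $\mathbf{V}\in\brew(\ell_2,v_2)$ after re-indexing rows by subtracting $\ell_1$.

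For the reverse inclusion, given $\mathbf{U}\in\brew(\ell_1,v_1)$ and $\mathbf{V}\in\brew(\ell_2,v_2)$, I would check directly that $\mathbf{W}:=\diag(\mathbf{U},\mathbf{V})$ satisfies the two defining conditions of $\brew(\ell,v)$: each column of $\mathbf{W}$ is a standard basis vector (this is immediate from the block form), the index sequence is $(\iota^{\mathbf U}_1,\dots,\iota^{\mathbf U}_{v_1},\,\ell_1+\iota^{\mathbf V}_1,\dots,\ell_1+\iota^{\mathbf V}_{v_2})$, which is weakly increasing since $\iota^{\mathbf U}_{v_1}=\ell_1$ (as $\mathbf U$ hits all $\ell_1$ rows and is monotone with unit steps) and $\ell_1+\iota^{\mathbf V}_1\ge \ell_1+1>\ell_1$, and the unit-step condition holds across the junction because $\ell_1+\iota^{\mathbf V}_1-\iota^{\mathbf U}_{v_1}=\iota^{\mathbf V}_1=1$. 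So $\mathbf{W}\in\brew(\ell,v)$ with the prescribed block sizes.

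The cleanest way to phrase this, which I would probably adopt, is to translate through the composition side instead of manipulating one-hot matrices: under the bijection of \Cref{lem:iso_brew_compositions}, $\brew(\ell,v)$ corresponds to $\{I\in\mathcal C(v)\mid\len(I)=\ell\}$, and $\diag$ of one-hot encodings corresponds to concatenation of compositions $I=I'\cdot I''$. The identity then becomes the obvious statement that the compositions of $v=v_1+v_2$ of length $\ell=\ell_1+\ell_2$ that split as a concatenation of a composition of $v_1$ of length $\ell_1$ followed by a composition of $v_2$ of length $\ell_2$ are exactly all such concatenations — which is true essentially by definition of concatenation of compositions, since $\len$ and the summand-total are both additive under concatenation and concatenation is a bijection $\mathcal C(v_1)\times\mathcal C(v_2)\to\{I\in\mathcal C(v):I=I'I''\text{ with }|I'|=v_1\}$ restricting correctly on lengths.

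\textbf{Main obstacle.} There is no deep content here; the only thing to be careful about is bookkeeping of which index labels the rows and which the columns in $\brew(\cdot,\cdot)$, and the slightly asymmetric roles of the two arguments (the first is the length $=$ number of rows, the second is the integer being composed $=$ number of columns). The one genuinely non-formal point is verifying that the block-diagonal hypothesis \emph{forces} the junction indices $\iota_{v_1}=\ell_1$, $\iota_{v_1+1}=\ell_1+1$ — i.e. that $\mathbf U$ must use all $\ell_1$ of its rows and $\mathbf V$ all $\ell_2$ of its — which follows because an element of $\brew$ with a row of all zeros would violate either monotonicity (a skipped row) or surjectivity of the associated composition onto its length; I would state this as the lemma that every $\mathbf P\in\brew(a,b)$ has no zero row, and use it on both blocks. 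Everything else is a one-line check in either direction.
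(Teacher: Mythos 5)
Your argument is correct; the paper in fact states this lemma without proof, and your reasoning—reducing both inclusions to the observation that, under the bijection of \Cref{lem:iso_brew_compositions}, $\diag$ of one-hot encodings corresponds to concatenation of compositions (with the junction indices forced to be $\iota_{v_1}=\ell_1$, $\iota_{v_1+1}=\ell_1+1$)—is exactly the intended verification. Your explicit flagging of the implicit surjectivity convention (no zero rows in elements of $\brew$, which is what the paper's ``right invertible'' remark encodes) is the one point that genuinely needs saying, and you say it.
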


\begin{lemma}\label{lem:factorial_of_brew_multiplic}
    If $\mathbf{V}=\diag(\mathbf{V}_1,\mathbf{V}_2)\in\brew(\ell,v)$ with $\mathbf{V}_i\in\brew(\ell_i,v_i)$ for $i\in\{1,2\}$,  then $\mathbf{V}!=\mathbf{V}_1!\mathbf{V}_2!$. 
\end{lemma}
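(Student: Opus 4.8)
The plan is to trace the block decomposition $\mathbf{V}=\diag(\mathbf{V}_1,\mathbf{V}_2)$ through the explicit bijection of \Cref{lem:iso_brew_compositions} and to observe that, on the level of (one-parameter) compositions, $\diag$ corresponds to concatenation. The factorial statement then follows at once, because the factorial of a composition is the product of the factorials of its parts, hence multiplicative under concatenation.

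Concretely, recall from the proof of \Cref{lem:iso_brew_compositions} that the element of $\brew(\ell_i,v_i)$ attached to a composition $I^{(i)}\in\mathcal{C}(v_i)$ of length $\ell_i$ is
\[
  \mathbf{V}_i=\begin{bmatrix}I^{(i)}_1.\e{1}&\cdots&I^{(i)}_{\ell_i}.\e{\ell_i}\end{bmatrix},
\]
with $\e{k}\in\N_0^{\ell_i}$, and that by \Cref{def:matrix_action} one has $\mathbf{V}_i!=I^{(i)}!$. Let $I$ be the concatenation of $I^{(1)}$ and $I^{(2)}$, a composition of $v=v_1+v_2$ of length $\ell=\ell_1+\ell_2$, and let $\mathbf{W}\in\brew(\ell,v)$ be its one-hot encoding. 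I would then verify $\mathbf{W}=\diag(\mathbf{V}_1,\mathbf{V}_2)=\mathbf{V}$ by a column-by-column comparison: for $k\leq\ell_1$ one has $I_k=I^{(1)}_k$ and the block $I_k.\e{k}$ of $\mathbf{W}$ uses $\e{k}\in\N_0^{\ell}$ supported on the first $\ell_1$ rows, reproducing the first block of $\diag(\mathbf{V}_1,\mathbf{V}_2)$ padded by zeros below; for $k>\ell_1$ one has $I_k=I^{(2)}_{k-\ell_1}$ and $\e{k}\in\N_0^{\ell}$ is supported on the last $\ell_2$ rows, reproducing the second block. (Equivalently, $\diag(\mathbf{V}_1,\mathbf{V}_2)\in\brew(\ell,v)$ by the inclusion in \Cref{lem:diag_of_brew_multiplic}, and its unique preimage under the bijection of \Cref{lem:iso_brew_compositions} must be the concatenation of $I^{(1)}$ and $I^{(2)}$.)

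Finally, since $I!=I_1!\cdots I_\ell!$ and $I$ is obtained by concatenating $I^{(1)}$ and $I^{(2)}$, we get $I!=I^{(1)}!\,I^{(2)}!$, hence $\mathbf{V}!=\mathbf{W}!=I!=I^{(1)}!\,I^{(2)}!=\mathbf{V}_1!\,\mathbf{V}_2!$. The only point needing care is keeping the row/column conventions of $\brew(\cdot,\cdot)$ straight; there is no genuine obstacle, as the lemma is essentially the observation that the encoding of \Cref{lem:iso_brew_compositions} turns $\diag$ into concatenation of compositions and that the factorial is multiplicative with respect to the latter.
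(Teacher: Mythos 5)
Your proof is correct. The paper states this lemma without proof, treating it as an evident property of the one-hot encoding, and your argument is precisely the intended justification: under the bijection of \Cref{lem:iso_brew_compositions} the block-diagonal form $\diag(\mathbf{V}_1,\mathbf{V}_2)$ corresponds to concatenating the compositions $I^{(1)}$ and $I^{(2)}$, and the factorial $I!=I_1!\cdots I_{\len(I)}!$ is multiplicative under concatenation. Your column-by-column verification (and the alternative via the uniqueness of the preimage, using \Cref{lem:diag_of_brew_multiplic}) is exactly the right level of detail.
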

Furthermore,  all matrices in $\brew(v,\ell)$ are right invertible, and \Cref{def:Phi} translates to 
\begin{equation}\Phi(\mathbf{a})
=\sum_{u,v\in\N}\,\sum_{\substack{\mathbf{U}\in \brew(u,\rows(\mathbf{a}))\\\mathbf{V}\in \brew(v,\cols(\mathbf{a}))}}\frac{1}{\mathbf{U}!\mathbf{V}!}\,\mathbf{U}\mathbf{a}\mathbf{V}^\top.\end{equation}

\subsection{Multiplication}
From the proof of \cite[Thm.~2.5]{H99} we recall the following combinatorial (one-parameter) argument in the language of our one-hot matrix encodings. 
We apply this argument on both axes simultaneously, and thus show that $\Phi$ is multiplicative (\Cref{lem:Phi_hom}).
\begin{lemma}\label{lem:technical_hoffman1param}
Let $a,b\in\N$ be fixed. 
\begin{enumerate}
    \item\label{lem:technical_hoffman1param1}
    For all $u_1,u_2,j\in\N$, the mapping 
    \begin{align*}M_{u_1,u_2,j}:\QSH(u_1,u_2;j)\times\brew(u_1,a)\times\brew(u_2,b)&\rightarrow\N_0^{j\times (a+b)}\\
    (\mathbf{A},\mathbf{U}_1,\mathbf{U}_2)&\mapsto \mathbf{A}\diag(\mathbf{U}_1,\mathbf{U}_2)
    \end{align*}
    is injective. 
    \item\label{lem:technical_hoffman1param2}
    For all $j\in\N$ and  $\mathbf{W}\in\Ima(N_j)$ with 
    \begin{align*}N_{j}:\brew(j,a+b)\times\SH(a,b)&\rightarrow\N_0^{j\times (a+b)}\\
    (\mathbf{U},\mathbf{P})&\mapsto \mathbf{U}\mathbf{P}
    \end{align*}
    there is a unique $\mathbf{U}$ such that $N_j(\mathbf{U},\mathbf{P})=\mathbf{W}$. 
    \item\label{lem:technical_hoffman1param3} For all $j\in\N$, 
    \[\Ima(N_j)=\biguplus_{u_1,u_2}\Ima(M_{u_1,u_2,j}).\]
    \item\label{lem:technical_hoffman1param4}
    Each non-empty preimage $N_j^{-1}(\mathbf{UP})$
     has the cardinality 
    \[\# N_j^{-1}(\mathbf{UP})=\frac{\mathbf{U}!}{\mathbf{U}_1!\mathbf{U}_2!}\]
    where $\mathbf{UP}=\mathbf{A}\diag(\mathbf{U}_1,\mathbf{U}_2)$ due to part \ref{lem:technical_hoffman1param3}.

    \end{enumerate}
\end{lemma}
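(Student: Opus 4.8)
The plan is to translate everything into a single combinatorial picture: a matrix in $\N_0^{j\times(a+b)}$ with exactly one nonzero entry (a $1$) per column is the same datum as a map $f\colon\{1,\dots,a+b\}\to\{1,\dots,j\}$ ($f(k)$ being the row of the $1$ in column $k$), and left-multiplication by another $0/1$ matrix of this shape corresponds to composing such maps. Under this dictionary a matrix in $\QSH(u_1,u_2;j)$ is a pair of strictly increasing sequences $\iota_1<\dots<\iota_{u_1}$ and $\kappa_1<\dots<\kappa_{u_2}$ in $\{1,\dots,j\}$ with $\{\iota_\bullet\}\cup\{\kappa_\bullet\}=\{1,\dots,j\}$; a matrix in $\brew(v,\ell)$ is a weakly increasing unit-step surjection $\{1,\dots,\ell\}\to\{1,\dots,v\}$, equivalently the composition of $\ell$ whose parts are the fibre sizes; and a matrix in $\SH(a,b)$ is a permutation of $\{1,\dots,a+b\}$ that is increasing on $\{1,\dots,a\}$ and on $\{a+1,\dots,a+b\}$. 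The small block-diagonal bookkeeping ($\diag(\mathbf{U}_1,\mathbf{U}_2)$ acting separately on columns $1,\dots,a$ and $a+1,\dots,a+b$) I will carry along silently.

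I would settle parts~\ref{lem:technical_hoffman1param1} and \ref{lem:technical_hoffman1param2} first. For $M_{u_1,u_2,j}$: the first $a$ columns of $\mathbf{A}\diag(\mathbf{U}_1,\mathbf{U}_2)$ are obtained by substituting the (weakly increasing, unit-step, surjective) index sequence of $\mathbf{U}_1$ into the strictly increasing sequence $\iota_\bullet$ of $\mathbf{A}$, hence form a weakly increasing sequence whose set of distinct values is exactly $\{\iota_1,\dots,\iota_{u_1}\}$ and whose multiplicity profile is exactly the composition encoded by $\mathbf{U}_1$; so the left $a$ columns of $\mathbf{W}$ already determine $u_1$, the first $u_1$ columns of $\mathbf{A}$, and $\mathbf{U}_1$, and symmetrically the right $b$ columns determine $u_2$, the last $u_2$ columns of $\mathbf{A}$, and $\mathbf{U}_2$; concatenating recovers $\mathbf{A}$. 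This is the injectivity. For $N_j$: if $\mathbf{U}\mathbf{P}=\mathbf{W}$ with $\mathbf{P}$ a permutation, then $\mathbf{W}$ and $\mathbf{U}$ have the same multiset of columns; since a $\brew$-matrix is the unique matrix with a prescribed column-multiset whose columns are sorted ($e_1$'s, then $e_2$'s, and so on), $\mathbf{U}$ is forced by $\mathbf{W}$, which gives part~\ref{lem:technical_hoffman1param2} (the first component of any preimage of $\mathbf{W}$ is determined by $\mathbf{W}$).

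Next I would prove part~\ref{lem:technical_hoffman1param3} by showing that both $\Ima(N_j)$ and $\bigcup_{u_1,u_2}\Ima(M_{u_1,u_2,j})$ coincide with the set $\mathcal{W}_j$ of matrices in $\N_0^{j\times(a+b)}$ having one $1$ per column, weakly increasing row-indices along columns $1,\dots,a$, weakly increasing row-indices along columns $a+1,\dots,a+b$, and surjective onto the $j$ rows; the inclusions into $\mathcal{W}_j$ are immediate from the descriptions above. Conversely, given $\mathbf{W}\in\mathcal{W}_j$, let $I'_v$ (resp.\ $J'_v$) be the number of columns equal to $e_v$ among the first $a$ (resp.\ last $b$) columns of $\mathbf{W}$ and $K_v=I'_v+J'_v$; then every $K_v>0$ by surjectivity, so $\mathbf{U}\in\brew(j,a+b)$ is the matrix of the composition $K=(K_1,\dots,K_j)$, its sets $B_v$ of columns equal to $e_v$ are consecutive intervals ordered by $v$, and splitting each $B_v$ into its first $I'_v$ and last $J'_v$ positions produces a shuffle permutation $\mathbf{P}\in\SH(a,b)$ with $\mathbf{U}\mathbf{P}=\mathbf{W}$, so $\mathbf{W}\in\Ima(N_j)$; likewise the increasing lists of distinct values and their multiplicities in the two blocks of $\mathbf{W}$ yield $\mathbf{U}_1\in\brew(u_1,a)$, $\mathbf{U}_2\in\brew(u_2,b)$ and $\mathbf{A}\in\QSH(u_1,u_2;j)$ with $\mathbf{A}\diag(\mathbf{U}_1,\mathbf{U}_2)=\mathbf{W}$, so $\mathbf{W}\in\Ima(M_{u_1,u_2,j})$. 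Finally the pair $(u_1,u_2)$ is intrinsic to $\mathbf{W}$ — the numbers of distinct row-indices in its two blocks — which is exactly what makes the union disjoint, i.e.\ earns the ``$\uplus$''.

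For part~\ref{lem:technical_hoffman1param4}, fix $\mathbf{W}=\mathbf{U}\mathbf{P}$ together with its $M$-representation $\mathbf{A}\diag(\mathbf{U}_1,\mathbf{U}_2)$ from part~\ref{lem:technical_hoffman1param3}; by part~\ref{lem:technical_hoffman1param2} the first component of every element of $N_j^{-1}(\mathbf{W})$ is this same $\mathbf{U}$, so it remains to count the permutations $\mathbf{P}'\in\SH(a,b)$ with $\mathbf{U}\mathbf{P}'=\mathbf{W}$. Column $k$ of such a $\mathbf{P}'$ must point into the interval $B_v$ where $v$ is the row of the $1$ in column $k$ of $\mathbf{W}$; moreover $\mathbf{P}'$ must be a bijection increasing on its first $a$ and on its last $b$ columns; since the $B_v$ are intervals ordered by $v$ and the two blocks of $\mathbf{W}$ are already weakly increasing, the only freedom is to choose, inside each $B_v$, which $I'_v$ of its $K_v=I'_v+J'_v$ positions serve the $I'_v$ left columns of value $v$ (the two increasing orders are then forced and global monotonicity is automatic). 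This gives $\prod_v\binom{K_v}{I'_v}=\frac{\prod_vK_v!}{\prod_vI'_v!\,\prod_vJ'_v!}=\frac{K!}{I!\,J!}=\frac{\mathbf{U}!}{\mathbf{U}_1!\,\mathbf{U}_2!}$, where $I,J$ are the compositions of $\mathbf{U}_1,\mathbf{U}_2$ and the trivial factors $0!$ absorb the rows not met by the respective block. I expect the only genuinely delicate point to be the reconstruction steps in part~\ref{lem:technical_hoffman1param3}: verifying that the matrices produced really lie in $\QSH$ and $\SH$ (the strict-monotonicity constraints), and, in part~\ref{lem:technical_hoffman1param4}, that the block-local choices always glue into a legitimate shuffle permutation; the remainder is direct bookkeeping inherited from the one-parameter argument of \cite{H99}.
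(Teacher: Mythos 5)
Your proposal is correct and follows essentially the same route as the paper's proof: both work with the one-hot column encodings, recover $(\mathbf{A},\mathbf{U}_1,\mathbf{U}_2)$ from the two column blocks of $\mathbf{W}$, determine $\mathbf{U}$ as the sorted column multiset, and count the admissible shuffles block by block to get the multinomial $\mathbf{U}!/(\mathbf{U}_1!\mathbf{U}_2!)$. Your only organizational difference is to name the common image $\mathcal{W}_j$ explicitly and prove both inclusions into it, which is a slightly cleaner packaging of the same argument.
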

Despite this being well-known, we provide a proof for completeness.
\begin{proof}[Proof of \Cref{lem:technical_hoffman1param}.]
    The elements in $\Ima(M_{u_1,u_2,j})$ are given by actions on the columns in $\mathbf{A}$, that is 
    \begin{align}
    &\mathbf{A}\diag(\mathbf{U}_1,\mathbf{U}_2)\nonumber\\
    &=\begin{bmatrix}\e{\iota_1}&\dots&\e{\iota_{u_1}}&\e{\kappa_1}&\dots&\e{\kappa_{u_2}}\end{bmatrix}
    \begin{bmatrix}I_1.e_1&\dots&I_{u_1}.\e{u_1}&0&\dots&0\\
    0&\dots&0&J_1.e_1&\dots&J_{u_2}.\e{u_2}\end{bmatrix}\nonumber\\
    &=\begin{bmatrix}I_1.\e{\iota_1}&\dots&I_{u_1}.\e{\iota_{u_1}}&J_1.\e{\kappa_1}&\dots&J_{u_2}.\e{\kappa_{u_2}}\end{bmatrix}\label{eq:AdiagU1U2}
    \end{align}
    where $t.\e{i}$ is according to \cref{def:a_Dot_ej} for $t\in\N$.
    Whenever  
    \begin{align*}&\begin{bmatrix}I_1.\e{\iota_1}&\dots&I_{u_1}.\e{\iota_{u_1}}&J_1.\e{\kappa_1}&\dots&J_{u_2}.\e{\kappa_{u_2}}\end{bmatrix}\\
    &=
    \begin{bmatrix}I'_1.\e{\iota'_1}&\dots&I'_{u_1}.\e{\iota'_{u_1}}&J'_1.\e{\kappa'_1}&\dots&J'_{u_2}.\e{\kappa'_{u_2}}\end{bmatrix}
    \end{align*}
    then $I=I'$, $J=J'$, $\iota=\iota'$ and $\kappa=\kappa'$, i.e., part \ref{lem:technical_hoffman1param1}.
    
For part \ref{lem:technical_hoffman1param2}.~let $\mathbf{UP}=\mathbf{U}'\mathbf{P}'$ with $\mathbf{U},\mathbf{U}'\in\brew(j,a+b)$ and $\mathbf{P},\mathbf{P}'\in\SH(a,b)$. Then  $\mathbf{U}=\mathbf{U}'\mathbf{P}'\mathbf{P}^{-1}$, and therefore $\mathbf{U}=\mathbf{U}'$ due to $\mathbf{P}'\mathbf{P}^{-1}\in\Sigma_{a+b}$. 

For part \ref{lem:technical_hoffman1param3}.~the elements $\mathbf{U}\mathbf{P}\in\Ima(N_j)$
    are given by $\mathbf{U}$, acting on the rows in
    \[
    \mathbf{P}=\begin{bmatrix}
      \e{\sigma_1}&\dots&\e{\sigma_{a+b}}
  \end{bmatrix}
  =\begin{bmatrix}
      \e{\mu_1}&\dots&\e{\mu_{a}}&\e{\nu_1}&\dots&\e{\nu_{b}}
  \end{bmatrix}\]
 with permutation $\sigma\in\Sigma_{a+b}$ and two strictly increasing and disjoint $\mu$ and $\nu$. 
  This determines unique $K\in\mathcal{C}(j,a+b)$  and $m,n\in\N_0^j$ such that
  \begin{align}\label{eq:UP_proof_coices}
  \mathbf{U}\mathbf{P}&=
  \begin{bmatrix}
      K_1.\e{1}&\dots&K_j.\e{j}
  \end{bmatrix}
  \begin{bmatrix}
      \e{\sigma_1}&\dots&\e{\sigma_{a+b}}
  \end{bmatrix}\\
  &=\begin{bmatrix}m_1.\e{1}&\dots&m_{j}.\e{j}&n_1.\e{1}&\dots&n_{j}.\e{j}\nonumber\end{bmatrix}
\end{align}
   with $0.e_i$ encoding an empty block whenever $m_i$ or $n_i$ is zero for $1\leq i\leq j$. 
   We obtain \cref{eq:AdiagU1U2} by viewing $m$ and $n$ as elements $I\in\mathcal{C}(u_1,a)$ and $J\in\mathcal{C}(u_2,b)$, i.e., by omitting all the $j-u_1$ zero entries in $m$, and the $j-u_2$ zero entries in $n$. This implies 
   \begin{equation}\label{eq:proof_NM_same_Ima}
     \Ima(N_j)\subseteq\bigcup_{u_1,u_2}\Ima(M_{u_1,u_2,j}).  
   \end{equation}
   In \cref{eq:AdiagU1U2} we can count the number of non-zero rows $u_1$ and $u_2$ in the first $a$ and remaining $b$ columns, which implies that the union in \cref{eq:proof_NM_same_Ima} is disjoint. In part \ref{lem:technical_hoffman1param2}.~we showed already that the only freedom for   \cref{eq:UP_proof_coices}  comes from different choices of $\mathbf{P}$. 
   In particular there are exactly
   \[\frac{K_1!\dots K_j!}{m_1!\dots m_j!n_1!\dots n_j!}=\frac{K_1!\dots K_j!}{I_1!\dots I_{\len(I)}!J_1!\dots J_{\len(J)}!}=\frac{\mathbf{U}!}{\mathbf{U}_1!\mathbf{U}_2!}\]
   choices of $\mathbf{P}'\in\SH(a,b)$ such that $\mathbf{UP}=\mathbf{U}\mathbf{P}'$ is satisfied. 
   It remains to show equality in \cref{eq:proof_NM_same_Ima}.
   For this let 
   \[\mathbf{W}\in\biguplus_{u_1,u_2}\Ima(M_{u_1,u_2,j}),\]
   so there are unique $u_1$ and $u_2$ such that $\mathbf{W}\in\Ima(M_{u_1,u_2,j})$. With part \ref{lem:technical_hoffman1param1}.~there is a unique \[(\mathbf{A},\mathbf{U}_1,\mathbf{U}_2)\in\QSH(u_1,u_2;j)\times\brew(u_1,a)\times\brew(u_2,b)\]
   such that $\mathbf{W}=\mathbf{A}\diag(\mathbf{U}_1,\mathbf{U}_2)$. 
   Let $\iota,\kappa,I$ and $J$ as in \cref{eq:AdiagU1U2}. 
   Since $\mathbf{A}$ is right invertible there exist disjoint $\tilde\iota_1<\dots<\tilde\iota_{a}$ and $\tilde\kappa_1<\dots<\tilde\kappa_{b}$ such that 
   $\{\tilde\iota_1,\dots,\tilde\iota_{a}\}=\{\iota_1,\dots,\iota_{u_1}\}$, 
   $\{\tilde\kappa_1,\dots,\tilde\kappa_b\}=\{\kappa_1,\dots,\kappa_{u_1}\}$
   and 
   $$\{\tilde\iota_1,\dots,\tilde\iota_{a}\}\cup\{\tilde\kappa_1,\dots,\tilde\kappa_b\}
   =\{\iota_1,\dots,\iota_{u_1}\}\cup\{\kappa_1,\dots,\kappa_{u_2}\}
   =\{1,\dots,a+b\}.$$
   This defines 
   \[
    \mathbf{P}:=
  \begin{bmatrix} \e{\tilde\iota_1}&\dots&\e{\tilde\iota_{a}}&\e{\tilde\kappa_1}&\dots&\e{\tilde\kappa_{b}}
  \end{bmatrix}\in\SH(a,b).\]
  Choose now $\mathbf{U}\in\brew(j,a+b)$ such that $N_j(\mathbf{U},\mathbf{P})=\mathbf{U}\mathbf{P}=\mathbf{W}$. 
\end{proof}

With this we verify that $\Phi$ is multiplicative.

\begin{proof}[Proof of \Cref{lem:Phi_hom}]
For all nonempty matrix compositions $\mathbf{a}$ and $\mathbf{b}$, 
\begin{align*}
\Phi(\mathbf{a}\shuffle\mathbf{b})
&=\sum_{\substack{j\\k}}\sum_{\substack{\mathbf{U}\in \brew(j,\rows(\diag(\mathbf{a},\mathbf{b})))\\\mathbf{V}\in \brew(k,\cols(\diag(\mathbf{a},\mathbf{b})))\\\mathbf{P}\in \SH(\rows(\mathbf{a}),\rows(\mathbf{b}))\\\mathbf{Q}\in\SH(\cols(\mathbf{a}),\cols(\mathbf{b}))}}\frac{1}{\mathbf{U}!\mathbf{V}!}\,\mathbf{U}\mathbf{P}\diag(\mathbf{a},\mathbf{b})\mathbf{Q}^\top\mathbf{V}^\top\\
&\overset{\ref{lem:technical_hoffman1param}.\ref{lem:technical_hoffman1param4}}=\,\sum_{\substack{j\\k}}
\sum_{\substack{\mathbf{UP}\in\Ima(N_j)\\
\mathbf{VQ}\in\Ima(N_k)}}
\frac{\#N_j^{-1}(\mathbf{UP})\#N_k^{-1}(\mathbf{VQ})}{\mathbf{U}!\mathbf{V}!}\mathbf{U}\mathbf{P}\diag(\mathbf{a},\mathbf{b})\mathbf{Q}^\top\mathbf{V}^\top
\\
&\overset{(\ref{eq:summands_Phi})}=\sum_{\substack{j\\k\\u_1\\u_2\\v_1\\v_2}}
\sum_{\substack{\mathbf{U}_1\in \brew(u_1,\rows(\mathbf{a}
))\\
\mathbf{U}_2\brew(u_2,\rows(\mathbf{b}))\\
\mathbf{V}_1\in \brew(v_1,\cols(\mathbf{a}))\\
\mathbf{V}_2\in \brew(v_2,\cols(\mathbf{b}))\\
\mathbf{A}\in \QSH(u_1,u_2;j)\\
\mathbf{B}\in\QSH(v_1,v_2;k)}}\frac{1}{\mathbf{U}_1!\mathbf{U}_2!\mathbf{V}_1!\mathbf{V}_2!}\mathbf{A}\diag(\mathbf{U}_1\mathbf{a}\mathbf{V}_1^\top,\mathbf{U}_2\mathbf{b}\mathbf{V}_2^\top)\mathbf{B}^\top\\
&=\Phi(\mathbf{a})\qShuffle\Phi(\mathbf{b}),
\end{align*} 
with summands 
\begin{align}
    \mathbf{U}\mathbf{P}\diag(\mathbf{a},\mathbf{b})\mathbf{Q}^\top\mathbf{V}^\top\nonumber
    &=M(\mathbf{A},\mathbf{U}_1,\mathbf{U}_2)\diag(\mathbf{a},\mathbf{b})M(\mathbf{B},\mathbf{V}_1,\mathbf{V}_2)^\top\nonumber\\&=
    \mathbf{A}\diag(\mathbf{U}_1,\mathbf{U}_2)\diag(\mathbf{a},\mathbf{b}){\diag(\mathbf{V}_1,\mathbf{V}_2)}^\top\mathbf{B}^\top\nonumber\\
    &=\mathbf{A}\diag(\mathbf{U}_1\mathbf{a}\mathbf{V}_1^\top,\mathbf{U}_2\mathbf{b}\mathbf{V}_2^\top)\mathbf{B}^\top\label{eq:summands_Phi},
    \end{align}
with \Cref{lem:technical_hoffman1param}.\ref{lem:technical_hoffman1param3}, and due to elementary block arithmetic. 
\end{proof}

\subsection{Invertibility}
We provide the explicit inverse of our two-parameter Hoffman's isomorphism. 
\begin{lemma}\label{lem:Phi_inv} 
  The homomorphism $\Phi$ is invertible, where 
    \[\Phi^{-1}(\mathbf{a}):=\sum_{\substack{I\in\mathcal{C}(\rows(\mathbf{a}))\\J\in\mathcal{C}(\cols(\mathbf{a}))}}\frac{(-1)^{\rows(\mathbf{a})-\len(I)+\cols(\mathbf{a})-\len(J)}}{I_1\dots I_{\len(I)}J_1\dots J_{\len(J)} }\,\mathbf{a}_{I,J}\]
    for every nonempty $\mathbf{a}\in\composition$. 
\end{lemma}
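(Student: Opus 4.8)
The plan is to verify directly that the $\groundRing$-linear map $\Phi^{-1}$ given by the displayed formula (with $\Phi^{-1}(\ec):=\ec$) is a two-sided inverse of $\Phi$. Since $\composition$ is a $\groundRing$-basis of $H$ and $\Phi(\ec)=\Phi^{-1}(\ec)=\ec$, it is enough to establish $\Phi^{-1}(\Phi(\mathbf a))=\mathbf a$ and $\Phi(\Phi^{-1}(\mathbf a))=\mathbf a$ for a fixed nonempty $\mathbf a\in\composition$, say with $p:=\rows(\mathbf a)$ and $q:=\cols(\mathbf a)$. The key structural ingredient is a \emph{composition law} for the action of \Cref{def:matrix_action}: for $I\in\mathcal C(p)$ and $K\in\mathcal C(\len(I))$ write $I\bullet K\in\mathcal C(p)$ for the coarsening of $I$ obtained by summing its parts in consecutive blocks of sizes $K_1,\dots,K_{\len(K)}$; in terms of the one-hot encodings of \Cref{lem:iso_brew_compositions} this is plain matrix multiplication, so that $(\mathbf a_{I,J})_{K,L}=\mathbf a_{\,I\bullet K,\,J\bullet L}$ (with the corresponding product rule for the factorials). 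Substituting \Cref{def:Phi} into the formula for $\Phi^{-1}$ and collecting, for fixed $M\in\mathcal C(p)$ and $N\in\mathcal C(q)$, the coefficient of $\mathbf a_{M,N}$ in $\Phi^{-1}(\Phi(\mathbf a))$, the row and column contributions decouple completely, and each is a sum over pairs $(I,K)$ with $I\bullet K=M$ --- equivalently, over refinements $I$ of $M$, with $K$ uniquely determined by $I$ and $M$.

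Factoring over the blocks of $M=(m_1,\dots,m_r)$, a refinement $I$ of $M$ is a concatenation $I=I^{(1)}\cdots I^{(r)}$ with $I^{(j)}\in\mathcal C(m_j)$ and recording composition $K=(\len(I^{(1)}),\dots,\len(I^{(r)}))$, so the row contribution to the coefficient of $\mathbf a_{M,N}$ in $\Phi^{-1}(\Phi(\mathbf a))$ equals $\prod_{j=1}^{r}\bigl(\sum_{I'\in\mathcal C(m_j)}\frac{(-1)^{\len(I')-1}}{\len(I')\,I'!}\bigr)$, and similarly on columns. The whole statement thus reduces to the classical one-parameter identity
\[\sum_{I\in\mathcal C(m)}\frac{(-1)^{\len(I)-1}}{\len(I)\,I!}=\delta_{m,1},\qquad m\in\N,\]
which is the combinatorial heart of \cite[Thm.~2.5]{H99}. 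I would include its short proof: with $E(x):=e^{x}-1=\sum_{m\ge1}x^{m}/m!$ one has $E(x)^{k}=\sum_{(I_1,\dots,I_k)\in\N^{k}}x^{I_1+\dots+I_k}/(I_1!\cdots I_k!)$, hence $\sum_{k\ge1}\frac{(-1)^{k-1}}{k}E(x)^{k}=\log(1+E(x))=\log e^{x}=x$, and comparing coefficients of $x^{m}$ gives the identity. Consequently the coefficient of $\mathbf a_{M,N}$ in $\Phi^{-1}(\Phi(\mathbf a))$ is $1$ if $M$ and $N$ are the all-ones compositions and $0$ otherwise, i.e. $\Phi^{-1}(\Phi(\mathbf a))=\mathbf a_{(1,\dots,1),(1,\dots,1)}=\mathbf a$.

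The reverse composition $\Phi(\Phi^{-1}(\mathbf a))=\mathbf a$ follows by the identical argument, now requiring the dual identity $\sum_{I\in\mathcal C(m)}\frac{(-1)^{m-\len(I)}}{\len(I)!\,I_1\cdots I_{\len(I)}}=\delta_{m,1}$, which drops out of $\exp(\log(1+x))=1+x$ in the same manner; alternatively, one may note that $\Phi$ maps $\mathbf a$ to $\mathbf a$ plus a $\groundRing$-combination of compositions with strictly smaller $\rows+\cols$ and that $H$ is graded with finite-dimensional components by the product of all matrix entries in $\mathfrak{M}_d$, so $\Phi$ is unitriangular on each component and a one-sided inverse is automatically two-sided. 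The only genuine input is the one-parameter identity above; the remaining work --- setting up the composition law from the one-hot encoding and carrying out the block factorization with the correct signs and factorials --- is routine bookkeeping, and that bookkeeping is the main (modest) obstacle to writing the proof out in full.
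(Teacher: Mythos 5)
Your proof is correct and follows essentially the same route as the paper: the paper packages the computation as the general composition law $\evaluate_{f,p}\circ\evaluate_{g,q}=\evaluate_{f\circ g,p\circ q}$ for the evaluation maps (\Cref{lem_HI17_thm31_2param}, proved via the same coarsening identity $(\mathbf{a}_{I,A})_{J,B}=\mathbf{a}_{J\circ I,B\circ A}$ and row/column decoupling you use) and then obtains \Cref{lem:Phi_inv} by taking $f=g=e^t-1$ with compositional inverse $\log(1+t)$, which is exactly your $\log(1+E(x))=x$ coefficient identity specialized directly. The only cosmetic difference is that you verify the two specific compositions by hand rather than stating the general lemma first.
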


\begin{example}
    \begin{align*}
        \Phi^{-1}(\begin{bmatrix}\w{1}\end{bmatrix})&=\begin{bmatrix}\w{1}\end{bmatrix},\quad\quad
\Phi^{-1}(\begin{bmatrix}\w{1}\\\w{2}\end{bmatrix})=\begin{bmatrix}\w{1}\\\w{2}\end{bmatrix}-\frac12\begin{bmatrix}\w{1}\star\w{2}\end{bmatrix},
    \\
\Phi^{-1}(\begin{bmatrix}\w{1}&\varepsilon\\\varepsilon&\w{2}\end{bmatrix})
&=\begin{bmatrix}\w{1}&\varepsilon\\\varepsilon&\w{2}\end{bmatrix}
-\frac12\begin{bmatrix}\w{1}\\\w{2}\end{bmatrix}
-\frac12\begin{bmatrix}\w{1}&\w{2}\end{bmatrix}
+\frac14\begin{bmatrix}\w{1}\star\w{2}\end{bmatrix}
\end{align*}
    \end{example}

In order to prove this, we introduce further notation. 
For two fixed formal power series in one variable $t$, and without constant term,  $$f=\sum_{i\in\N}f_it^i\,,\;g=\sum_{i\in\N}g_it^i\in t\cdot\groundRing\llbracket t\rrbracket,$$ 
we define a $\groundRing$-linear \DEF{evaluation} 
$\evaluate_{f,g}\in\End(H)$ via 
\[\evaluate_{f,g}(\mathbf{a}):=\sum_{\substack{I\in\mathcal{C}(\rows(\mathbf{a}))\\J\in\mathcal{C}(\cols(\mathbf{a}))}}f_{I_1}\dots f_{I_{\len(I)}}g_{J_1}\dots g_{J_{\len( J)}}\mathbf{a}_{I,J}\]
for every $\groundRing$-basis element $\mathbf{a}\in\composition$. 
For every $i\in\N_0$ let $[t^i]f:=f_i$ denote the coefficient of $f$ at monomial $t^i$. 
We introduce the two-parameter generalization of \cite[Thm.~3.1]{Hoffman_2017}.\index[general]{eval@$\evaluate$}

\begin{lemma}\label{lem_HI17_thm31_2param} We have 
    \[\evaluate_{f,p}\circ\evaluate_{g,q}=\evaluate_{f\circ g,p\circ q}\]
    for every $f,g,p,q\in t\cdot\groundRing\llbracket t\rrbracket$.
\end{lemma}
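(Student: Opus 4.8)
The statement $\evaluate_{f,p}\circ\evaluate_{g,q}=\evaluate_{f\circ g,\,p\circ q}$ is a two-parameter version of the one-parameter identity, and my plan is to reduce it to two independent applications of the one-parameter fact on the row-axis and on the column-axis, using that the row action and column action on a matrix composition commute. First I would make explicit how composing two evaluations acts on a basis element $\mathbf{a}\in\composition$: expanding $\evaluate_{f,p}\bigl(\evaluate_{g,q}(\mathbf{a})\bigr)$, one obtains a double sum over pairs of compositions $(K,L)$ of $(\rows(\mathbf{a}),\cols(\mathbf{a}))$ — coming from the inner application of $\evaluate_{g,q}$ — and then over pairs of compositions $(I,J)$ of $(\len(K),\len(L))$ — coming from the outer application of $\evaluate_{f,p}$. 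The coefficient is the product $\bigl(\prod_r g_{K_r}\bigr)\bigl(\prod_s q_{L_s}\bigr)\cdot\bigl(\prod_r f_{I_r}\bigr)\bigl(\prod_s p_{J_s}\bigr)$ and the composition that appears is $(\mathbf{a}_{K,L})_{I,J}$.

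\textbf{Key step: regrouping by coarsening.} The heart of the argument is that $(\mathbf{a}_{K,L})_{I,J}=\mathbf{a}_{I\circ K,\,J\circ L}$, where $I\circ K$ denotes the coarsening of $K$ obtained by merging consecutive blocks according to $I$ (this follows from \Cref{def:matrix_action}: a $\brew$-matrix composed with a $\brew$-matrix is again a $\brew$-matrix encoding the coarsened composition). Then I would collect, for each fixed pair $(M,N)$ of compositions of $(\rows(\mathbf{a}),\cols(\mathbf{a}))$, all the terms with $I\circ K=M$ and $J\circ L=N$. Because the row and column data factor completely — $K$ ranges over refinements of $M$ together with the corresponding $I$, independently of the column side — the coefficient of $\mathbf{a}_{M,N}$ factors as a product of a row-contribution and a column-contribution, and each of these is exactly the one-parameter sum computed in the proof of \cite[Thm.~3.1]{Hoffman_2017}. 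That one-parameter computation shows $\sum_{K\text{ refining }M}\prod_r g_{K_r}\prod\text{(f on the merge pattern)} = \prod_{\text{blocks }M_b}(f\circ g)_{M_b}$, i.e. the row-contribution is $\prod_b (f\circ g)_{M_b}$ and similarly the column-contribution is $\prod_b (p\circ q)_{N_b}$. Summing $\mathbf{a}_{M,N}$ with this coefficient over all $(M,N)$ is by definition $\evaluate_{f\circ g,\,p\circ q}(\mathbf{a})$.

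\textbf{Main obstacle.} The routine-but-delicate part is the bookkeeping of the refinement/coarsening correspondence: I must be careful that a composition $K$ of $\rows(\mathbf{a})$ together with a composition $I$ of $\len(K)$ is the same datum as a composition $M$ of $\rows(\mathbf{a})$ together with an ordered set partition of its blocks into $\len(I)=\len(M)$ groups — and that under this identification $f_{I_1}\cdots f_{I_{\len I}}g_{K_1}\cdots g_{K_{\len K}}$ reassembles correctly. This is precisely the content of the one-parameter theorem, so I would either cite it directly on each axis after isolating the row- and column-sums, or, if a self-contained argument is preferred, reprove it by recognizing the inner sum over a fixed block $M_b=(m)$ as $\sum_{\ell\geq 1}\sum_{\substack{k_1+\dots+k_\ell=m}} f_\ell\,g_{k_1}\cdots g_{k_\ell} = [t^m](f\circ g)$, which is exactly the coefficient extraction definition of composition of power series. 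The two-parameter generalization adds nothing conceptually beyond observing that the row-action matrices $\mathbf{P}$ and column-action matrices $\mathbf{Q}$ act on disjoint tensor factors, so no cross terms arise and the two axes can be treated wholly separately.
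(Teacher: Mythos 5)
Your proposal is correct and follows essentially the same route as the paper: both reduce the two-parameter identity to the one-parameter refinement/coarsening identity of \cite[Thm.~3.1]{Hoffman_2017}, applied independently on the row and column axes, using that iterated composition actions compose as $(\mathbf{a}_{K,L})_{I,J}=\mathbf{a}_{I\circ K,\,J\circ L}$. The only difference is cosmetic: you expand the left-hand side and coarsen, whereas the paper expands $\evaluate_{f\circ g,\,p\circ q}(\mathbf{a})$ and refines via Hoffman--Ihara's eq.~(11) before regrouping.
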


We recall the one-parameter argument from \cite[eq.~(9)]{Hoffman_2017},
\begin{equation}\label{eq:inv_hoff_1}
    [t^k](f\circ g)=\sum_{j=1}^k[t^j]f[t^k]g^j
\end{equation}
for every $f,g\in t\cdot\groundRing\llbracket t\rrbracket$ and $k\in\N$.
Let $\mathbf{a}\in\composition$ be fixed.
For $I\in\mathcal{C}(\rows(\mathbf{a}))$ with $\len(I)=x$  and $J\in\mathcal{C}(x)$ let 
\[J\circ I:=(I_1+\dots +I_{J_1},I_{J_1+1}+\dots+I_{J_1+J_2},\dots,I_{J_1+\dots+J_{\ell-1}+1}+I_x)\in\mathcal{C}(\rows(\mathbf{a}))\]
denote the concatenation of $I$ and $J$. Note that $\len(J\circ I)=\len(J)$. Recall further from \cite[eq.~(11)]{Hoffman_2017} that for all $K\in\mathcal{C}(\rows(\mathbf{a}))$, 
\begin{align}\label{eq:Hoffman17_eq11}
    [t^{K_1}]&(f\circ g)\dots [t^{K_{\len(K)}}](f\circ g)\nonumber\\
    &=\sum_{x=1}^{\rows(\mathbf{a})}\sum_{\substack{J\in\mathcal{C}(x)\\\len(J)=\len( K)}}\sum_{\substack{I\in\mathcal{C}(\rows(\mathbf{a}))\\J\circ I=K}}[t^{J_1}]f\dots[t^{J_{\len( K)}}]f[t^{I_1}]g\dots[t^{I_{x}}]g.
\end{align}
 
\begin{proof}[Proof of \Cref{lem_HI17_thm31_2param}.]
We use the recalled one-parameter argument from above on rows and columns, i.e. 
\begin{align*}
    &\evaluate_{f\circ g,p\circ q}(\mathbf{a})\\
    &=\sum_{\substack{K\in\mathcal{C}(\rows(\mathbf{a}))\\L\in\mathcal{C}(\cols(\mathbf{a}))}}[t^{K_1}](f\circ g)\dots [t^{K_{\lvert K\rvert}}](f\circ g)[t^{L_1}](p\circ q)\dots t^{L_{\lvert L\rvert)}}(p\circ q)\mathbf{a}_{K,L}\\
    &\overset{(\ref{eq:Hoffman17_eq11})}=
    \sum_{\substack{K\in\mathcal{C}(\rows(\mathbf{a}))\\L\in\mathcal{C}(\cols(\mathbf{a}))\\1\leq x\leq\rows(\mathbf{a})\\
    1\leq y\leq \cols(\mathbf{a})}}
    \sum_{\substack{J\in\mathcal{C}(x)\\B\in\mathcal{C}(y)\\\len(J)=\len(K)\\\len(B)=\len(L)}}
    \sum_{\substack{I\in\mathcal{C}(\rows(\mathbf{a}))\\A\in\mathcal{C}(\cols(\mathbf{a}))\\\text{with}\\J\circ I=K\\
    B\circ A=L}}
    \left(\prod_{a}[t^{I_a}]g\right)
    \left(\prod_{b}[t^{A_b}]q\right)
    \left(\prod_{s}[t^{J_s}]f\right)
    \left(\prod_{r}[t^{B_r}]p\right)\,\mathbf{a}_{K,L}
    \\
    &=\sum_{\substack{I\in\mathcal{C}(\rows(\mathbf{a}))\\A\in\mathcal{C}(\cols(\mathbf{a}))}}\left(\prod_{a}[t^{I_a}]g\right)
    \left(\prod_{b}[t^{A_b}]q\right)\left(
    \sum_{\substack{1\leq x\leq \rows(\mathbf{a})\\
    1\leq y\leq \cols(\mathbf{a})}}
    \sum_{\substack{J\in\mathcal{C}(x)\\B\in\mathcal{C}(y)\\\text{with}\\\len(I)=x\\
    \len(A)=y}}\left(\prod_{s}[t^{J_s}]f\right)
    \left(\prod_{r}[t^{B_r}]p\right){\left(\mathbf{a}_{I,A}\right)}_{J,B}\right)
    \\
    &=
    \sum_{\substack{I\in\mathcal{C}(\rows(\mathbf{a}))\\A\in\mathcal{C}(\cols(\mathbf{a}))}}[t^{I_1}]g\dots[t^{I_{\len( I)}}]g
    [t^{A_1}]q\dots[t^{A_{\len(A)}}]q\,\evaluate_{f,p}
    \left(\mathbf{a}_{I,A}\right)\\
    &=\evaluate_{f,p}\circ\evaluate_{g,q}(\mathbf{a})
\end{align*}
for every $\groundRing$-basis element $\mathbf{a}\in\composition$. 
\end{proof}

\begin{proof}[Proof of \Cref{lem:Phi_inv}]
    This follows by choosing $f(t):=g(t) := e^t-1$ in the situation of \Cref{lem_HI17_thm31_2param}.
\end{proof}
\subsection{Comultiplication}

\begin{lemma}\label{lem:Phi_hom_coprod} 
The homomorphism $\Phi$ respects deconcatenation, i.e., 
\[(\Phi\otimes\Phi)\circ\Delta=\Delta\circ\Phi.\]
\end{lemma}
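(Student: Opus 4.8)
The plan is to reduce the two-parameter statement to the one-parameter situation that Hoffman already handled, by exploiting the product formula for $\Delta$ (diagonal deconcatenation) and the explicit expansion of $\Phi$ in terms of the row/column actions $\mathbf{a}_{I,J}$. Concretely, I would start from $\Phi(\mathbf{a}) = \sum_{I\in\mathcal C(\rows(\mathbf{a})),\,J\in\mathcal C(\cols(\mathbf{a}))} \frac{1}{I!J!}\mathbf{a}_{I,J}$ and apply $\Delta$ to each $\mathbf{a}_{I,J}$. Since $\Delta$ is dual to diagonal concatenation, for a composition $\mathbf{a} = \diag(\mathbf{u}_1,\dots,\mathbf{u}_\ell)$ the coproduct is $\Delta(\mathbf{a}) = \sum_{0\le r\le \ell} \diag(\mathbf{u}_1,\dots,\mathbf{u}_r)\otimes\diag(\mathbf{u}_{r+1},\dots,\mathbf{u}_\ell)$; the key observation is that the row/column action $\mathbf{a}_{I,J}$ interacts with this splitting in a controlled way, because a one-parameter composition $I\in\mathcal C(\rows(\mathbf{a}))$ that straddles the cut at row-height induced by $r$ must split as $I = I'\cdot I''$ unless one of its parts bridges the two blocks — and a bridging part would fuse rows from $\diag(\mathbf{u}_1,\dots,\mathbf{u}_r)$ with rows from the complementary block, which cannot happen for a block-diagonal matrix without creating $\varepsilon$-rows or destroying the deconcatenation structure. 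So the sum over $I$ refines into a sum over pairs $(I',I'')$ with $I'\in\mathcal C(\text{rows of left block})$, $I''\in\mathcal C(\text{rows of right block})$, and $I! = I'!\,I''!$; the same for $J$.

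The main step is therefore to prove the compatibility identity $\Delta(\mathbf{a}_{I,J}) = \sum \mathbf{b}_{I',J'}\otimes \mathbf{c}_{I'',J''}$ where the sum is over all ways of writing $\mathbf{a} = \diag(\mathbf{b},\mathbf{c})$ along the diagonal and correspondingly splitting $I = I'I''$, $J = J'J''$ (including the degenerate splits where one of $\mathbf{b},\mathbf{c}$ is $\ec$). This is essentially a combinatorial lemma about the one-hot encodings $\brew(\len I,\rows\mathbf{a})$: a $\brew$-matrix is block-diagonal exactly when it respects the partition, which is precisely \Cref{lem:diag_of_brew_multiplic}, and its factorial multiplies accordingly by \Cref{lem:factorial_of_brew_multiplic}. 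Granting this identity, I would compute
\[
\Delta\circ\Phi(\mathbf{a}) = \sum_{\mathbf{a}=\diag(\mathbf{b},\mathbf{c})}\;\sum_{\substack{I'\in\mathcal C(\rows\mathbf{b}),\,I''\in\mathcal C(\rows\mathbf{c})\\ J'\in\mathcal C(\cols\mathbf{b}),\,J''\in\mathcal C(\cols\mathbf{c})}} \frac{1}{I'!\,I''!\,J'!\,J''!}\, \mathbf{b}_{I',J'}\otimes\mathbf{c}_{I'',J''} = (\Phi\otimes\Phi)\circ\Delta(\mathbf{a}),
\]
where the last equality is just recognizing the inner double sum as $\Phi(\mathbf{b})\otimes\Phi(\mathbf{c})$ and the outer sum as $\Delta(\mathbf{a})$. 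The boundary cases $\mathbf{b}=\ec$ or $\mathbf{c}=\ec$ and the empty composition $\mathbf{a}=\ec$ are handled by the convention $\Phi(\ec)=\ec$ and need to be checked separately but are immediate.

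The hard part will be the combinatorial compatibility lemma — specifically, being careful that the row/column actions $\mathbf{a}_{I,J}$ do not introduce spurious connectivity or break the identification of $H$ with $\groundRing\langle\compositionConnected\rangle$ underlying the choice of $\Delta$ (as flagged in \Cref{rem:identificationdiagAndStringCon}). One must verify that applying a fusing action $I$ to $\mathbf{a}=\diag(\mathbf{b},\mathbf{c})$ commutes with decomposition precisely because the action matrix restricted to the sub-blocks is again a valid $\brew$-matrix on each block, and no part of $I$ can span the two diagonal blocks simultaneously without fusing rows that belong to genuinely different connected components — which would contradict $I$ being supported on a single diagonal block structure. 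Once this is set up correctly via \Cref{lem:diag_of_brew_multiplic,lem:factorial_of_brew_multiplic}, the rest is bookkeeping. Alternatively — and this may be cleaner to write — one can avoid the explicit lemma by noting that $\Phi = \evaluate_{e^t-1,e^t-1}$ and that each $\evaluate_{f,g}$ is automatically a coalgebra morphism for $\Delta$ whenever $f,g$ have no constant term, since $\evaluate_{f,g}$ acts ``locally'' on diagonal blocks; I would state and prove this coalgebra-morphism property for general $\evaluate_{f,g}$ as the single clean lemma, then specialize.
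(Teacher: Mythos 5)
Your proposal is correct and follows essentially the same route as the paper: the ``combinatorial compatibility lemma'' you identify as the hard part is exactly the paper's \Cref{lem:splittings} (nontrivial decompositions of $\mathbf{U}\mathbf{a}\mathbf{V}^\top$ biject with the cut positions $\alpha$ at which $(\mathbf{U},\mathbf{V})$ is $\alpha$-decomposable), and the paper likewise combines it with \Cref{lem:diag_of_brew_multiplic,lem:factorial_of_brew_multiplic} to reorganize the sum into $(\Phi\otimes\Phi)\circ\widetilde\Delta$. The remaining bookkeeping and the treatment of the trivial terms via $\Phi(\ec)=\ec$ match the paper's computation.
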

In order to prove \Cref{lem:Phi_hom_coprod} we introduce splittings similar to \cite[Sec.~5.3]{DS23}.
\begin{definition}
For fixed $\mathbf{a}=\diag(\mathbf{u}_1,\dots,\mathbf{u}_{\len(\mathbf{a})})$ and $0<\alpha<\len(\mathbf{a})$, we say that  
\[(\mathbf{U},\mathbf{V})\in\brew(u,\rows(\mathbf{a}))\times\brew(v,\cols(\mathbf{a}))\] is \DEF{$\alpha$-decomposable}, if 
\begin{equation}\label{eq:def_alpha_dec}(\mathbf{U},\mathbf{V})
=\left(\diag(
    \mathbf{U}_{1},\mathbf{U}_{2}),
\diag(
    \mathbf{V}_{1},\mathbf{V}_{2})\right)\end{equation}
with $u=u_1+u_2$ and $v=v_1+v_2$ such that 
\[(\mathbf{U}_{1},\mathbf{U}_{2})
\in
\brew\left(u_1,\sum_{1\leq i\leq\alpha}\rows(\mathbf{u}_i)\right)
\times
\brew\left(u_2,\sum_{\alpha+1\leq i\leq \len(\mathbf{a})}\rows(\mathbf{u}_i)\right)\]
and 
\[(\mathbf{V}_{1},\mathbf{V}_{2})
\in
\brew\left(v_1,\sum_{1\leq i\leq\alpha}\cols(\mathbf{u}_i)\right)
\times
\brew\left(v_2,\sum_{\alpha+1\leq i\leq \len(\mathbf{a})}\cols(\mathbf{u}_i)\right).\]
\end{definition}
\begin{lemma}
    If $(\mathbf{U},\mathbf{V})$ is $\alpha$-decomposable, then its decomposition from \cref{eq:def_alpha_dec} is unique. 
\end{lemma}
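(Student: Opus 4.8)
The plan is to show that the two pieces of data in the decomposition that a priori look free—the inner block sizes $u_1,u_2,v_1,v_2$ and the blocks $\mathbf{U}_i,\mathbf{V}_i$ themselves—are in fact completely pinned down by $(\mathbf{U},\mathbf{V})$ together with $\mathbf{a}$ and $\alpha$. First I would record that the column dimensions occurring in the definition, namely
\[m_1:=\sum_{1\leq i\leq\alpha}\rows(\mathbf{u}_i),\qquad m_2:=\sum_{\alpha+1\leq i\leq\len(\mathbf{a})}\rows(\mathbf{u}_i),\]
and the analogous $n_1,n_2$ built from $\cols(\mathbf{u}_i)$, depend only on $\mathbf{a}$ and $\alpha$, not on the chosen decomposition. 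Hence in any witnessing pair \eqref{eq:def_alpha_dec} the block $\mathbf{U}_1$ must occupy exactly the first $m_1$ columns of $\mathbf{U}$ and $\mathbf{U}_2$ the last $m_2$ columns; likewise $\mathbf{V}_1,\mathbf{V}_2$ split the columns of $\mathbf{V}$ at position $n_1$. There is no freedom in where these vertical cuts are made.

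With the column split fixed, I would then exploit that $\mathbf{U}=\diag(\mathbf{U}_1,\mathbf{U}_2)$ is block diagonal: every row of $\mathbf{U}$ has all of its nonzero entries either confined to the first $m_1$ columns—in which case it is a row of $\mathbf{U}_1$—or confined to the last $m_2$ columns, in which case it is a row of $\mathbf{U}_2$. By \Cref{lem:iso_brew_compositions} the matrices in $\brew$ are one-hot encodings of compositions whose parts are all $\geq 1$, so none of them has a zero row; consequently each of the $u$ rows of $\mathbf{U}$ falls into exactly one of these two classes, and this classification is an intrinsic property of $\mathbf{U}$ once the column split has been fixed. Therefore $u_1$ (the number of rows of the first class) and $u_2=u-u_1$ are determined, and $\mathbf{U}_1$ and $\mathbf{U}_2$ are then forced to be the corresponding row-and-column submatrices of $\mathbf{U}$. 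So $(\mathbf{U}_1,\mathbf{U}_2)$ is unique, and the identical argument applied to $\mathbf{V}$ (with $n_1,n_2$ in place of $m_1,m_2$) yields uniqueness of $(\mathbf{V}_1,\mathbf{V}_2)$; together these give uniqueness of the whole decomposition.

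The argument is essentially bookkeeping, so I do not expect a genuine obstacle; the only point requiring a little care is the appeal to the one-hot structure (absence of zero rows), which is what makes the row classification both well defined and exhaustive. An equivalent, slightly more formal route would invoke \Cref{lem:diag_of_brew_multiplic}: it already identifies block-diagonal $\brew$-matrices of prescribed shape with pairs of $\brew$-matrices, so that only the shapes $(u_1,m_1),(u_2,m_2)$ (and $(v_1,n_1),(v_2,n_2)$) remain to be determined, which is exactly what the row-counting above accomplishes.
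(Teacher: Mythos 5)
Your proof is correct; the paper states this lemma without proof, and your argument supplies exactly the bookkeeping one would expect. The two essential observations are both present and correctly justified: the column split of $\mathbf{U}$ (resp.\ $\mathbf{V}$) is fixed by $\mathbf{a}$ and $\alpha$ alone, and the absence of zero rows in $\brew$-matrices (equivalently, their right invertibility, which the paper asserts after \Cref{lem:iso_brew_compositions}) forces the row split $u_1$ and hence the blocks themselves.
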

Similar to \cite[Lem.~5.21]{DS23} we describe the block structure of matrix compositions after the action due to \Cref{def:matrix_action}. 
\begin{lemma}\label{lem:splittings}
   For fixed  $\mathbf{U}$,$\mathbf{V}$ and $\mathbf{a}$, the set 
   \[\{(\mathbf{x},\mathbf{y})\in\composition^2\mid\mathbf{x}\not=\ec\not=\mathbf{y}\land\diag(\mathbf{x},\mathbf{y})=\mathbf{U}\mathbf{a}\mathbf{V}^\top\}\]
   is in one-to-one correspondence to all \DEF{splittings}, 
   \[\mathfrak{S}_{\mathbf{U},\mathbf{V}}^{\mathbf{a}}:=\{1\leq\alpha<\len(\mathbf{a})\mid\;(\mathbf{U},\mathbf{V})\text{ is }\alpha\text{-decomposable}\}.\]
\end{lemma}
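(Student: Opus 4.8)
The plan is to establish the claimed bijection by unwinding both sides of the correspondence and matching the block structure of $\mathbf{U}\mathbf{a}\mathbf{V}^\top$ against the diagonal-block structure of $(\mathbf{U},\mathbf{V})$. First I would fix $\mathbf{a}=\diag(\mathbf{u}_1,\dots,\mathbf{u}_{\len(\mathbf{a})})$, $\mathbf{U}\in\brew(u,\rows(\mathbf{a}))$, $\mathbf{V}\in\brew(v,\cols(\mathbf{a}))$ and recall from \Cref{lem:iso_brew_compositions} that $\mathbf{U}$ corresponds to a one-parameter composition $I\in\mathcal{C}(\rows(\mathbf{a}))$ of length $u$, and similarly $\mathbf{V}\leftrightarrow J\in\mathcal{C}(\cols(\mathbf{a}))$. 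The key observation is that $\mathbf{U}$ only merges \emph{consecutive} rows (an entry $\iota_j$ with $\iota_j=\iota_{j-1}$), and likewise $\mathbf{V}$ only merges consecutive columns; hence the product $\mathbf{U}\mathbf{a}\mathbf{V}^\top=\mathbf{a}_{I,J}$ can have a nontrivial diagonal decomposition $\diag(\mathbf{x},\mathbf{y})$ only if the split happens at a row/column boundary that is not ``straddled'' by any merged block of $\mathbf{U}$ or $\mathbf{V}$, and that boundary must moreover fall between two of the original connected components $\mathbf{u}_i$ (since $\mathbf{a}=\diag(\mathbf{u}_1,\dots)$ already has its block structure fixed at those positions, and row/column merges can only coarsen, never refine, the block decomposition).

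The forward map I would define as follows: given a splitting $\alpha\in\mathfrak{S}_{\mathbf{U},\mathbf{V}}^{\mathbf{a}}$, use the (unique, by the preceding lemma) $\alpha$-decomposition $(\mathbf{U},\mathbf{V})=(\diag(\mathbf{U}_1,\mathbf{U}_2),\diag(\mathbf{V}_1,\mathbf{V}_2))$ and set $\mathbf{x}:=\mathbf{U}_1(\diag(\mathbf{u}_1,\dots,\mathbf{u}_\alpha))\mathbf{V}_1^\top$ and $\mathbf{y}:=\mathbf{U}_2(\diag(\mathbf{u}_{\alpha+1},\dots,\mathbf{u}_{\len(\mathbf{a})}))\mathbf{V}_2^\top$; elementary block arithmetic (exactly as used in the proof of \Cref{lem:Phi_hom}, via $\mathbf{A}\diag(\mathbf{U}_1,\mathbf{U}_2)\diag(\mathbf{a},\mathbf{b})\diag(\mathbf{V}_1,\mathbf{V}_2)^\top=\mathbf{A}\diag(\mathbf{U}_1\mathbf{a}\mathbf{V}_1^\top,\mathbf{U}_2\mathbf{b}\mathbf{V}_2^\top)\mathbf{B}^\top$) shows $\diag(\mathbf{x},\mathbf{y})=\mathbf{U}\mathbf{a}\mathbf{V}^\top$, and since $\mathbf{U}_1,\mathbf{V}_1$ act on nonempty row/column ranges (as $0<\alpha<\len(\mathbf{a})$ and each $\mathbf{u}_i$ is a genuine connected composition with no $\varepsilon$-rows or $\varepsilon$-columns), both $\mathbf{x}$ and $\mathbf{y}$ are nonempty. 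For the reverse map: given $(\mathbf{x},\mathbf{y})$ with $\diag(\mathbf{x},\mathbf{y})=\mathbf{U}\mathbf{a}\mathbf{V}^\top$, read off the row index $r$ and column index $c$ at which the diagonal split of the product occurs; pull these back through the one-hot matrices $\mathbf{U},\mathbf{V}$ (i.e.\ locate the preimage row/column indices) to obtain a boundary in $\{1,\dots,\rows(\mathbf{a})\}\times\{1,\dots,\cols(\mathbf{a})\}$, then check this boundary lies strictly between $\mathbf{u}_\alpha$ and $\mathbf{u}_{\alpha+1}$ for a unique $\alpha$ with $0<\alpha<\len(\mathbf{a})$; finally verify that this $\alpha$ is a splitting, i.e.\ that $\mathbf{U}$ and $\mathbf{V}$ restrict to block-diagonal form at that boundary — which holds precisely because a merge of $\mathbf{U}$ crossing the boundary would force $\mathbf{U}\mathbf{a}\mathbf{V}^\top$ to have a nonzero entry linking the $\mathbf{x}$-block to the $\mathbf{y}$-block, contradicting the diagonal form. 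One then checks the two maps are mutually inverse, which is essentially bookkeeping once the index translation is set up.

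The main obstacle, I expect, is the careful verification that a diagonal split of $\mathbf{U}\mathbf{a}\mathbf{V}^\top$ \emph{forces} $\mathbf{U}$ and $\mathbf{V}$ to be $\alpha$-decomposable at the corresponding original boundary — in other words, that no ``accidental'' cancellation can make the product block-diagonal without the $\brew$-matrices themselves being block-diagonal there. This is where I would need to use that the entries of $\mathbf{a}$ live in the free commutative monoid $\mathfrak{M}_d$ with \emph{no} $\varepsilon$-rows or $\varepsilon$-columns in any connected component $\mathbf{u}_i$: if $\mathbf{U}$ merged row $r$ of some $\mathbf{u}_i$ with row $r+1$ of $\mathbf{u}_{i+1}$ across the putative boundary, the resulting merged row of the product would contain the (nonempty) star-product of a nonzero entry from $\mathbf{u}_i$ with an entry from $\mathbf{u}_{i+1}$, placing nontrivial content on both sides of the column split $c$ — impossible for a genuinely diagonal $\diag(\mathbf{x},\mathbf{y})$. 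This argument runs in parallel on the row and column axes, mirroring the structure of \cite[Lem.~5.21]{DS23}, and once it is in place the bijection and its inverse fall out by the index-translation bookkeeping sketched above.
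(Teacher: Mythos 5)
Your proposal is correct and follows essentially the same route as the paper: block arithmetic on the $\alpha$-decomposition for the forward direction, and for the converse the observation that, because each connected component $\mathbf{u}_i$ has no $\varepsilon$-rows or $\varepsilon$-columns and $\brew$-matrices only merge consecutive indices, a diagonal decomposition of $\mathbf{U}\mathbf{a}\mathbf{V}^\top$ forces the cut to sit at a component boundary with $\mathbf{U}$ and $\mathbf{V}$ block-diagonal there. The paper phrases this converse via a $\bigstar$-decomposition of the product over the components $\mathbf{u}_s$ rather than row-by-row, but the underlying argument is the same.
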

\index[general]{S@$\mathfrak{S}_{\mathbf{U},\mathbf{V}}^{\mathbf{a}}$}
\begin{proof}
If $\alpha\in\mathfrak{S}_{\mathbf{U},\mathbf{V}}^{\mathbf{a}}$ then  $(\mathbf{U},\mathbf{V})$ is $\alpha$-decomposable, and therefore,
\begin{align}\label{eq:UaV_alpha_decomp_diag_form}
    \mathbf{U}\mathbf{a}\mathbf{V}^\top
    &=\diag(\mathbf{U}_1,\mathbf{U}_2)\,\diag(\mathbf{u}_1,\dots,\mathbf{u}_{\len(\mathbf{a})})\,\diag(\mathbf{V}_1,\mathbf{V}_2)\nonumber\\
&=\diag(\mathbf{U}_1\diag(\mathbf{u}_1,\dots,\mathbf{u}_\alpha)\mathbf{V}_1,\mathbf{U}_2\diag(\mathbf{u}_{\alpha+1},\dots,\mathbf{u}_{\len(\mathbf{a})})\mathbf{V}_2)\\
&=:\diag(\mathbf{x}^{(\alpha)},\mathbf{y}^{(\alpha)}),\nonumber
\end{align}
where different $\alpha$ yield  $\mathbf{x}^{(\alpha)}$ and $\mathbf{y}^{(\alpha)}$ of varying size.

Conversely, with $\size(\mathbf{u}_i)=(u_i,v_i)$ and cumulative sum $\underline{u}_i:=u_1+\dots+u_i$, 
if 
\begin{align*}\mathbf{U}\mathbf{a}\mathbf{V}^\top
&=\bigstar_{1\leq s\leq \len(\mathbf{a})}
\begin{bmatrix}\e{\mu_{(\underline{u}_{(s-1)}+1)}}&\cdots&\e{\mu_{\underline{u}_{s}}}\end{bmatrix}\mathbf{v}_s
\begin{bmatrix}\e{\iota_{(\underline{v}_s+1)}}&\cdots&\e{\iota_{\underline{v}_{(s+1)}}}\end{bmatrix}^\top\\
&=\diag(\mathbf{x},\mathbf{y})
\end{align*}
with $\size(\mathbf{x})=(j_1,k_1)$ then all $s$-indexed factors 
\[\begin{bmatrix}\e{\mu_{(\underline{u}_{(s-1)}+1)}}&\cdots&\e{\mu_{\underline{u}_{s}}}\end{bmatrix}\mathbf{v}_s
\begin{bmatrix}\e{\iota_{(\underline{v}_s+1)}}&\cdots&\e{\iota_{\underline{v}_{(s+1)}}}\end{bmatrix}^\top
=:\diag(\mathbf{x}_1^{(s)},\mathbf{y}_1^{(s)})\]
decompose with $\size(\mathbf{x}_1^{(s)})=(j_1,k_1)$.
Furthermore,  ${\mathbf{x}}_1^{(s)}$ has all entries equal
to $\varepsilon$ or ${\mathbf{y}}_1^{(s)}$ does.
With $\mathbf{U}\mathbf{a}\mathbf{V}^\top\in\composition$ and increasing $\mu$ and $\iota$, there is a unique  $0\leq\alpha\leq\len(\mathbf{a})$ such that $\mathbf{x}_1^{(1)},\ldots,\mathbf{x}_1^{(\alpha)}$ have entries different from $\varepsilon$, and $\mathbf{x}_1^{(\alpha+1)},\ldots,\mathbf{x}_1^{(\len(\mathbf{a}))}$ have all entries equal to $\varepsilon$.
Therefore, $\mathbf{U}$ and $\mathbf{V}$ are of shape (\ref{eq:def_alpha_dec}). 
If $\mathbf{U}\mathbf{a}\mathbf{V}^\top=\diag(\mathbf{x}',\mathbf{y}')$ has a second decomposition with resulting splitting $\alpha'\in{\mathfrak{S}}_{\mathbf{U},\mathbf{V}}^{\mathbf{a}}$ then $\rows(\mathbf{x})<\rows(\mathbf{x}')$ and $\cols(\mathbf{x})<\cols(\mathbf{x}')$  without loss of generality, thus $\alpha<\alpha'$. 
\end{proof}
\index[general]{Delta@$\widetilde\Delta$}

\begin{proof}[Proof of \Cref{lem:Phi_hom_coprod}]
Let $\mathbf{a}=\diag({\mathbf{u}}_1,\dots,{\mathbf{u}}_a)$ with $\len(\mathbf{a})=a$, and for simplicity,  \[\widetilde\Delta(\mathbf{b}):=\Delta(\mathbf{b})-\mathbf{b}\otimes\ec-\ec\otimes\mathbf{b}\] for every $\mathbf{b}$. Then, with \Cref{lem:splittings}, 
\begin{align*}
\widetilde\Delta\circ\Phi(\mathbf{a})
&=\sum_{u,v}\sum_{\substack{\mathbf{U}\in \brew(u,\rows(\mathbf{a}))\\\mathbf{V}\in \brew(v,\cols(\mathbf{a}))}}\frac{1}{\mathbf{U}!\mathbf{V}!}\,\Delta(\mathbf{U}\mathbf{a}\mathbf{V}^\top)-\Phi(\mathbf{a})\otimes\ec-\ec\otimes\Phi(\mathbf{a})\\
&=\sum_{u,v}\sum_{\substack{\mathbf{U}\in \brew(u,\rows(\mathbf{a}))\\\mathbf{V}\in \brew(v,\cols(\mathbf{a}))}}\frac{1}{\mathbf{U}!\mathbf{V}!}\sum_{\substack{\mathbf{x},\mathbf{y}\in\composition\setminus\{\ec\}\\\diag(\mathbf{x},\mathbf{y})=\mathbf{UaV}^\top}}\,\mathbf{x}\otimes\mathbf{y}\\
&\overset{(\ref{eq:UaV_alpha_decomp_diag_form})}{=}
\sum_{u,v}\sum_{\mathbf{U},\mathbf{V}}\frac{1}{\mathbf{U}!\mathbf{V}!}\sum_{\alpha\in\mathfrak{S}_{\mathbf{U},\mathbf{V}}^{\mathbf{a}}}
\mathbf{U}_1\diag(\mathbf{u}_1,\dots,\mathbf{u}_\alpha)\mathbf{V}_{1}^\top
\otimes
\mathbf{U}_{2}\diag(\mathbf{u}_{\alpha+1},\dots,\mathbf{u}_a)\mathbf{V}_{2}^\top\\
&\overset{\ref{lem:factorial_of_brew_multiplic}}{=}\sum_{u,v}\sum_{\mathbf{U},\mathbf{V}}\sum_{\substack{1\leq\alpha<a\\\text{such that }\mathbf{U},\mathbf{V}\\
\alpha\text{-decomposable}}}\frac{\mathbf{U}_{1}\diag(\mathbf{u}_1,\dots,\mathbf{u}_\alpha)\mathbf{V}_{1}^\top}
{\mathbf{U}_{1}!\mathbf{V}_{1}!}
\otimes
\frac{\mathbf{U}_{2}\diag(\mathbf{u}_{\alpha+1},\dots,\mathbf{u}_\alpha)\mathbf{V}_{2}^\top}
{\mathbf{U}_{2}!\mathbf{V}_{2}!}\\
&=\sum_{1\leq\alpha<a}\sum_{u,v}\sum_{\substack{\mathbf{U},\mathbf{V}\\\alpha\text{-decomposable}}}
\frac{\mathbf{U}_{1}\diag(\mathbf{u}_{1},\dots,\mathbf{u}_\alpha)\mathbf{V}_{1}^\top}
{\mathbf{U}_{1}!\mathbf{V}_{1}!}
\otimes
\frac{\mathbf{U}_{2}\diag(\mathbf{u}_{\alpha+1},\dots,\mathbf{u}_a)\mathbf{V}_{2}^\top}
{\mathbf{U}_{2}!\mathbf{V}_{2}!}\\
&\overset{\ref{lem:diag_of_brew_multiplic}}{=}\sum_{1\leq\alpha<a}\left(\sum_{\substack{u_1\\v_1}}\sum_{\substack{\mathbf{U}_1\\\mathbf{V}_1}}
\frac{\mathbf{U}_{1}\diag(\mathbf{u}_{1},\dots,\mathbf{u}_\alpha)\mathbf{V}_{1}^\top}
{\mathbf{U}_{1}!\mathbf{V}_{1}!}\right)
\otimes
\left(
\sum_{\substack{u_2\\v_2}}\sum_{\substack{\mathbf{U}_2\\\mathbf{V}_2}}
\frac{\mathbf{U}_{2}\diag(\mathbf{u}_{\alpha+1},\dots,\mathbf{u}_a)\mathbf{V}_{2}^\top}
{\mathbf{U}_{2}!\mathbf{V}_{2}!}\right)
\\
&=\sum_{1\leq\alpha<a}\Phi(\diag(\mathbf{u}_1,\dots,\mathbf{u}_\alpha))\otimes\Phi(\diag(\mathbf{u}_{\alpha+1},\dots,\mathbf{u}_a))\\
&=(\Phi\otimes\Phi)\circ\widetilde\Delta(\mathbf{a})
\end{align*}
and thus the claim with $\Phi(\ec)=\ec$. 
\end{proof}

Putting it all together, we can prove the main result of this section. 

\begin{proof}[Proof of \Cref{thm_iso_sh_qsh}.]
     Use \Cref{def:Phi} and \Cref{lem:Phi_hom,lem:Phi_inv,lem:Phi_hom_coprod}.
\end{proof}

\section{Implicit isomorphisms of Hopf algebras}
\label{sec:Implicit}
Let us recall from \Cref{rem:identificationdiagAndStringCon} that as a coalgebra, \(H=\groundRing\langle \compositionConnected\rangle\) is isomorphic to a deconcatenation coalgebra, and therefore it is cofree over \(\compositionConnected\).
This means in particular that we have some tools at our disposal for defining Hopf maps to and from \(H\).
We denote by \(\pi:H\to\compositionConnected\) the projection, orthogonal to
\[
  \bigoplus_{\mathbf{a}\in\composition\setminus\compositionConnected}\groundRing\mathbf{a}.
\]
We note that for any \(\mathbf{a},\mathbf{b}\in\composition\),
\begin{equation}\label{eq:pi.shf.zero}
  \pi(\mathbf{a}\blockShuffle\mathbf{b})=0.
\end{equation}
\index[general]{pi@$\pi$}
Let \((C,\Delta_C)\) be any graded connected coalgebra.
By cofreeness of $H$,
a coalgebra homomorphism \(\Psi: C\to H\) is characterized by its projectiol 
$$\psi\coloneq\pi\circ\Psi\colon C^+\to\compositionConnected,$$
where \(C^+=\ker\epsilon_C\) denotes the augmentation ideal, and moreover
\[
  \Psi = \sum_{n\ge 1}\mathrm{diag}\circ\psi^{\otimes n}\circ\widetilde{\Delta}_C^{(n-1)}.
\]
Further still, \(\Psi\) is a coalgebra isomorphism if and only if \(\psi\) is a linear isomorphism.

As a consequence, two coalgebra morphisms \(\Psi,\Psi': C\to H\) coincide if and only if \(\psi=\psi'\).
Applying this to \(C=H\otimes H\), and the maps \(\blockShuffle\circ(\Psi\otimes\Psi)\) and \(\Psi\circ\shuffle\), we see that a coalgebra morphism \(\Psi\) is a bialgebra morphism, if and only if these are equal. 
By cofreeness and \cref{eq:pi.shf.zero} this condition is equivalent to requiring that for any \(\mathbf{a},\mathbf{b}\in\composition\),
\begin{equation}\label{eq:bialg.cond}
  \psi(\mathbf{a}\shuffle\mathbf{b})=0.
\end{equation}

One can make use of this condition together with certain maps that decompose elements in \(H\) as two-parameter shuffle polynomials to obtain bialgebra (hence Hopf algebra) homomorphisms from \((H,\shuffle)\) to \((H,\blockShuffle)\).
\subsection{Leray's theorem}
\index[general]{starcon@$*$}
\index[general]{euler@$\mathfrak{e}$}
\index[general]{mu@$\mu$}
\index[general]{epsilon@$\epsilon$}
In this section \(m\) denotes either \(\shuffle\), \(\qShuffle\) or \(\blockShuffle\).
For $f,g\in\End(H)$ we have the convolution product 
\[
  f*g := m\circ(f\otimes g)\circ\Delta\in\End(H),
\]
and in particular, $f^{*0}:=\id$ and $f^{*k}:=f*f^{*(k-1)}$ for every $k\in\N$. The mapping $\id-\mu\circ\epsilon$ with counit $\epsilon$ and unit $\mu$ is \DEF{$*$-nilpotent}, i.e., for all $a\in H$ there is $k\in\N$ such that $f^{*k}(a)=0$.
This defines the \DEF{Eulerian idempotent}
\[\mathfrak{e}:=\log^*(\id):=\sum_{k\geq 1}\frac{{(-1)}^{k-1}}{k}{(\id-\mu\circ\epsilon)}^k\in\End(H).\]

Note that \[{(\ker\epsilon)}^2:=\{m(a\otimes b)\mid a,b\in\ker\epsilon\}\]
is a $\groundRing$-submodule of $\ker\epsilon$. 

For the following two results see \cite[Sec.~7]{milnor1965structure}, \cite[Sec.~4.3] {cartier2021classical} or \cite[Thm.~1.7.29]{grinberg2020hopf}.
\begin{lemma}\label{lem:euler}
We have 
    \begin{enumerate}
        \item $\ker\mathfrak{e}=\groundRing+{(\ker\epsilon)}^2$ and  
        \item  $\Ima\mathfrak{e}\cong\faktor{(\ker\epsilon)}{{(\ker\epsilon)}^2}$ as $\groundRing$-modules. 
    \end{enumerate}
\end{lemma}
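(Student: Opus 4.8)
The plan is to prove \Cref{lem:euler} by recalling the structure theory of the Eulerian idempotent on a graded connected Hopf algebra, and then checking that the two-parameter shuffle algebra $(H,m,\Delta)$ satisfies the hypotheses. First I would observe that for any graded connected bialgebra the map $\id-\mu\circ\epsilon$ vanishes on the degree-zero part $\groundRing$ and lands in the augmentation ideal $\ker\epsilon$, so that $\mathfrak{e}=\log^*(\id)$ is well defined by $*$-nilpotency (already noted in the excerpt). I would then record the two standard facts that drive everything: $\mathfrak{e}$ is a projection ($\mathfrak{e}\circ\mathfrak{e}=\mathfrak{e}$ after composing with the Hopf-algebra product, i.e. an idempotent in the convolution algebra restricted appropriately), and $\mathfrak{e}$ annihilates $\groundRing$ and all products $m(a\otimes b)$ with $a,b\in\ker\epsilon$, i.e. $\mathfrak{e}$ kills $(\ker\epsilon)^2$.

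For part (1), the inclusion $\groundRing+(\ker\epsilon)^2\subseteq\ker\mathfrak{e}$ follows immediately from $\mathfrak{e}(\groundRing)=0$ and the fact that $\mathfrak{e}$ is a derivation-like projection vanishing on products of augmentation-ideal elements; the classical reference is the Milnor--Moore / Cartier computation cited in the excerpt, which shows $\mathfrak{e}\circ m|_{(\ker\epsilon)^{\otimes 2}}=0$. For the reverse inclusion $\ker\mathfrak{e}\subseteq\groundRing+(\ker\epsilon)^2$ I would use the decomposition $\id=\mathfrak{e}_1+\mathfrak{e}_2+\cdots$ of the identity into the Eulerian idempotents $\mathfrak{e}_k=\frac{1}{k!}\mathfrak{e}^{*k}$ (valid since $H$ is a commutative Hopf algebra over a $\Q$-algebra), together with the fact that $\mathfrak{e}_k$ for $k\ge 2$ factors through $(\ker\epsilon)^2$; hence any $x$ with $\mathfrak{e}(x)=\mathfrak{e}_1(x)=0$ lies in $\groundRing\oplus(\ker\epsilon)^2$. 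For part (2), the restriction of $\mathfrak{e}$ to $\ker\epsilon$ is a projection with kernel exactly $(\ker\epsilon)^2$ by part (1), so the first isomorphism theorem for $\groundRing$-modules gives $\Ima\mathfrak{e}=\mathfrak{e}(\ker\epsilon)\cong\ker\epsilon/(\ker\epsilon)^2$.

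Since the excerpt explicitly says ``for the following two results see \cite[Sec.~7]{milnor1965structure}, \cite[Sec.~4.3]{cartier2021classical} or \cite[Thm.~1.7.29]{grinberg2020hopf}'', the cleanest route is simply to invoke these references, noting that $(H,m,\Delta)$ with $m\in\{\shuffle,\qShuffle,\blockShuffle\}$ is a graded, connected, commutative Hopf algebra over the $\Q$-algebra $\groundRing$ (by the theorems already established earlier in the paper), so the cited structure theorems apply verbatim. The only genuine obstacle is bookkeeping: making sure the grading is such that $*$-nilpotency holds uniformly on each graded piece (it does, because $H$ is connected and graded by the weight of compositions, and $(\id-\mu\circ\epsilon)^{*k}$ strictly decreases the coradical filtration degree), and confirming that the ambient ring being merely a commutative $\Q$-algebra rather than a field does not break the argument — it does not, because all the relevant maps are $\groundRing$-linear and the denominators appearing ($k$, $k!$) are invertible in any $\Q$-algebra. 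I expect no serious difficulty beyond citing the references correctly and stating that the hypotheses are met.
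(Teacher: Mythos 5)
Your proposal is correct and matches the paper, which gives no proof of its own but simply points to \cite[Sec.~7]{milnor1965structure}, \cite[Sec.~4.3]{cartier2021classical} and \cite[Thm.~1.7.29]{grinberg2020hopf}; your verification that $(H,m,\Delta)$ is a graded connected commutative Hopf algebra over a $\Q$-algebra, so that the cited structure theory applies, together with the sketch via the higher Eulerian idempotents $\mathfrak{e}_k=\frac{1}{k!}\mathfrak{e}^{*k}$ and the first isomorphism theorem, is exactly the intended route. One minor slip: $\mathfrak{e}$ is idempotent under \emph{composition}, not in the convolution algebra (where $\mathfrak{e}*\mathfrak{e}=2\mathfrak{e}_2$), but this fact is not actually used in your argument, so nothing breaks.
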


\begin{theorem}[Leray]\label{thm:leray}
    $H\cong \Sym(\Ima\mathfrak{e})$ as $\groundRing$-algebras. 
\end{theorem}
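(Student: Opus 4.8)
The plan is to apply the classical Leray theorem (also known as the Leray--Samelson theorem, or the Milnor--Moore / Hopf--Borel circle of results) to the commutative, graded, connected Hopf algebra $(H,m,\Delta)$, which is legitimate because $\groundRing$ is a commutative $\Q$-algebra, so we are in characteristic zero. The statement $H\cong\Sym(\Ima\mathfrak{e})$ is precisely the conclusion of, e.g., \cite[Thm.~1.7.29]{grinberg2020hopf} once we identify the space of primitives with $\Ima\mathfrak{e}$; the citations given in the excerpt already contain this. So the "proof" is really a short derivation assembling \Cref{lem:euler} with the abstract structure theorem.

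First I would record that $(H,m,\Delta)$ is graded, connected, and commutative in each of the three cases $m\in\{\shuffle,\qShuffle,\blockShuffle\}$ -- this is exactly the content of the Hopf algebra theorems stated earlier in the paper (the unnamed theorem on $(H,\shuffle,\Delta)$, \Cref{thm:1paramHopfAlg} for $\blockShuffle$, and \cite[Thm.~2.12]{DS23} for $\qShuffle$). Then I would invoke \Cref{lem:euler}: part (2) gives $\Ima\mathfrak{e}\cong(\ker\epsilon)/(\ker\epsilon)^2$ as $\groundRing$-modules, and this quotient is the module of indecomposables $Q(H)$. For a graded connected commutative Hopf algebra over a $\Q$-algebra, the Eulerian idempotent $\mathfrak{e}$ projects $H$ onto the subspace of primitive elements $\Prim(H)$, and the structure theorem asserts that the inclusion $\Prim(H)\hookrightarrow H$ extends to an isomorphism of algebras $\Sym(\Prim(H))\xrightarrow{\ \sim\ }H$. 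Combining, $H\cong\Sym(\Ima\mathfrak{e})$.

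Concretely the steps are: (i) cite the relevant Hopf algebra theorem so that the hypotheses of the abstract result are in force; (ii) note $\mathfrak{e}$ is well defined by $*$-nilpotency (already observed in the excerpt) and that $\mathfrak{e}^2=\mathfrak{e}$ with image inside $\Prim(H)$ -- or alternatively just cite that $\Ima\mathfrak{e}$ is the primitive space, which is part of the same references; (iii) apply the Leray/Milnor--Moore structure theorem for cocommutative-or-commutative graded connected Hopf algebras in characteristic zero to get $H\cong\Sym(\Prim(H))$ as $\groundRing$-algebras; (iv) conclude via \Cref{lem:euler}(2), or more directly via $\Ima\mathfrak{e}=\Prim(H)$, that $H\cong\Sym(\Ima\mathfrak{e})$.

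The main obstacle -- really the only subtlety -- is making sure the coefficient ring is handled correctly: the structure theorem requires $\Q\subseteq\groundRing$, which holds by hypothesis, and one must be slightly careful that "$\Sym$" is taken of the graded module $\Ima\mathfrak{e}$ so that the grading is respected; the graded pieces of $H$ are finite rank (each $\composition$ in a fixed weight is finite), so there are no completion issues and the symmetric algebra is the honest one. Everything else is a direct appeal to the cited literature, so the proof will be only a few lines.

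\begin{proof}[Proof of \Cref{thm:leray}]
The Hopf algebra $(H,m,\Delta)$ is graded, connected and commutative (see the Hopf algebra theorems of \Cref{sec:Lyndongens} in the cases $m\in\{\shuffle,\blockShuffle\}$, and \cite[Thm.~2.12]{DS23} for $m=\qShuffle$), and $\Q\subseteq\groundRing$. By $*$-nilpotency the Eulerian idempotent $\mathfrak{e}$ is well defined, and $\Ima\mathfrak{e}$ is the $\groundRing$-module of primitive elements of $H$; see \cite[Sec.~7]{milnor1965structure}, \cite[Sec.~4.3]{cartier2021classical} or \cite[Thm.~1.7.29]{grinberg2020hopf}. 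The classical structure theorem for graded connected commutative Hopf algebras in characteristic zero then yields an isomorphism of graded $\groundRing$-algebras $\Sym(\Ima\mathfrak{e})\xrightarrow{\ \sim\ }H$ induced by the inclusion $\Ima\mathfrak{e}\hookrightarrow H$; each graded component of $H$ is of finite rank, so no completion is needed. This is the asserted isomorphism.
\end{proof}
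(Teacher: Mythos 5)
Your overall strategy---reduce to the classical structure theorem in the cited literature---is the same as the paper's, which in fact offers no proof at all beyond the citations to Milnor--Moore, Cartier--Patras and Grinberg--Reiner. However, your justification contains a genuine error in identifying \emph{which} classical theorem applies. You claim that $\Ima\mathfrak{e}=\Prim(H)$ and invoke the statement ``$H\cong\Sym(\Prim(H))$ for graded connected commutative Hopf algebras in characteristic zero.'' That statement requires \emph{cocommutativity} (it is the Leray--Samelson/Hopf--Borel theorem for commutative \emph{and} cocommutative Hopf algebras, or the commutative Milnor--Moore theorem), and the deconcatenation coproduct $\Delta$ here is not cocommutative. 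Concretely, the primitives of $(H,m,\Delta)$ are exactly $\groundRing\compositionConnected$ (a reduced coproduct of a length-$\ell$ composition with $\ell\ge 2$ never vanishes on a nonzero linear combination), and $\Sym(\groundRing\compositionConnected)$ is strictly smaller than $H$ already in length $2$; so $H\not\cong\Sym(\Prim(H))$. Moreover $\Ima\mathfrak{e}\not\subseteq\Prim(H)$: the paper's own example following the theorem computes
\[
\mathfrak{e}\left(\begin{bmatrix}\w{1}&\varepsilon\\\varepsilon&\w{2}\end{bmatrix}\right)
=\begin{bmatrix}\w{1}&\varepsilon\\\varepsilon&\w{2}\end{bmatrix}-\tfrac12\,[\w{1}]\shuffle[\w{2}],
\]
whose reduced coproduct equals $\tfrac12\left([\w{1}]\otimes[\w{2}]-[\w{2}]\otimes[\w{1}]\right)\neq 0$.

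The correct route, which is what the pairing of \Cref{lem:euler} with \Cref{thm:leray} in the paper is signalling, goes through the \emph{indecomposables} rather than the primitives: Leray's theorem states that for a graded connected \emph{commutative} Hopf algebra over a commutative $\Q$-algebra, any graded submodule $V\subseteq\ker\epsilon$ complementary to $(\ker\epsilon)^2$ (equivalently, any graded section of $\ker\epsilon/(\ker\epsilon)^2$) induces an algebra isomorphism $\Sym(V)\xrightarrow{\ \sim\ }H$. \Cref{lem:euler} supplies exactly such a $V=\Ima\mathfrak{e}$, since $\ker\mathfrak{e}=\groundRing+(\ker\epsilon)^2$ and $\Ima\mathfrak{e}\cong(\ker\epsilon)/(\ker\epsilon)^2$. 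If you replace your step (ii)--(iii) by this (and drop every mention of $\Prim(H)$), the argument is correct and matches the intent of the paper; your remarks about $\Q\subseteq\groundRing$ and the finite-rank graded pieces are fine and harmless.
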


If $\groundRing$ is a field and $B$ a $\groundRing$-basis of $\Ima\mathfrak{e}$, then 
$H\cong\groundRing[B]$ as $\groundRing$-algebras, where $\groundRing[B]$ denotes the polynomial algebra over $B$ considered as a set of symbols. 

By definition the identity
\[
  \mathrm{id}=\exp^*(\mathfrak{e})
\]
holds, thus we get an explicit decomposition of any matrix composition as a polynomial in elements of \(\Ima\mathfrak{e}\),
namely
\[
  \mathbf{u}_1\dots\mathbf{u}_\ell=\sum_{I\in\mathcal{C}(\ell)}\frac{1}{\len(I)!}m\left(\mathfrak{e}(\mathbf{u}_1\dots\mathbf{u}_{I_1})\otimes\dots\otimes\mathfrak{e}(\mathbf{u}_{I_1+\dotsb+I_{\len(I)-1}+1}\dots\mathbf{u}_\ell)\right).
\]

From \Cref{lem:euler} we see that \(\mathfrak{e}\) fulfills condition \eqref{eq:bialg.cond}.
However, it is not always true that \(\Ima\mathfrak{e}\subset\compositionConnected\), as the following example shows.
\begin{example}
Let \(m\) be the two-parameter shuffle product \(\shuffle\). Then
 
  \begin{align*}
    \mathfrak{e}\left( \begin{bmatrix}\w{1}&\varepsilon\\\varepsilon&\w{2}\end{bmatrix} \right) &= \begin{bmatrix}\w{1}&\varepsilon\\\varepsilon&\w{2}\end{bmatrix}-\frac12\left( \begin{bmatrix}\w{1}&\varepsilon\\\varepsilon&\w{2}\end{bmatrix}+\begin{bmatrix}\varepsilon&\w{1}\\\w{2}&\varepsilon\end{bmatrix}+\begin{bmatrix}\varepsilon&\w{2}\\\w{1}&\varepsilon\end{bmatrix}+\begin{bmatrix}\w{2}&\varepsilon\\\varepsilon&\w{1}\end{bmatrix} \right)  \\
    &= \frac12\left(\begin{bmatrix}\w{1}&\varepsilon\\\varepsilon&\w{2}\end{bmatrix}-\begin{bmatrix}\w{2}&\varepsilon\\\varepsilon&\w{1}\end{bmatrix}\right)-\frac12\begin{bmatrix}\varepsilon&\w{1}\\\w{2}&\varepsilon\end{bmatrix}-\frac12\begin{bmatrix}\varepsilon&\w{2}\\\w{1}&\varepsilon\end{bmatrix}\not\in\compositionConnected.
  \end{align*}
\end{example}

Combining \Cref{thm:leray} with results from the previous section we obtain the following result.
\begin{theorem}\label{trm:isoMagic}
  Let \(\Log_\shuffle^\blockShuffle: (H,\shuffle)\to (H,\blockShuffle)\) be the unique bialgebra map induced by \(\pi\circ\mathfrak{e}\).
  It is an isomorphism, and its inverse 
  $$\Exp_\blockShuffle^\shuffle:(H,\blockShuffle)\to(H,\shuffle)$$ 
  is recursively given by
  \[
    \pi_n\left(\Exp_\blockShuffle^\shuffle(\pi_m(\mathbf{a}))\right) = -\sum_{k=n}^m\pi_n\circ{\Exp_\blockShuffle^\shuffle}\circ\pi_k\circ{\Log_\shuffle^\blockShuffle}\circ\pi_m(\mathbf{a}),\quad n\le m,
  \]
  where for \(k\in\mathbb{N}\), \(\pi_k: H\to\compositionConnected^k\) denotes the canonical projection onto the space of matrix compositions with exactly \(k\) connected blocks.
  The same holds after replacing \(\shuffle\) by \(\qShuffle\).
\end{theorem}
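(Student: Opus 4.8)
The plan is to derive Theorem~\ref{trm:isoMagic} as a corollary of the coalgebra-cofreeness machinery set up in this section, together with Leray's theorem (\Cref{thm:leray}) and the key observation \eqref{eq:bialg.cond}. Recall the general principle established above: because $H=\groundRing\langle\compositionConnected\rangle$ is cofree over $\compositionConnected$, any linear map $\psi\colon C^+\to\compositionConnected$ from the augmentation ideal of a graded connected coalgebra $C$ extends uniquely to a coalgebra morphism $\Psi\colon C\to H$ via $\Psi=\sum_{n\ge1}\diag\circ\psi^{\otimes n}\circ\widetilde\Delta_C^{(n-1)}$, and $\Psi$ is a coalgebra isomorphism precisely when $\psi$ is a linear isomorphism; moreover $\Psi$ is a bialgebra morphism from $(H,\shuffle)$ to $(H,\blockShuffle)$ iff $\psi(\mathbf{a}\shuffle\mathbf{b})=0$ for all $\mathbf{a},\mathbf{b}\in\composition$, i.e.\ iff $\psi$ vanishes on $(\ker\epsilon)^2$.

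\textbf{Step 1 (the map is well-defined and a bialgebra morphism).} Apply the principle with $\psi\coloneq\pi\circ\mathfrak{e}$, where $\mathfrak{e}$ is the Eulerian idempotent for the $\shuffle$-structure. By \Cref{lem:euler}(1), $\ker\mathfrak{e}=\groundRing+(\ker\epsilon)^2$, so $\mathfrak{e}$ — and hence $\psi$ — annihilates $(\ker\epsilon)^2$; in particular $\psi(\mathbf{a}\shuffle\mathbf{b})=0$. Thus the induced coalgebra morphism, which we name $\Log_\shuffle^\blockShuffle$, is automatically a bialgebra morphism (hence a Hopf morphism, since both sides are graded connected).

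\textbf{Step 2 (bijectivity).} It suffices, by the cofreeness criterion, to show $\psi=\pi\circ\mathfrak{e}\colon (\ker\epsilon)\to\compositionConnected$ descends to a \emph{linear isomorphism} $(\ker\epsilon)/(\ker\epsilon)^2\xrightarrow{\ \sim\ }\compositionConnected$. The descent is Step~1. For the isomorphism: by \Cref{lem:euler}(2), $\Ima\mathfrak{e}\cong(\ker\epsilon)/(\ker\epsilon)^2$, and by Leray (\Cref{thm:leray}) $H\cong\Sym(\Ima\mathfrak{e})$, so a graded-dimension count (in each bidegree, using that the extended alphabet $\compositionConnected$ carries the same grading on both sides) shows $\dim_{\text{bidegree}}\Ima\mathfrak{e}=\dim_{\text{bidegree}}\compositionConnected$ — indeed $\compositionConnected$ is exactly the degree-$1$ part (one connected block) of the free/polynomial presentation, and $\Sym$ of it reproduces all of $H=\groundRing\langle\compositionConnected\rangle$. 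Hence $\pi$ restricted to $\Ima\mathfrak{e}$ is injective on each block-degree-$1$ slice and, for degree reasons together with \eqref{eq:pi.shf.zero}, surjective onto $\compositionConnected$; concretely, $\pi\circ\mathfrak{e}(\mathbf{u})=\mathbf{u}$ for $\mathbf{u}\in\compositionConnected$ modulo higher-block corrections, so the map is unitriangular with respect to the block-length filtration and therefore invertible. This gives invertibility of $\Log_\shuffle^\blockShuffle$.

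\textbf{Step 3 (the recursion for the inverse).} Write $\Exp_\blockShuffle^\shuffle\coloneq(\Log_\shuffle^\blockShuffle)^{-1}$. Since $\Log_\shuffle^\blockShuffle=\id+(\text{strictly block-length-decreasing part})$ — this is the content of $\pi\circ\mathfrak{e}(\mathbf{u})=\mathbf{u}+\dots$ above, i.e.\ $\pi_n\circ\Log_\shuffle^\blockShuffle\circ\pi_m=0$ for $n>m$ and $=\pi_n$ for $n=m$ — the identity $\pi_n\circ\Exp_\blockShuffle^\shuffle\circ\Log_\shuffle^\blockShuffle\circ\pi_m=\pi_n\circ\pi_m=\delta_{nm}\pi_n$, expanded by inserting $\sum_{k}\pi_k$ between the two maps and isolating the $k=m$ term (which contributes $\pi_n\circ\Exp_\blockShuffle^\shuffle\circ\pi_m$ after using the triangularity), yields exactly
\[
  \pi_n\bigl(\Exp_\blockShuffle^\shuffle(\pi_m(\mathbf{a}))\bigr)=-\sum_{k=n}^{m}\pi_n\circ\Exp_\blockShuffle^\shuffle\circ\pi_k\circ\Log_\shuffle^\blockShuffle\circ\pi_m(\mathbf{a}),\qquad n\le m,
\]
where on the right the occurrences of $\Exp_\blockShuffle^\shuffle$ are applied only to block-degrees $k<m$ strictly (the $k=m$ term having been moved to the left side), so the recursion is well-founded and determines $\Exp_\blockShuffle^\shuffle$ uniquely. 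Finally, for the $\qShuffle$ case: \Cref{thm_iso_sh_qsh} already gives a Hopf isomorphism $(H,\shuffle,\Delta)\cong(H,\qShuffle,\Delta)$, and \Cref{lem:block_shuffle_and_2param_qShuffle} shows $\qShuffle$ has the same block-shuffle leading behaviour, so the identical argument applies verbatim with $\shuffle$ replaced by $\qShuffle$.

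\textbf{Main obstacle.} The delicate point is Step~2: showing $\pi\circ\mathfrak{e}$ induces a genuine \emph{isomorphism} $(\ker\epsilon)/(\ker\epsilon)^2\cong\compositionConnected$ rather than merely a surjection. One must be careful that Leray's isomorphism $H\cong\Sym(\Ima\mathfrak{e})$ is compatible with the block-length grading in the way needed, and that the primitive-like space $\Ima\mathfrak{e}$ is not "too big". The cleanest route is the unitriangularity observation — that $\Log_\shuffle^\blockShuffle$ acts as the identity plus strictly block-length-lowering terms, which follows because on a single connected block $\mathbf{u}\in\compositionConnected$ one has $\Delta\mathbf{u}=\mathbf{u}\otimes\ec+\ec\otimes\mathbf{u}$, hence $\mathfrak{e}(\mathbf{u})=\mathbf{u}$ and $\pi\circ\mathfrak{e}(\mathbf{u})=\mathbf{u}$ — and then deduce bijectivity from invertibility of the resulting triangular operator, bypassing any dimension count entirely.
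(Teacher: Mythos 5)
Your proposal is correct and follows essentially the same route as the paper: condition \eqref{eq:bialg.cond} holds because $\ker\mathfrak{e}\supseteq(\ker\epsilon)^2$, bijectivity follows because connected compositions are primitive so $\pi\circ\mathfrak{e}$ restricts to the identity on $\compositionConnected$, making $\Log_\shuffle^\blockShuffle$ unitriangular with respect to block length, and the recursion is the standard inversion of such a triangular operator. The Leray dimension-count detour in your Step~2 is unnecessary (and somewhat circular as written), but you correctly identify the triangularity argument as the one that actually closes the proof, which is exactly what the paper does.
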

\begin{proof}
  Note that the restriction of \(\Log_\shuffle^\blockShuffle\) to \(\compositionConnected\) is the identity map, thus it is a linear isomorphism.
  Hence, it is an isomorphism of algebras.
  The formula for \(\Exp_\blockShuffle^\shuffle\) follows from a general expression for inversion of lower-triangular maps.
\end{proof}
\begin{remark}
    Note that in the terminology of \cite[Prop. 4.1]{Hoffman_2017} the map \(\pi\circ\mathfrak{e}\) is a contraction, and the map \(Log_\shuffle^\blockShuffle\) is the corresponding expansion.
\end{remark}
In particular, by considering appropriate compositions of the exponential and logarithm maps we obtain:
\begin{corollary}\label{thm:isos_Nikolas_Magic}
    We have $(H,\blockShuffle,\Delta)\cong(H,\shuffle,\Delta)\cong(H,\qShuffle,\Delta)$ as Hopf algebras. 
\end{corollary}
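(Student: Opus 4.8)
The plan is to deduce \Cref{thm:isos_Nikolas_Magic} directly from \Cref{trm:isoMagic} by chaining together the three bialgebra isomorphisms it provides. Concretely, \Cref{trm:isoMagic} asserts (reading it once with $\shuffle$ and once with $\qShuffle$) that $\Log_\shuffle^\blockShuffle\colon(H,\shuffle)\to(H,\blockShuffle)$ and $\Log_\qShuffle^\blockShuffle\colon(H,\qShuffle)\to(H,\blockShuffle)$ are both isomorphisms of Hopf algebras; the underlying coalgebra $(H,\Delta)$ is the same in all three cases, so these are genuine Hopf-algebra maps in the sense that they intertwine $\Delta$ with itself. Composing one with the inverse of the other then yields Hopf-algebra isomorphisms between any two of $(H,\blockShuffle,\Delta)$, $(H,\shuffle,\Delta)$ and $(H,\qShuffle,\Delta)$, which is exactly the claim.

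The steps, in order, would be: first, invoke \Cref{trm:isoMagic} to record that $\Log_\shuffle^\blockShuffle$ is a bialgebra isomorphism $(H,\shuffle,\Delta)\xrightarrow{\sim}(H,\blockShuffle,\Delta)$; second, invoke the last sentence of \Cref{trm:isoMagic} (``the same holds after replacing $\shuffle$ by $\qShuffle$'') to get a bialgebra isomorphism $\Log_\qShuffle^\blockShuffle\colon(H,\qShuffle,\Delta)\xrightarrow{\sim}(H,\blockShuffle,\Delta)$; third, observe that $H$ is graded and connected, so these bialgebra maps are automatically Hopf-algebra maps (the antipode is determined by the bialgebra structure in the graded connected case); fourth, form the composite $\Exp_\blockShuffle^\shuffle\circ\Log_\qShuffle^\blockShuffle$, noting that the inverse of a Hopf-algebra isomorphism is again one, to obtain $(H,\qShuffle,\Delta)\cong(H,\shuffle,\Delta)$, and similarly $(H,\blockShuffle,\Delta)\cong(H,\shuffle,\Delta)$ directly. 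This gives the displayed chain of isomorphisms.

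There is essentially no obstacle here: the genuine content lives in \Cref{trm:isoMagic} (and before that in \Cref{thm:leray} and the analysis of the Eulerian idempotent), and the present corollary is just the bookkeeping observation that ``isomorphic to a common object'' is transitive together with the remark that inverses and composites of Hopf isomorphisms are Hopf isomorphisms. The only point that deserves a half-sentence of care is why a bialgebra isomorphism between these objects is automatically a Hopf-algebra isomorphism — this is because all three are graded connected bialgebras, hence admit unique antipodes, and any bialgebra map between graded connected bialgebras commutes with the antipodes. One could alternatively phrase the proof via \Cref{thm_iso_sh_qsh} (which already hands over $(H,\shuffle,\Delta)\cong(H,\qShuffle,\Delta)$ through $\Phi$) combined with just the single instance $\Log_\shuffle^\blockShuffle$ of \Cref{trm:isoMagic}; either route is a two-line argument. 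I would write it as: ``This follows by composing the Hopf-algebra isomorphisms of \Cref{trm:isoMagic} (respectively \Cref{thm_iso_sh_qsh}); any bialgebra isomorphism between graded connected bialgebras is a Hopf-algebra isomorphism, and inverses and composites of such maps are again Hopf-algebra isomorphisms.''
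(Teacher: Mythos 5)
Your proposal is correct and matches the paper's intended argument: the corollary is stated immediately after the remark that it follows ``by considering appropriate compositions of the exponential and logarithm maps,'' i.e.\ precisely by composing the isomorphisms $\Log_\shuffle^\blockShuffle$ and $\Log_\qShuffle^\blockShuffle$ (and their inverses) from \Cref{trm:isoMagic}. Your additional remark that bialgebra isomorphisms between graded connected bialgebras automatically respect antipodes is a correct and harmless elaboration of a point the paper leaves implicit.
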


\subsection{Application: New proof of the two-parameter bialgebra relation}\label{subsec:applic}

In this section we present a new proof of the bialgebra relation in $(H,\qShuffle,\Delta)$, using the isomorphism to the (classical, one-parameter) block shuffle algebra introduced in this article. 

\begin{proof}[{Proof of \cite[Thm.~5.24]{DS23}}.]
    With isomporphism $\Psi$ from $(H,\blockShuffle,\Delta)$ to $(H,\qShuffle,\Delta)$ according to \Cref{trm:isoMagic}, 
    \begin{align*}
        \Delta\circ\qShuffle
        &=\Delta\circ\qShuffle\circ(\Psi\otimes\Psi)\circ(\Psi^{-1}\otimes\Psi^{-1})\\
        &=\Delta\circ\Psi\circ\blockShuffle\circ(\Psi^{-1}\otimes\Psi^{-1})\\
        &=(\Psi\otimes\Psi)\circ\Delta\circ\blockShuffle\circ(\Psi^{-1}\otimes\Psi^{-1})\\
        &\overset{\ref{thm:1paramHopfAlg}}=(\Psi\otimes\Psi)\circ(\blockShuffle\otimes\blockShuffle)\circ(\id\otimes \tau\otimes\id)\circ(\Delta\otimes\Delta)\circ(\Psi^{-1}\otimes\Psi^{-1})\\
        &=(\qShuffle\otimes\qShuffle)\circ\Psi^{\otimes4}\circ(\id\otimes \,\tau\otimes\id)\circ(\Delta\otimes\Delta)\circ(\Psi^{-1}\otimes\Psi^{-1})\\
        &=(\qShuffle\otimes\qShuffle)\circ(\id\otimes \,\tau\otimes\id)\circ\Psi^{\otimes4}\circ(\Delta\otimes\Delta)\circ(\Psi^{-1}\otimes\Psi^{-1})\\
        &=(\qShuffle\otimes\qShuffle)\circ(\id\otimes \,\tau\otimes\id)\circ(\Delta\otimes\Delta)\circ(\Psi\otimes\Phi)\circ(\Psi^{-1}\otimes\Psi^{-1})\\
        &=(\qShuffle\otimes\qShuffle)\circ(\id\otimes \,\tau\otimes\id)\circ(\Delta\otimes\Delta),
    \end{align*}
    where 
    $\tau$ denotes the flip homomorphism determined by $\tau(\mathbf{a}\otimes\mathbf{b}):=\mathbf{b}\otimes\mathbf{a}$ for all $\mathbf{a},\mathbf{b}\in\composition$. 
\end{proof}

\section{Conclusions and outlook}\label{sec:conclusions}

We have used free Lyndon generators, ideas from $\mathbf{B}_\infty$-algebras, and a novel two-parameter Hoffman's exponential to provide three different classes of isomorphisms between the two-parameter shuffle and quasi-shuffle algebra. In particular, we have provided a Hopf algebraic connection to the (classical, one-parameter) shuffle algebra over the extended alphabet of connected matrix compositions.

\begin{enumerate}
    \item The here suggested machinery should be useful also for \cite{GLNO2022,ZLT22} since it provides free generators and an underlying Hopf algebra structure for two-parameter mapping space signatures. In particular, it is now possible to manage all coefficients that are algebraically dependent through shuffling. This is one of the primary reasons to consider log signatures in the one-parameter setting.
    \item  
    Already in \cite[Sec.~6]{DS23} we addressed the issue to generalize our work for $p>2$ parameters. 
    The new strategy applied in \Cref{subsec:applic} should allow  us to  work around elementary block-tensor arithmetic and prevent us from notational clutter.
    \item 
    The unsolved question from \cite[Sec.~6]{DS23} regarding the computational complexity of the two-parameter sums signature, taken at an arbitrary matrix composition, can now be reduced to a set of free generators introduced in this article. For example \Cref{thm:iso} provides the relation
    $$\begin{bmatrix}
           \w{1}&\varepsilon\\
           \varepsilon&\w{1}
       \end{bmatrix} = \frac12\begin{bmatrix}
           \w{1}
       \end{bmatrix}\qShuffle\begin{bmatrix}
           \w{1}
       \end{bmatrix} -
       \begin{bmatrix}
           \varepsilon&\w{1}\\
           \w{1}&\varepsilon
       \end{bmatrix}
       -
       \begin{bmatrix}
           \w{1}\\
           \w{1}
       \end{bmatrix}
        -
       \begin{bmatrix}
           \w{1}&
           \w{1}
       \end{bmatrix}
       -
       \frac12\begin{bmatrix}
           \w{1}\star
           \w{1}
       \end{bmatrix}
       $$
    which allows us to compute for every eventually zero $Z:\N^2\rightarrow\groundRing^d$ the matrix
    $${\left(\langle\SS_{0_2;t}(Z),\begin{bmatrix}
           \varepsilon&\w{1}\\
           \w{1}&\varepsilon
       \end{bmatrix}\rangle\right)}_{\substack{1\leq t_1\leq T_1\\1\leq t_2\leq T_2}}\in\groundRing^{T_1\times T_2}$$
     in linear time $\mathcal{O}(T_1T_2)$. 
     So far, the (iteratively) chained coefficients from \cite[Thm~4.5]{DS23} and the algebraic relations according  to quasi-shuffles do not yield linear complexity algorithms for all coefficients of the two-parameter signature.  
\emph{Can we understand which coefficients are not computable in linear time via chaining or even other approaches?}
    \item The isomorphisms due to Sections \ref{sec:Lyndongens}, \ref{sec:hoffman} and \ref{sec:Implicit} are all different from each other.  
    While the Lyndon isomorphisms of algebras  (\Cref{cor:isos_lynon}) do not respect the underlying copruduct $\Delta$ dual to diagonal concatenation, the way we define the isomorphism of Hopf algebras $\Phi$ inspired by Hoffman (\Cref{def:Phi}) seems independent of the explicit $\Delta$. \emph{Does $\Phi$ respect other coproducts such as deconcatenation along the antidiagonal? Can we make the isomorhism $\Phi$ unique up to certain conditions? Can we find a natural isomorphism from one-parameter to two-parameter shuffles which does not depend on $\Delta$ using theory related to cumulants?} 
    
\end{enumerate}

\subsection*{Acknowledgements}

LS has been supported by the trilateral French-Japanese-German research project \emph{EnhanceD Data stream Analysis with the Signature Method (EDDA)} of the French National Research Agency (ANR), together with the Japan Science and Technology Agency (JST), and the Deutsche Forschungsgemeinschaft (DFG).
NT is funded by the Deutsche Forschungsgemeinschaft (DFG, German Research Foundation) under Germany's Excellence Strategy – The Berlin Mathematics Research Center MATH+ (EXC-2046/1, project ID: 390685689).

Many of the initial discussions took place during the '23 workshop \emph{Structural Aspects of Signatures and Rough Paths} (as part of the project \emph{Signatures for Images}) at the Center of Advanced Study (CAS) in Oslo, Norway.

 Furthermore, LS would like to thank the \emph{Stochastic Algorithms and Nonparametric Statistics} research group at the WIAS Berlin for the invitation and hospitality from October '23 until March '24.

\newpage

\providecommand{\bysame}{\leavevmode\hbox to3em{\hrulefill}\thinspace}
\providecommand{\MR}{\relax\ifhmode\unskip\space\fi MR }
\providecommand{\MRhref}[2]{%
  \href{http://www.ams.org/mathscinet-getitem?mr=#1}{#2}
}
\providecommand{\href}[2]{#2}

\newpage

\printindex[general]

\end{document}